\documentclass[11pt,english]{article}
\usepackage[T1]{fontenc}
\usepackage{amsmath}

\DeclareMathOperator*{\argmin}{arg\,min}
\usepackage{amsthm}
\usepackage{amssymb}
\usepackage{mathtools}
\usepackage{leftindex}
\usepackage{multirow}
\usepackage{appendix}
\usepackage{breqn}
\usepackage{thmtools,thm-restate}
\usepackage{hyperref}
\usepackage{algorithm,algorithmic}

\usepackage[nobysame]{amsrefs}

\makeatletter
\theoremstyle{plain}
\newtheorem{theorem}{\protect\theoremname}[section]
\theoremstyle{plain}
\newtheorem{lemma}[theorem]{\protect\lemmaname}
\theoremstyle{plain}
\newtheorem{proposition}{\protect\propositionname}[section]
\theoremstyle{plain}

\theoremstyle{plain}

\theoremstyle{definition}
\newtheorem{definition}{\protect\definitionname}[section]
\theoremstyle{definition}

\newtheorem{assumption}{Assumption}[section]
\newtheorem{remark}{Remark}[section]
\ifx\proof\undefined
\newenvironment{proof}[1][\protect\proofname]{\par
	\normalfont\topsep6\p@\@plus6\p@\relax
	\trivlist
	\itemindent\parindent
	\item[\hskip\labelsep\scshape #1]\ignorespaces
}{%
	\endtrivlist\@endpefalse
}
\providecommand{\proofname}{Proof}
\fi

\usepackage{fullpage}

\usepackage{times}

\makeatother

\usepackage{babel}
\providecommand{\definitionname}{Definition}
\providecommand{\theoremname}{Theorem}
\providecommand{\lemmaname}{Lemma}
\providecommand{\propositionname}{Proposition}
\providecommand{\propertyname}{Property}
\providecommand{\examplename}{Example}
\providecommand{\corname}{Corollary}

\usepackage{pgfgantt}
\usepackage{bigints}
\usepackage{listings}
\usepackage{verbatim}
\usepackage{dsfont}
\usepackage{float}
\usepackage{subcaption}
\usepackage{fullpage}
\usepackage{authblk}
\numberwithin{equation}{section}
\newcommand{\N}{\ensuremath{\mathbb{N}}}

\definecolor{mygreen}{RGB}{28,172,0}
\definecolor{mylilas}{RGB}{170,55,241}
\usepackage{color}

\begin{document}
\title{The Value of Temporary Control for the M/M/1 Queue}
%
%

\author[1]{Odysseas Kanavetas}
\author[1]{Camiel M.P. Koopmans}
 \author[1]{Floske M. Spieksma}
 \affil[1]{ Mathematisch Instituut, Universiteit Leiden, the Netherlands}
 

\pagenumbering{gobble}
\maketitle              
\begin{abstract}
    In this article, a one-off option for temporary service rate control for the M/M/1 queue is considered.
     After taking this option, during a single exponentially distributed period, two service rates are available for use. Once service rate control is lost, the system continues with a fixed service rate $\mu$.  
     The objective is to minimise the sum of holding costs and service costs. 
    We approximate the expected total saved cost by taking the one-off option, depending on the starting state or starting distribution.
    Using the Value Iteration algorithm with $M$-uniform geometric recurrence, we present methods to approximate the expected total saved future cost, as well as the expected total saved discounted future cost. Furthermore, we obtain theoretical results on the structure of optimal policies and strong Blackwell optimality. The paper is concluded by numerically applying the methods to various instances of the model.
\end{abstract}

\newpage
\pagenumbering{arabic}
\section{Introduction}\label{Introduction}

Queueing systems have been well studied due to their practical relevance for (tele-)communication, traffic flow, computer sciences, and public services. The performance of these systems, especially for large scale systems, can be improved significantly through dynamic control. Service rate control is one of the implementations of dynamic control.

Service rate control serves as a method to fit the method/speed of service to the congestion level of a service system. The speed of service depends on a decision of staffing allocation or on a choice between service methods. The speed of service is directly affected, while the choice of service method is expected to have economic effects. These economic effects can result from direct higher costs or indirectly, for example, from varying service quality (see for example~\cites{gryna1999quality,rust1995return}).  

Furthermore, a cost for the amount of customers in the system is incurred through \textit{holding costs}. These costs can represent waiting cost of customers, customer dissatisfaction or the use of valuable resources to customers in the system like space and attention.

Service rate control of queueing systems has been studied in e.g. \cites{Lee,Adus,Ata,Badian,Kumar,Kitaev,Teghem}. 
We believe that one-off opportunities may arise in practice. For example, a rare opportunity arises to hire an intern, introducing a single period with service rate control through staffing allocation. Another example can be given through a one-off offer for a machine with a finite lifespan. One-off opportunities can arise through scarceness or through unique discounts. Existing service control literature typically assumes control remains indefinitely available, whereas this paper considers transient and exhaustible control opportunities. Consequently, the objective shifts from average cost optimisation to quantifying future cost savings induced by temporary intervention.
Furthermore, in an upcoming paper we plan to study a queueing system with control vacations, where, as soon as control is lost, the start of the next period of service rate control is scheduled at a cost. The cost saved by one control period is immediately related to the question whether repetitive control periods can reduce the average expected cost over the infinite time horizon. 

The exponential distribution (together with the hyperexponential and Weibull distribution) is a standard choice to take as the lifetime distribution in studies regarding machine failures, see for example~\cite{Groenevelt, Kapur,Gnedenko}. For this reason, we consider the exponential distribution to be appropriate to model the length of a temporary period with service rate control.

To decide whether it is desirable to take such a one-off decision of service rate control, we need to study the implications for the future cost. In systems where costs are of a financial nature, future costs can be discounted to match inflation. As such, one would need to know the total expected discounted cost that would be saved by taking the one-off decision.
When costs are of a social nature, for example through waiting times, this discount does not apply. In this case, the effect of the one-off decision is measured through the total saved cost.

This paper studies the expected total discounted, and non-discounted, saved future cost for single service systems with a single queue by introducing temporary service rate control. In short, the contributions of this paper are
\begin{itemize}
\item The novel inclusion of temporary service rate control.
\item Insight into the structure of optimal policies and Blackwell optimality~\cite{Blackwell}.
    \item Derivation of $\epsilon$-approximations of the expected \textbf{total} saved future (non-)discounted cost.
    \item Derivation of $\epsilon$-optimal policies.
\end{itemize}
Here, \textit{$\epsilon$-approximation} refers to an approximation with an (additive) approximation error of at most $\epsilon>0$.
Likewise, an \textit{$\epsilon$-optimal policy} is a policy that achieves a cost of at most $\epsilon>0$ more than the total future\\ (non-)discounted cost of an optimal policy (given a starting distribution). 
The specific strengths of the derived methods lie in their analysis, resulting in the approximation guarantees. In both the discounted and non-discounted case, we use a contraction approach to approximate the expected total future saved cost starting in a specific state. These approximate saved costs are used to approximate the expected total future saved cost starting in the stationary distribution.
Much of the methodology used in this paper originates from, or is influenced by, some seminal papers.

Crabill~\cite{mm1service} and Lippman~\cite{lippman} consider control of a variable service rate in a single M/M/1 queue and treat the model as a Markov Decision Process (MDP). In both papers, the minimisation of the sum of the service and holding costs is considered, both for the expected total discounted cost and the average expected cost over the infinite time horizon. Crabill first proves that stationary optimal policies exist and that they have a switchover/threshold structure. 
The paper by Lippman proves the same thing in a very similar setting, but is mainly influential through its first use of uniformisation for MDPs to prove the existence of optimal policies with certain monotonicity properties.
The method of uniformisation was later further developed by Serfozo~\cite{serfozo}. 
After uniformisation, Lippman 
applies Value Iteration (VI).
The first use of VI for MDPs that is known to us, is by Shapley~\cite{Shapley1953StochasticG}. The use of VI by Shapley has not been recognised as such until much later~\cite{Kallenberg2002}.
Papers that consider the convergence of VI for MDPs are those by Altman et al.~\cite{Altman}, by Borkar~\cite{Borkar}, by Sennott~\cite{sennott} and by Spieksma~\cite{spieksma1990geometrically}.
Weber and Stidham show monotonicity results for more general queueing models~\cite{Stidham,WeberStidham}.
Koole~\cite{koole,KooleLong} develops the more general framework of Event Based Dynamic Programming (EBDP) to deduce structural results for queueing systems through establishing monotonicity in VI.

Later research that is also relevant to service rate control models for an M/M/1 queue, includes the papers by Ertiningsih et al.~\cite{Ertiningsih}, George and Harrison~\cite{georgeharrison} and a paper by Blok and Spieksma~\cite{mdppractice}, which serves as a roadmap to prove structural properties of optimal policies.

\subsection*{Problem description}

 We study a single service station with a single queue. Customers arrive at the station according to a Poisson process with rate $\lambda$ and service times are taken to be exponentially distributed.
 We model this system as an MDP.
In our research, we are interested in the expected  total cost reduction resulting from one period of control.
The expected total future cost is finite when the cost is discounted, but infinite when the cost is not discounted. Furthermore, exercising service rate control will not reduce the average expected cost of the system, as service rate control is only temporary. 
Our goal is to compare the expected total (discounted) future cost benefit from optimal temporary service rate control.
Then, we will apply VI to the model to $\epsilon$-approximate the expected saved cost. In the non-discounted case, VI does not give an approximation guarantee. Therefore, we construct a contraction map that leads to $\epsilon$-approximations. Finally, we consider the expected saved cost, given that the system was stationary before enabling service rate control. As the future saved cost can quickly increase as a function of the initial number of customers, we need analytical results for approximation guarantees for this case.

We will now formalise the MDP.
The state of the system is defined by the queue length and whether we (still) have service rate control.
The state space is
$S=\{ (i,q) :  i\in \N_0 ,\  q\in \{0,1\}\}$, with the first component, $i$, denoting the number of customers in the system, and the second component, $q$, denoting the situation with control ($q=1$) or without control ($q=0$). 

\noindent The single server can have a temporary period of service rate control, which is lost after a time that is exp$(\beta)$ distributed.
 During this period, the server can choose to serve at any of the rates $\mu_1,\mu_2 \in \mathbb{R}_{>0}$ with $\mu_1<\mu_2$. The use of service rates $\mu_1,\mu_2$ comes at a cost of $c_{\mu_1}$ and $c_{\mu_2}$ per time unit, respectively.
 W.l.o.g. we assume that $0=c_{\mu_1}<c_{\mu_2}$.  When the server does not have service rate control, all future services are exponentially distributed with a fixed rate $\mu \in \{\mu_1,\mu_2\}$ with cost rate $c_\mu$.
 To ensure that the system is stable without service rate control, we assume $\mu>\lambda$.
 In this framework, there are two situations to consider, namely $\mu_1>\lambda$ and $\mu_2>\lambda \geq \mu_1$. In this second situation an unstable service rate is admissable and the system will stabilise when control is lost.  Due to the certain loss of control, we find that $S$ consists of one transient class $\{(i,1) :  i\in \mathbb{N}_0\}$ and one closed positive recurrent class $\{(i,0) :  i\in \mathbb{N}_0\}$.
The action space is given by $$\mathcal{A}(i,q)=\begin{cases}
    \{\mu_1,\mu_2\} \hspace{2cm} \text{if } (i,q)=(i,1)\in S,\\
    \{\mu\} \hspace{2.74cm} \text{if } (i,q)=(i,0)\in S,
\end{cases}$$ where the actions specify the chosen service rate. Note that this means that, also in the state $(0,0)$, service rate $\mu$ is chosen with its service rate cost, even though the server is not in service. This choice is taken for consistency and applies to human servers and machines for which the cost rate is not dependent on whether the server is in service or not. A change in this choice would affect cost and can affect optimal policies, but would not have large effects on structures, methods and proofs that we give in this paper.
Under the assumptions used,
 we may restrict ourselves to stationary deterministic policies~\cite{mdppractice}. Thus, we consider the stationary deterministic policy space over $S$ given by $\Phi$, and we will denote stationary deterministic policies by $\phi  $. 

In addition to the cost of the service rate used, the operational cost of the system also consists of \textit{holding costs} for the number of customers in the system. These holding costs do not depend on the taken action or state of control. Thus, the holding costs can be defined by function $h:S\rightarrow\mathbb{R}$, such that the cost charged per time unit for having $i$ customers in the system is  given by $h(i,0)=h(i,1):=h(i)$. 
We state the following assumption on the holding cost function.
\begin{assumption}\label{holdingassump0}
W.l.o.g., $h(0)=0$. Furthermore,
the holding cost function $h$ is non-negative, non-decreasing and convex in the customer amount, and unbounded. 
\end{assumption}

The question we seek to answer is: ``\textit{What is the expected saved cost resulting from the one-off deal to get a service rate control period of exponentially distributed length?}''. 

The future cost is discounted with rate $0\leq \alpha<\infty$ (such that $\alpha=0$ corresponds with the non-discounted case).
Thus, we are interested in finding the total expected discounted cost difference between two continuous time MDPs, one without taking the one-off deal for service rate control and one with taking the one-off deal for service rate control.
Let $\{Y(t)\}_{t\in \mathbb{R}_{\geq0}}$ be the process starting with control (i.e.,
 in the set $\{(i,1) :  i\in \mathbb{N}_0\}$) using policy $\phi\in \Phi$ and let $\{X(t)\}_{t\in \mathbb{R}_{\geq0}} $ be the process starting with fixed service rate $\mu$ (i.e., starting in the set   $\{(i,0) :  i\in \mathbb{N}_0\}$). We note that $\mu=\phi(i,0)$ for any $(i,0)\in S$.
  The initial number of customers in the system for both processes is drawn from the same probability distribution $p$ on $\mathbb{N}_0$. This distribution $p$ induces starting distributions $p^0$, $p^1$ such that the non-zero entries of $p^0,p^1$ are given by $p^1(i,1)=
p(i)=p^0(i,0)$ for all $i\in \mathbb{N}_0$.
 The corresponding cost rates are $c(i,0)=c_\mu+h(i)$, and $c^{\phi}(i,q)=c_{\phi(i,q)}+h(i) $, 
for processes $\{X(t)\}_{t\in \mathbb{R}_{\geq0}}$, $ \{Y(t)\}_{t\in \mathbb{R}_{\geq0}} $, respectively. Now, the expected total discounted cost difference between the two processes is given by

\begin{equation}\label{questionMathematicallyDiscounted}
    \mathbb{E}^\phi_{p^0 \times p^1} \Bigg[ \int_0^\infty e^{-\alpha t}\Big( c(X(t))- c(Y(t))\Big) dt\Bigg] .\end{equation}  

The starting distributions $p$ that we will consider are the Dirac measure $\delta_i, i\in \mathbb{N}_0,$ and the stationary distribution $\pi_\mu$. The expected total (discounted) saved cost we study in this paper is 
$$
\sup_{\phi \in \Phi}   \ \mathbb{E}^\phi_{p^0 \times p^1} \Bigg[ \int_0^\infty e^{-\alpha t}\Big( c(X(t))- c(Y(t))\Big) dt\Bigg].$$
We will derive $\epsilon$-approximations of all these values 
of interest.
Often, it is possible to explicitly determine an optimal policy.
For the case this is not possible, we will also show that we can determine $\epsilon$-optimal policies.

This paper is organised as follows.
First, we give the discrete time equivalent model of our MDP using uniformisation in Section~\ref{sect:discrmodel}.
In Section~\ref{sect:VIpol}, 
we show the implementation of Value Iteration (VI) for this model. 
In Section~\ref{sect:ConvVI}, we will prove the convergence of VI for the discounted model. 

In Section~\ref{sect:ConvVInonDisc} we prove that VI also converges
for the non-discounted case using $M$-uniform geometric recurrence. This implies Blackwell optimality for the model, but it does not directly result in approximation guarantees on the expected saved cost.

Next, we use Event Based Dynamic Programming~\cite{KooleLong} in Section~\ref{sect:ebdp} to show that the policies in every step of VI are of a threshold form.  This allows us to conclude that all policies found by VI are of a threshold form. We proceed to show that this threshold is monotonic w.r.t. the number of steps of VI and w.r.t. the discount factor. We identify the discounted cases in which the threshold is infinite. For all other cases we show that we can bound the threshold. As a result, we can conclude Blackwell optimality for optimal policies that result from VI.

The convergence of VI  allows us to approximate the expected saved costs 
starting in a specific state, 
as shown in Section~\ref{sect:ConvVIDisc}. 

In Section~\ref{sect:nondmod}, we use an equivalent related model with discount rate $\beta$ in order to $\epsilon$-approximate the total saved cost in any specific starting state (in Appendix~\ref{sect:MProofs} we use $M$-uniform geometric recurrence resulting in an alternative approximation method).

We proceed to approximate the expected saved cost starting in the stationary distribution  (both for the discounted and non-discounted model) in Section~\ref{sect:SavedVals}. Generally, we can explicitly determine optimal policies. When this is not possible, we can always give $\epsilon-$optimal policies.  
Finally, we present all the approximations for various parameters through computational examples in Section~\ref{sect:num}.

\section{The discrete time model }\label{sect:discrmodel}

As our MDP has a countable state space and bounded transition rates, a standard approach for its analysis is to consider the uniformised discrete time equivalent~\cite{lippman,serfozo}. Optimal stationary deterministic policies and the expected cost of both the continuous time original model and the discrete time uniformised variant coincide \cite{serfozo}. To proceed, we normalise the parameters accordingly (scaling time), such that \\
$\lambda +\mu_1+\mu_2+\beta=1$. In order to ensure the same expected long-run cost, we scale 
$c_{\mu_2}$ and 
$h$ by the same value. Furthermore, the continuous time discount factor $\alpha$ needs to be transformed to the discrete time discount factor $0\leq \frac{\alpha}{\alpha+1}<1
$. As we will only work with the discrete model in this paper, we will use $\alpha$ to denote the discrete time discount factor.

For policies $\phi\in \Phi$ such that $\phi(i,1)=\mu_a\in \mathcal{A}(i,1)$, the non-zero transition probabilities 
are given by matrix $P^\phi$ with elements

$$ p^{\phi}_{(i,1),(j,r)} = \begin{cases} \lambda, \   \  
&(j,r)=(i+1,1),\\
\beta,  \ \   &(j,r)=(i,0),\\
\mu_a \mathds{1}_{\{i>0\}},   \ \ &(j,r)=(i-1,1),\\
1-\lambda-\beta-\mu_a\mathds{1}_{\{i>0\}}, \   \ &  (j,r)=(i,1),\end{cases} $$

$$ 
   p^\phi_{(i,0),(j,r)}= \begin{cases} \lambda, \   \ &(j,r)=(i+1,0),\\
\mu\mathds{1}_{\{i>0\}}, \   \ &(j,r)=(i-1,0),\\
1-\lambda-\mu\mathds{1}_{\{i>0\}}, \   \ &(j,r)=(i,0).\end{cases}
 $$ 
The corresponding costs are $c^\phi(i,1)=c_{\mu_a}+h(i)$ and $c^\phi(i,0)=c_\mu+h(i)$.


For this discrete time MDP, we can denote the expected total  discounted saved cost (by the transformed discount factor $0\leq \alpha<1$) under any policy $\phi\in \Phi$ as
 
\begin{equation}\label{questionMathematicallyDiscounted2}
    s^\phi_{\alpha}(p):=\mathbb{E}^\phi_{p^0 \times p^1} \Bigg[ \sum_{n=0}^\infty (1-\alpha)^n\Big( c(X_n)- c(Y_n)\Big) \Bigg],\end{equation}
provided that this expression is well-defined. As we will show in this paper, $s^\phi_\alpha(p)$ exists for the considered probability distributions $p$ and $0\leq \alpha <1$. 
Hence, by Abel's theorem $\lim_{\alpha \downarrow 0}  s^\phi_{\alpha}(p) = s^\phi_0(p)$. 
A priori, the equivalence of Equations~(\ref{questionMathematicallyDiscounted}) and (\ref{questionMathematicallyDiscounted2}) does not need to hold for zero discount factors but follows from the derivation in Appendix~\ref{App:MuEE}.
For the non-discounted saved costs $s_0^\phi(p),s_0(p)$ we drop the subscript 0.

The optimal expected total discounted saved cost is then given by 
     $s_{\alpha}(p) = \sup_{\phi \in \Phi}  s^\phi_{\alpha}(p)$.
These expected saved costs are not affected by uniformisation \cite{serfozo}.
For notational convenience, we denote
$s_\alpha(i):=s_\alpha(\delta_i)$. 




\section{Value iteration and structures of optimal policies}
\label{sect:VIpol}

In this section, we first give the implementation of the Value Iteration algorithm for our problem and show that VI convergences.
For the considered model, we follow Blok and Spieksma~\cite{mdppractice},  where convergence of VI follows from certain drift conditions. 

The convergence of VI allows us to study the structure of optimal policies through propagation of these properties in VI. For this matter, EBDP~\cite{KooleLong} is a useful tool.

For the discounted model, the convergence directly leads to an $\epsilon$-approximation of the saved cost, when starting in specific starting states. This is not the case for the non-discounted model.

The Value Iteration algorithm for our problem with $0\leq \alpha < 1$ is given below (with the convention that $i^+=\max\{i,0\}$ for $i\in \mathbb{Z}$).

\begin{algorithm}[H]
\begin{algorithmic}[1]
\STATE Set $V_{\alpha,0}\equiv 0$ and $n=0$.
\STATE For each $(i,1)\in S$, 
\begin{align*}
     V_{\alpha,n+1}(i,1)  = 
      h(i) +(1-\alpha) \Big(\mu_1V_{\alpha,n}(i,1)+\mu_1 V_{\alpha,n}((i-1)^+,1)+\lambda  V_{\alpha,n}(i+1,0)+\beta V_{\alpha,n}(i,0)\Big)\\
      +  \min_{a\in \{\mu_1,\mu_2\}} \Big\{ c_a +(1-\alpha)\Big ( (a-\mu_1) V_{\alpha,n}((i-1)^+,1)+(\mu_2-a) V_{\alpha,n}(i,1)    \Big)\Big\}.
\end{align*}
$$\phi_{\alpha,n+1}(i,1)=\argmin_{a\in \{\mu_1,\mu_2\}} \Big\{ c_a +(1-\alpha)\Big ( (a-\mu_1) V_{\alpha,n}((i-1)^+,1)+(\mu_2-a) V_{\alpha,n}(i,1)    \Big)\Big\}. $$
For $(i,0)\in S$, 
\begin{align*}
     V_{\alpha,n+1}(i,0) &=  c_\mu+ h(i)\\
     &\quad+(1-\alpha)\Big(\mu V_{\alpha,n}((i-1)^+,0)+(1-\mu- \lambda) V_{\alpha,n}(i,0)
     +\lambda V_{\alpha,n}(i+1,0)\Big) .
\end{align*} 
\STATE Increment $n$ by 1 and return to step 2.
\end{algorithmic}
\caption{Value Iteration for the M/M/1 queue with one single 
control period 
}
\label{VIalpha}
\end{algorithm}
\noindent The calculation of $V_{\alpha,n+1}(i,1)$ 
is given in the most amenable form for our analysis. We add the convention that we drop the subscript 0 for $\phi_{0,n}$ and $V_{0,n}$.

Note that action $\phi_{\alpha,n+1}=\mu_1$  iff \begin{equation}
    \begin{split} \label{eq:convthresh}
           \mu_1(1-\alpha)(V_{\alpha,n}((i-1)^+,1)-V_{\alpha,n}(i,1)) &\leq c_{\mu_2}+ \mu_2(1-\alpha)(V_{\alpha,n}((i-1)^+,1)-V_{\alpha,n}(i,1)) \\
     \Longleftrightarrow V_{\alpha,n}(i,1)-V_{\alpha,n}((i-1)^+,1) &\leq \frac{c_{\mu_2}}{(1-\alpha)(\mu_2-\mu_1)}.
    \end{split}
\end{equation}

For the discounted variant of the MDP ($\alpha>0$) we study the \textit{expected total $\alpha$-discounted cost value function} $V^\phi_\alpha$ using policy $\phi\in \Phi$.
Starting in state $(i,q)\in S$ this gives value
\begin{equation*}
    V^\phi_{\alpha}(i,q)= \mathbb{E}_{(i,q)}^\phi \Big[ \sum_{n=0}^\infty (1-\alpha)^n c(Y_n) \Big].
\end{equation*}
The \textit{minimum expected total
$\alpha$-discounted cost} $V^\ast_\alpha$ in $\Phi$ is defined as $V^\ast_\alpha(i,q)=\inf_{\phi \in \Phi}V^\phi_\alpha(i,q)$.
We note that, in this discounted case, $s^\phi_\alpha(p)=
(p^0-p^1)^TV^\phi_\alpha$, treating $p^0,p^1 $ and $V^\phi_\alpha$ as (infinite) matrices. 
Convergence of $V_{\alpha,n}(i,0)$ to $V_{\alpha}(i,0)$ as $n\rightarrow \infty$ under Assumption~\ref{holdingassump0} has been proved by Lippman~\cite{lippman}.
We will now proceed to determine the general convergence of VI for the discounted and non-discounted models.

\subsection{Convergence of Value Iteration in the discounted case}\label{sect:ConvVI}

In this section, we first give the definitions and assumptions that imply convergence of VI.


\begin{definition}\label{DriftDef}\cite{mdppractice}
    For $\gamma>0$, the function $M: S\rightarrow (0,\infty)$ is called a $(\gamma,\Phi)$-drift function if $P^\phi M \leq \gamma M$ for all $\phi \in \Phi$, where `$\leq$' is the component-wise ordering.
\end{definition}

\begin{definition}\label{MboundDef}
    The Banach space of $M$-bounded functions on $S$ is denoted by $\ell^\infty(S,M)$. That means that $f\in \ell^\infty(S,M)$ if and only if $f:S\rightarrow \mathbb{R}$ and 
    $$ ||f||_M :=\sup_{(i,q)\in S} \frac{|f(i,q)|}{M(i,q)} <\infty.$$
\end{definition}

In addition to Assumption~\ref{holdingassump0}, we impose the following assumption on the holding costs that ensures the existence of policies with a finite average expected cost. Furthermore, this assumption will also help to analyse the discounted model.
\begin{assumption}\label{holdingassump}
  $$  \sum_{i=0}^\infty h(i)\cdot x^i <\infty, \ \ \ \ \text{ for all } 0<x<1.$$
\end{assumption}
Note that this assumption holds for any polynomial function, and more strictly, any function of order $\mathcal{O}(2^{i^\epsilon})$ with $0<\epsilon<1$, see \cite{Kaliski2011}. Also, $h(i)=o(\theta^i)$ for any $\theta>1$, since Assumption~\ref{holdingassump} implies that $ \lim_{i\rightarrow \infty} h(i) \cdot x^i=0\  \forall x\in(0,1)$.
Assumption~\ref{holdingassump} is sufficient to guarantee the average expected cost for the model to be finite. This follows as the average cost for the positive recurrent class $\{(i,0) :  i\in \mathbb{N}_0\}$ is equal to $c_\mu+(1-
\lambda/\mu)\sum_{i=0}^\infty h(i)\cdot (\lambda/\mu
)^i$. 

Convergence of VI for the discounted model is guaranteed by the following assumption (cf. \cite{lippman,Wessels}). 

\begin{assumption}\label{assump2alpha}
\leavevmode
\\
     There exists a constant $1<\gamma<1/(1-\alpha)$ and a $(\gamma,\Phi)$-drift function $M$ such that
    $ \sup_{\phi}||c^\phi||_M<\infty$.

\end{assumption}

For any $1<\gamma<1/(1-\alpha)$ we can define $M$ by $M(i)=M(i,0)=M(i,1)=\gamma^i$. 
Then, for all $\phi\in \Phi$, it is easily checked that
$P^\phi M(i,q)\leq \gamma M(i,q)$. 
As a result, $M$ is a $(\gamma,\Phi)$-drift function. 
Next, we note that $|c^\phi(i,1)|,|c^\phi(i,0)| \leq c_{\mu_2}+h(i)$. As $h(i)=o(\gamma^i)$
by Assumption~\ref{holdingassump}, we find that 
$$\sup_\phi ||c^\phi||_M =\sup_{(i,q),\phi} \frac{|c^\phi(i,q)|}{M(i,q)} <\infty.$$


Thus, VI converges such that $\lim_{n\rightarrow \infty} V_{\alpha,n}(i,q)=V_\alpha(i,q)$ and $V_\alpha(i,q)= \min_{\phi \in \Phi}V_\alpha^\phi(i,q)$, for $(i,q)\in S$. Furthermore, $V_\alpha$ is the unique $M$-bounded solution to the Discrete Discounted Optimality Equation (DDOE)
\begin{equation}\label{DDAO}
     u(x) =\inf_{\phi \in \Phi} \left\{ c^\phi(x) + (1-\alpha)\sum_{y\in S}p^\phi_{x,y}u(y) \right\}.
\end{equation}

\subsection{Convergence for the non-discounted model}\label{sect:ConvVInonDisc}

 The  
 Discrete time Average cost Optimality Equation (DAOE) takes a central role in the study of non-discounted MDPs over the infinite time horizon.
 We will show that VI converges to a solution of the DAOE
 given by
\begin{equation}\label{DAOE}
   g+H(x) =\min_{\phi \in \Phi} \left\{ c^\phi(x) + \sum_{y\in S}p^\phi_{x,y}H(y) \right\}  , \ \ \ x\in S,
\end{equation}
with $g\in \mathbb{R}_{+}$ the average expected cost and $\ H:S\rightarrow \mathbb{R}$ a \textit{relative value} vector respectively.

Note that the limiting distribution of our model is the unique stationary distribution of the M/M/1 queue with rate $\mu$. This follows as the service rate control is lost after a finite expected time
after which the process behaves as an M/M/1 queue. Thus, the average expected cost of the model is equal to the stationary expected cost of the M/M/1 queue.
The stationary distribution $\pi^0_\mu$ of the number of customers in the system 
is given by 
$\pi^0_\mu(i,0)= (1-\rho_\mu)\rho_\mu^i, \pi^0_\mu(i,1)=0$, where $\rho_\mu=
\lambda/\mu, i\in \mathbb{N}_0$. 
The average expected cost $g_\mu$ per time unit is given by 
\begin{equation} \label{eq:gf}
    g_\mu= c_\mu+\sum_{i=0}^\infty h(i)\cdot \pi^0_\mu(i,0) .\end{equation}


 For the fixed rate M/M/1 queue we note that the DAOE is given by tuples $(g_\mu,H_{\mu})$ with $g_\mu\in \mathbb{R}_{+},$ \\
 $H_{\mu}:\mathbb{N}_0\rightarrow \mathbb{R}$, such that
\begin{equation}\label{CAOEf}
    g_\mu +H_{\mu}(i) =  c_\mu+h(i) + \mu H_{\mu}((i-1)^+)+(1-\lambda-\mu)H_{\mu}(i)  + \lambda H_{\mu}(i+1), \ \quad i\in \mathbb{N}_0.
\end{equation}

The strong link of our model to this DAOE is that $g=g_\mu,$ $H(i,0)=H_{\mu}(i)$. 

We can 
derive these values exactly from the optimality equation.
 First, w.l.o.g., $H_{\mu}(0)=0$, then $H_{\mu}(1)=\frac{g_\mu-c_\mu-h(0)}{\lambda}=\frac{g_\mu-c_\mu}{\lambda}$, all the other values of $H_{\mu}(i)$ can iteratively be determined as \begin{equation}\label{Hiter} H_{\mu}(i+1)= H_{\mu}(i)+\frac{g_\mu-c_\mu-h(i)+\mu(H_{\mu}(i)-H_{\mu}(i-1))}{\lambda}. \end{equation}

As this M/M/1 queue is a special case of the service rate control model studied in \cite{lippman}, 
the value function is
non-decreasing and convex. Hence, the relative value function $H_{\mu}$ is non-decreasing and convex for the M/M/1 queue with convex holding cost function $h$. 

In order to conclude convergence of VI to determine a solution of the DAOE that includes the transient states $(i,1)\in S$, we use the following theorem from Spieksma~\cite{spieksma1990geometrically} and Altman et al.~\cite{Altman} as taken from \cite[Thm 5.8]{mdppractice}.

\begin{restatable}{theorem}{convergenceVIM}\label{convergenceVIM} 
    Suppose 
that the MDP is $M$-uniformly geometricly recurrent, i.e., there exist
a function \\
$M :  S \rightarrow [1,\infty)$, a finite set $D\subset S$ and a constant  $\eta<1$, such that
\begin{equation}\label{eq:M}
    \sum_{y\notin D } p^\phi_{xy}M(y)\leq \eta M(x), \quad x\in S, \phi\in \Phi. 
\end{equation} 
Suppose further that $\sup_{\phi \in \Phi} ||c^\phi||_M<\infty$ and that, under any policy $\phi\in \Phi$ the resulting Markov process is aperiodic and has one closed
class, plus possibly transient states.
Let $0 \in S$ be a selected state. Then, there exists a unique solution pair  $(g^\ast,H^\ast)$  with
$H^\ast \in l^\infty(S,M) $, and $H^\ast(0) = 0$, to the DAOE (Equation~(\ref{DAOE})) with the properties that 
\begin{enumerate}
    \item $g^\ast$   is the minimum average expected cost (in $\Phi$),
    \item any $\phi^\ast\in \Phi$ with $\phi^\ast \in \argmin_{\phi \in \Phi} \left\{ c^\phi(x) + \sum_{y\in S}p^\phi_{x,y}H(y) \right\}$ is (average cost) optimal in $\Phi$ and
    \item there exists $x^\ast\in D$ with $H^\ast(x^\ast) = \inf_xH^\ast(x)$ .
\end{enumerate}
Furthermore, average cost VI converges, that is, $lim_{n\rightarrow \infty}(V_n-V_n(0)\boldsymbol{e}) = H^\ast$, and
any limit point of the sequence $ \{\phi_n\}_n$ is average cost optimal and a minimiser of the
DAOE (\ref{DAOE}) with solution tuple $ (g^\ast,H^\ast)$.
\end{restatable}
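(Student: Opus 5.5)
Since the statement is quoted from \cite[Thm 5.8]{mdppractice}, I would prove it along the standard route of Spieksma~\cite{spieksma1990geometrically} and Altman et al.~\cite{Altman}: reduce the average cost problem to a family of discounted problems that are uniformly controlled in the weighted norm $\|\cdot\|_M$, and then pass to the limit.

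\textbf{Step 1: uniform bounds from the recurrence condition.} Write ${}_DP^\phi$ for the taboo matrix $({}_DP^\phi)_{xy}=p^\phi_{xy}\mathds{1}_{\{y\notin D\}}$. Condition (\ref{eq:M}) says $\|{}_DP^\phi\|_M\le\eta<1$ for every policy. Iterating, the expected $M$-weighted cost accumulated before the first entrance into $D$ is bounded, uniformly in $\phi$ and in the starting state $x$, by $\sup_\phi\|c^\phi\|_M\,M(x)/(1-\eta)$. The expected hitting time of $D$ is bounded by $M(x)/(1-\eta)$ in the same way.

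\textbf{Step 2: ergodicity and uniform bounds on discounted differences.} Because $D$ is finite, the policy space is compact, and each chain is aperiodic with a single closed class, the chains are $M$-uniformly geometrically ergodic. That is, $\|(P^\phi)^n-\Pi^\phi\|_M\le K\beta_0^n$ with $K$ and $\beta_0<1$ independent of $\phi$. This is the step I expect to be the main obstacle: extracting uniformity over $\Phi$ requires a coupling or continuity argument via the finite set $D$. I would first try the splitting argument of \cite{spieksma1990geometrically}, using continuity of $\phi\mapsto P^\phi$ on the compact product space $\Phi$. From the ergodic bound, the relative discounted values $V_\alpha-V_\alpha(0)\boldsymbol{e}$ are bounded in $\|\cdot\|_M$ uniformly in $\alpha$, and $\alpha V_\alpha(0)$ stays bounded.

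\textbf{Step 3: the optimality equation.} Take a subsequence $\alpha\downarrow0$ along which $\alpha V_\alpha(0)\to g^\ast$ and, by a diagonal argument, $V_\alpha-V_\alpha(0)\boldsymbol{e}\to H^\ast$ pointwise. Domination by $M$ together with $P^\phi M\le$ const$\cdot M$ (a consequence of (\ref{eq:M}) and $D$ finite) justifies passing the limit through the DDOE (\ref{DDAO}), so $(g^\ast,H^\ast)$ solves the DAOE (\ref{DAOE}).

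\textbf{Step 4: properties 1 and 2.} Iterate the DAOE under any $\phi$ and under a minimiser $\phi^\ast$. Then divide by $n$, using $\|(P^\phi)^nH^\ast\|_M$ bounded, which follows from the ergodic bound. This gives $g^\ast\le g^\phi$, with equality for $\phi^\ast$, which proves properties 1 and 2.

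\textbf{Step 5: uniqueness and property 3.} For uniqueness, take two $M$-bounded solutions. Subtracting the equations under a common minimiser and iterating gives $H_1-H_2=\Pi(H_1-H_2)$, a constant, and this constant is fixed by $H(0)=0$. For property 3, the function $H^\ast$ is bounded from below off $D$ by a renewal argument, since from any state $D$ is reached with finite expected cost; hence the infimum of $H^\ast$ is attained in $D$.

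\textbf{Step 6: convergence of value iteration.} Define the span-type contraction on $V_n-V_n(0)\boldsymbol{e}$ through the taboo decomposition. Uniform geometric ergodicity gives $\|(V_n-V_n(0)\boldsymbol{e})-H^\ast\|_M\to0$ geometrically. Limit points of $\{\phi_n\}$ then minimise the DAOE by continuity, and so are optimal by Step 4.
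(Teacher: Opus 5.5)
The paper does not prove this statement. It quotes it from Blok and Spieksma (Thm~5.8, going back to Spieksma and Altman et al.). Your Steps 1--4 are the expected route through that literature: uniform recurrence $\Rightarrow$ uniform $M$-geometric ergodicity $\Rightarrow$ vanishing discount plus a Poisson-equation comparison. The problems are in Steps 5 and 6.

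\textbf{Item 3 is false as stated, so no argument can prove it.} Your inference ``bounded below off $D$, hence the infimum is attained in $D$'' is a non sequitur. Even the lower bound fails: the first-passage identity under $\phi^\ast$ only gives $H^\ast(x)\ge\min_D H^\ast-g^\ast\,\mathbb{E}^{\phi^\ast}_x\tau_D$, and $\mathbb{E}_x\tau_D$ may grow like $M(x)$.

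For a concrete counterexample, take $S=\{0,1\}$ with one action, $p_{01}=1$, $p_{10}=p_{11}=\tfrac12$, $c(0)=10$, $c(1)=0$, $D=\{0\}$, $M(0)=2$, $M(1)=1$, $\eta=\tfrac12$.
\begin{itemize}
\item Condition (\ref{eq:M}) holds, with equality in both states.
\item The chain is irreducible and aperiodic.
\item The DAOE gives $g^\ast=10/3$ and $H^\ast=(0,-20/3)$, so the infimum is attained only outside $D$.
\end{itemize}
The paper's own model is another instance. With $D=\{(0,0)\}$ and $\mu=\mu_2$, one has $H^\ast(0,1)=-s(0)<0=H^\ast(0,0)$, because using $\mu_1$ in $(0,1)$ saves $c_{\mu_2}$ without changing the dynamics. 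Item 3 needs an extra hypothesis, for example $c^\phi(x)\ge g^\ast$ for all $x\notin D$ and all $\phi$. Under that hypothesis the first-passage identity does give $H^\ast\ge\min_D H^\ast$.

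\textbf{Uniqueness is incomplete.} There is no common minimiser a priori. Both solutions have $g=g^\ast$ by Step 4. Let $\phi_i$ minimise for $H_i$, $u=H_1-H_2$, and $\pi^\phi$ the stationary distribution of $P^\phi$. Then you only get $P^{\phi_1}u\le u\le P^{\phi_2}u$, hence $\pi^{\phi_1}(u)\le u\le\pi^{\phi_2}(u)=:m$. One way to close this:
\begin{itemize}
\item The set $A=\{u=m\}$ contains the closed class of $\phi_2$ and is closed under $\phi_2$.
\item Equality in $u\le P^{\phi_2}u$ on $A$ means $\phi_2$ also attains the minimum in the equation for $H_1$ there.
\item Hence $\psi:=\phi_2$ on $A$ and $\psi:=\phi_1$ off $A$ is a minimiser for $H_1$, and its closed class lies in $A$.
\item Then $u\ge P^\psi u$ yields $u\ge\pi^\psi(u)=m$, so $u$ is constant.
\end{itemize}
When $D=\{0\}$, as in the paper's application, uniqueness is immediate: $u\mapsto\min_\phi\{c^\phi-g^\ast\mathbf{1}+{}_{0}P^\phi u\}$ is an $\eta$-contraction on $\ell^\infty(S,M)$.

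\textbf{Value iteration does not follow from Step 2.} Let $z_n=V_n-ng^\ast$ and let $\psi_n$ be the minimiser at stage $n+1$. Then $P^{\psi_n}(z_n-H^\ast)\le z_{n+1}-H^\ast\le P^{\phi^\ast}(z_n-H^\ast)$. The lower bound therefore involves products $P^{\psi_{n-1}}\cdots P^{\psi_0}$ of different decision rules. Uniform ergodicity of the stationary chains does not control such products. What is needed is uniform ergodicity of these non-homogeneous products, which uses the product structure of $\Phi$ together with aperiodicity of every stationary policy. In the finite-state case this is a theorem of Anthonisse and Tijms. This, not Step 2, is the real core of the VI claim. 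A taboo contraction of the type used in Lemma~\ref{HAppr2} is not standard VI, since it presupposes knowledge of $g^\ast$ and $D=\{0\}$.
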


We can verify that Theorem~\ref{convergenceVIM} holds for our model with a certain choice of $M$ and selected state $0=(0,0)$.
For $\zeta \in \mathbb{R}_{\geq 1}$ we define $M$ by $$M(i,0)=\frac{1}{2} \cdot \zeta^i, M(i,1)= \zeta^i,$$ and take $D=\{(0,0)\}$.

For $\mu_1>\lambda$, Equation~(\ref{eq:M}) holds for $1<\zeta < \frac{\mu_1}{\lambda}\wedge \big(1+\frac{\beta}{\lambda}\big),$ with 
$$\eta= 1-\lambda+\lambda\zeta+\max\Big\{-\beta, -(1-\frac{1}{\zeta})\mu, -(1-\frac{1}{\zeta})\mu_1 -\frac{1}{2}\beta \Big\}  <1.$$

For $\mu_1\leq \lambda < \mu_2$ we have to restrict $\zeta$ to $$1<\zeta< \frac{\lambda+\beta}{\lambda}\wedge \frac{\mu}{\lambda} \wedge\frac{(\lambda+\frac{1}{2}\beta+\mu_1)+\sqrt{(\lambda+\frac{1}{2}\beta+\mu_1)^2-4\lambda \mu_1}}{2\lambda}$$ with resulting $$
        \eta = \max\Big\{1-\lambda+\lambda\zeta+\max\Big\{-\beta, -(1-\frac{1}{\zeta})\mu, -(1-\frac{1}{\zeta})\mu_1 -\frac{1}{2}\beta ,\frac{\mu_1}{\zeta}+(1-\lambda-\frac{1}{2}\beta-\mu_1)+\lambda \zeta\Big\} <1.$$
In Appendix~\ref{app:Mrec} we show that Equation~(\ref{eq:M}) holds for these choices of $M$, $\eta$ and $D$. 
Furthermore, we require $\zeta <1/(1-\beta)$ for Lemma~\ref{HAppr} to hold. This requirement is not needed for alternative $\epsilon$-approximations of $H^\ast(0,1),H^\ast(1,1),\dots,H^\ast(k,1)$ given in Appendix~\ref{sect:MProofs}.

Note that $$\sup_\phi ||c^\phi||_M =\sup_{(i,q),\phi} \frac{|c^\phi(i,q)|}{M(i,q)} \leq  \sup_{(i,q)} \frac{|c_{\mu_2}+h(i)|}{M(i,q)}\leq c_{\mu_2}+\sup_i \frac{2h(i)}{\zeta^i}$$ is finite by Assumption~\ref{holdingassump}.


Finally, the Markov process under any policy $\phi\in \Phi$ is indeed aperiodic and has one closed
class $\{(i,0) :  i\in \mathbb{N}_0\}$, plus transient states $\{(i,1) :  i\in \mathbb{N}_0\}$.
Thus, we conclude that Theorem~\ref{convergenceVIM} is applicable to our problem.


\subsection{The structure of policies in Value Iteration }
\label{sect:ebdp}

Our aim is to show that the optimal policy has a threshold structure on $\{(i,1)\ : \ i\in \mathbb{N}_0\}$. To this end, it is sufficient to show convexity of $V_{\alpha,n}(i,0)$ and $V_{\alpha,n}(i,1)$ in $i$ using the methodology of EBDP by Koole~\cite{KooleLong}.
Service rate control is not directly included in \cite{KooleLong}, but combining the dynamics of controlled arrivals and of departures allows to deduce the necessary structural results.

\begin{proposition} \label{increasingconvexvalues}
    Let $\alpha\in [0,1), V_{\alpha,0}\equiv 0$ and $n\in \mathbb{N}_0$.
    Then, $V_{\alpha,n}(i,0)$ and $V_{\alpha,n}(i,1)$ are non-decreasing and convex in $i\in \mathbb{N}_0$. 
    The policy $\phi_{\alpha,n}\in \Phi$ 
    is of threshold form. I.e., there exists a threshold\\
    $i_{\alpha,n}\in \mathbb{N}_0\cup\{\infty\}$, such that $$\phi_{\alpha,n}(i,1) =\begin{cases}
        \mu_1 \ \ \ \ \ \text{ if } \ \ i\leq i_{\alpha,n},\\
         \mu_2 \ \ \ \ \ \text{ if } \ \ i> i_{\alpha,n}.
    \end{cases}$$
\end{proposition}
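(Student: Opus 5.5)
The proof goes by induction on $n$, in the style of Event Based Dynamic Programming~\cite{KooleLong}. I write the step $V_{\alpha,n}\mapsto V_{\alpha,n+1}$ of Algorithm~\ref{VIalpha} as a nonnegative linear combination of elementary operators. Each operator is shown to preserve the class of functions $f:\mathbb{N}_0\to\mathbb{R}$ that are non-decreasing and convex. I apply this to $V_{\alpha,n}(\cdot,0)$ and $V_{\alpha,n}(\cdot,1)$ separately.

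The base case $V_{\alpha,0}\equiv 0$ is trivial. Assume both sections of $V_{\alpha,n}$ are non-decreasing and convex in $i$. The operators I use are:
\begin{itemize}
\item the arrival operator $f\mapsto f(i+1)$;
\item the uncontrolled departure operator $f\mapsto f((i-1)^+)$;
\item the identity;
\item adding the non-decreasing convex function $h$ (Assumption~\ref{holdingassump0});
\item multiplication by $(1-\alpha)>0$ together with positive coefficients.
\end{itemize}
The first two preserve monotonicity and convexity, which is standard (cf.\ Lippman~\cite{lippman} and \cite{KooleLong}). The only point needing care for the departure operator is the boundary: $f((i-1)^+)$ has increments $0,\ f(1)-f(0),\ f(2)-f(1),\dots$, and these are non-decreasing because $f$ is non-decreasing. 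Linear combinations with nonnegative coefficients preserve both properties, so $V_{\alpha,n+1}(\cdot,0)$ is non-decreasing and convex.

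For the $(i,1)$-component the terms $h(i)$, $\mu_1V_{\alpha,n}(i,1)$, $\mu_1V_{\alpha,n}((i-1)^+,1)$, the $\lambda$-term and $\beta V_{\alpha,n}(i,0)$ are handled as above. For the $\lambda$-term I read the transition matrix $P^\phi$ as sending $(i,1)$ to $(i+1,1)$; the argument is unaffected if it is $(i+1,0)$, since both sections are covered by the induction hypothesis.

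The new ingredient is the controlled departure term. With $f=V_{\alpha,n}(\cdot,1)$ and $\kappa=c_{\mu_2}/((1-\alpha)(\mu_2-\mu_1))$, a short rearrangement shows that it equals $(1-\alpha)(\mu_2-\mu_1)\,g(i)$, where
\[
g(i)=\min\{f(i),\ \kappa+f((i-1)^+)\}.
\]
So it remains to show that $g$ is non-decreasing and convex; this is the step I expect to be the main obstacle.

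Monotonicity is immediate, since both arguments of the minimum are non-decreasing in $i$. For convexity I use $g(i)=f((i-1)^+)+\min\{\Delta f(i),\kappa\}$, where $\Delta f(i)=f(i)-f((i-1)^+)$ is non-decreasing in $i$ by convexity (with $\Delta f(0)=0$). Hence the minimiser switches exactly once, at some $i^\ast$, from the first argument $f(i)$ to the second argument $\kappa+f(i-1)$. Consequently $g$ equals $f$ on $\{0,\dots,i^\ast\}$ and equals $\kappa+f(\cdot-1)$ beyond $i^\ast$. Each piece is convex, so it suffices to check the increment inequality $\Delta g(i^\ast)\le\Delta g(i^\ast+1)$ at the junction. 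Its left side is $\Delta g(i^\ast)=\Delta f(i^\ast)$ and its right side is $\Delta g(i^\ast+1)=\kappa+f(i^\ast)-f(i^\ast)=\kappa$, so the check reads $\Delta f(i^\ast)\le\kappa$. This holds by the definition of $i^\ast$, as the last index where $\Delta f\le\kappa$. Alternatively, I would prove $2g(i+1)\le g(i)+g(i+2)$ by picking minimisers at $i$ and $i+2$ and bounding $g(i+1)$ by the corresponding mixed terms, as in Koole's proof for controlled operators. Together with the boundary case $g(0)=f(0)$, this closes the induction.

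Finally, the threshold form follows from the characterisation (\ref{eq:convthresh}). The policy $\phi_{\alpha,n+1}(i,1)=\mu_1$ is chosen iff $V_{\alpha,n}(i,1)-V_{\alpha,n}((i-1)^+,1)\le\kappa$. The left-hand side is non-decreasing in $i$ by the convexity just established, so the set of $i$ where $\mu_1$ is chosen is an initial segment $\{0,\dots,i_{\alpha,n+1}\}$, with $i_{\alpha,n+1}\in\mathbb{N}_0\cup\{\infty\}$. This segment is nonempty, since the increment at $i=0$ is $0\le\kappa$.
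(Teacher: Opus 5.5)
Your proof is correct and has the same overall structure as the paper's. Both argue by induction on $n$, write the VI step as nonnegative combinations of operators that preserve non-decreasing convex functions, and read off the threshold from Equation~(\ref{eq:convthresh}). The difference is in how the controlled service term is handled.

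The paper writes that term as $T_D(T_{CA}V_{\alpha,n}^1)$, where $T_{CA}f(i)=\min\{\kappa+f(i),f(i+1)\}$ is Koole's controlled-arrival operator. It then cites Koole's results that $T_{CA}$ and $T_D$ preserve non-decreasing convex functions. You instead prove directly that $g(i)=\min\{f(i),\kappa+f((i-1)^+)\}$ is non-decreasing and convex. Your argument uses the fact that the non-decreasing increments $\Delta f$ cross the level $\kappa$ at most once.

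The paper's route is shorter and reuses an existing operator library. Yours is self-contained and, notably, exact at the boundary. The paper's identity $T_D(T_{CA}f)(i)=\min\{f(i),\kappa+f((i-1)^+)\}$ fails at $i=0$. There the left side equals $\min\{\kappa+f(0),f(1)\}$, which is strictly larger than the true value $f(0)$ unless $f(1)=f(0)$. Your explicit use of $g(0)=f(0)$ therefore covers a case the paper's operator representation gets wrong.

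One small repair is needed in your junction check. The increments of $g$ are $\Delta f(1),\dots,\Delta f(i^\ast),\kappa,\Delta f(i^\ast+1),\Delta f(i^\ast+2),\dots$, so two inequalities are required: $\Delta f(i^\ast)\le\kappa$ and $\kappa\le\Delta f(i^\ast+1)$. You state only the first. The second is the one that actually matters when $i^\ast=0$, since then the first is vacuous. It follows at once from $i^\ast$ being the last index with $\Delta f\le\kappa$, so you should add it explicitly.
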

\begin{proof}
For this proof, we take $g_1,g_2,\hdots,g_5:\mathbb{N}_0\rightarrow \mathbb{R}$ to be general functions upon which the following operators act.
 For notational convenience,
 denote 
 $V_{\alpha,n}^q(i)=V_{\alpha,n}(i,q)$ for $q\in \{0,1\},i\in \mathbb{N}_0$.
Define the following operators

    \begin{itemize}
        \item $T_{\text{disc}}\ g_1(i)=h(i) +(1-\alpha)g_1(i)$.
        \item $T^\mu_{\text{disc}}\ g_1(i)=c_\mu+ h(i) +(1-\alpha)g_1(i)$.
        \item $T_A\ g_1(i)=g_1(i+1). $
        \item $T_D\ g_1(i)= g_1((i-1)^+)$.
        \item $ T_{CA}\ g_1(i) = \min\Big\{ \frac{c_{\mu_2}}{(1-\alpha)(\mu_2-\mu_1)} + g_1(i),g_1(i+1)\Big\}$.
        \item $ T_{\text{unif}}^0(g_1,g_2,g_3)(i)=\lambda g_1(i) + (1-\lambda-\mu) g_2(i)+\mu g_3(i).$ 
           \item $ T_{\text{unif}}^1(g_1,g_2,g_3,g_4,g_5)(i)=\lambda g_1(i) + \beta g_2(i)+\mu_1 g_3(i)+\mu_1 g_4(i) +(\mu_2-\mu_1)g_5(i) $.
    \end{itemize}
    Note that $T_D (T_{CA}\ g(i))=\min\Big\{ g(i),\frac{c2}{(1-\alpha)(\mu_2-\mu_1)} +g((i-1)^+)\Big\}$.   
This leads to
\begin{equation*}
\begin{split}
V_{\alpha,n+1}(i,0)
&=c_\mu+h(i)\\
&\quad +(1-\alpha) \Big(\lambda  V_{\alpha,n}(i+1,0)
     + (1-\lambda-\mu)V_{\alpha,n}(i,0)+\mu V_{\alpha,n}((i-1)^+,0) \Big)\\
     &=T^\mu_{\text{disc}}( T_{\text{unif}}^0 (T_A V_{\alpha,n}^0,V_{\alpha,n}^0,T_D V_{\alpha,n}^0 ) )(i).
\end{split}
\end{equation*}
Furthermore, as $\lambda+\beta+\mu_1+\mu_2=1$,
 \begin{align*}
     V_{\alpha,n+1}(i,1)&= 
     h(i) +(1-\alpha) \Big(\mu_1V_{\alpha,n}(i,1)+\mu_1 V_{\alpha,n}((i-1)^+,1)+\lambda  V_{\alpha,n}(i+1,1)+\beta V_{\alpha,n}(i,0)\Big)\\
      &\hspace{2.3cm}+ \min_{a\in \{\mu_1,\mu_2\}} \Big\{ c_a +(1-\alpha)\Big ( (a-\mu_1) V_{\alpha,n}((i-1)^+,1)+(\mu_2-a) V_{\alpha,n}(i,1)    \Big)\Big\}    \\
     &=h(i) +(1-\alpha) \Big(\lambda  V_{\alpha,n}(i+1,1)+\beta V_{\alpha,n}(i,0)
     + \mu_1V_{\alpha,n}(i,1)+\mu_1 V_{\alpha,n}((i-1)^+,1)\\
     & \hspace{3.3cm} +(\mu_2-\mu_1)\min\Big\{ V_{\alpha,n}(i,1), \frac{c_{\mu_2}}{(1-\alpha)(\mu_2-\mu_1)} + V_{\alpha,n}((i-1)^+,1)\Big\} \Big) \\
     &=T_{\text{disc}}( T_{\text{unif}}^1 (T_A V_{\alpha,n}^1,V_{\alpha,n}^0, V_{\alpha,n}^1 ,T_D V_{\alpha,n}^1, T_D (T_{CA} V_{\alpha,n}^1 ) ) )(i).
\end{align*}

Observe that $V_{\alpha,n+1}^0$ is the result of a combination of operators acting on $ V_{\alpha,n}^0$ but $V_{\alpha,n+1}^1$ is the result of a combination of operators acting on both $ V_{\alpha,n}^0$ and $  V_{\alpha,n}^1$ .
According to Koole~\cite{KooleLong}, for non-decreasing and convex $h$, all operators $T_{\text{disc}}$ $,T^ f_{\text{disc}}$ $,T_{\text{unif}}^0$, $T_D, $ $T_{CA},$ $T_A$, and $T_{\text{unif}}^1$, map non-decreasing, convex functions to other non-decreasing, convex functions. 
As a result, both $V_{\alpha,n}(i,0)$ and $V_{\alpha,n}(i,1)$ are non-decreasing and convex for any $n\in \mathbb{N}_0$ (and $0\leq \alpha<1$) by induction. 

Threshold optimality now follows directly from Equation~(\ref{eq:convthresh}). This shows the actions taken by $\phi_{\alpha,n}$ for states $(i,1)$
are of a threshold form in $i$.

\end{proof}

Next, we derive monotonicity properties of the thresholds $i_{\alpha,n}$ by analysing properties of $V_{\alpha,n}$.


\begin{proposition} \label{decreasingthreshold}
    Let  $\alpha\in[0,1),V_{\alpha,0}\equiv 0$. 
    For $i\in \mathbb{N}_{\geq 1},q\in \{0,1\}$,
    \begin{itemize}
        \item  $V_{\alpha,n}(i,q)-V_{\alpha,n}(i-1,q)$ is non-decreasing in $n\in \mathbb{N}_0$,
        \item  $V_{\alpha,n}(i,q)-V_{\alpha,n}(i-1,q)$ is non-increasing in $\alpha$.
    \end{itemize}
     As a result, the thresholds $i_{\alpha,n}$ are non-increasing in $n$ and non-decreasing in $\alpha$.
\end{proposition}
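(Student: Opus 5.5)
The plan is to argue by induction on $n$, working with the difference functions $\Delta_{\alpha,n}(i,q):=V_{\alpha,n}(i,q)-V_{\alpha,n}(i-1,q)$ for $i\geq 1$. These are non-negative and non-decreasing in $i$ by Proposition~\ref{increasingconvexvalues}. Throughout, write $K_\alpha=\frac{c_{\mu_2}}{(1-\alpha)(\mu_2-\mu_1)}$ and
\[
m_{\alpha,n}(i)=\min\{V_{\alpha,n}(i,1),\,K_\alpha+V_{\alpha,n}((i-1)^+,1)\},
\]
so that the recursion in the proof of Proposition~\ref{increasingconvexvalues} expresses $\Delta_{\alpha,n+1}(i,q)$ as
\[
h(i)-h(i-1)+(1-\alpha)\bigl(\text{a non-negative combination of } \Delta_{\alpha,n}(i+1,\cdot),\ \Delta_{\alpha,n}(i,\cdot),\ \Delta_{\alpha,n}(i-1,\cdot)\bigr)
\]
plus $(1-\alpha)(\mu_2-\mu_1)\bigl(m_{\alpha,n}(i)-m_{\alpha,n}(i-1)\bigr)$ when $q=1$. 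At $i=1$, the terms $V((i-1)^+)$ contribute a zero difference.

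\textbf{Monotonicity in $n$.} The base case holds because $V_{\alpha,1}(i,0)=c_\mu+h(i)$ and $V_{\alpha,1}(i,1)=h(i)$ (since $c_{\mu_1}=0$). Hence $\Delta_{\alpha,1}=h(i)-h(i-1)\geq 0=\Delta_{\alpha,0}$. For the step, first treat $q=0$. Its recursion involves only $V^0$ and has fixed non-negative coefficients, so $\Delta_{\alpha,n}\le\Delta_{\alpha,n+1}$ propagates directly. For $q=1$, the linear terms, including the $\beta$-term that uses the already established $q=0$ result, propagate in the same way. What remains is to show that $m_{\alpha,n}(i)-m_{\alpha,n}(i-1)\le m_{\alpha,n+1}(i)-m_{\alpha,n+1}(i-1)$.

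By convexity, the minimiser in $m_{\alpha,n}$ has threshold form: the first branch is chosen iff $\Delta_{\alpha,n}(i,1)\le K_\alpha$. The induction hypothesis $\Delta_{\alpha,n}\le\Delta_{\alpha,n+1}$ implies that the threshold at step $n+1$ is at most the threshold at step $n$. Consequently $m_{\alpha,n}(i)-m_{\alpha,n}(i-1)$ is one of $\Delta_{\alpha,n}(i,1)$, $\Delta_{\alpha,n}(i-1,1)$, or the transitional value $K_\alpha$ or $\Delta_{\alpha,n}(i-1,1)$, depending on where $i$ lies relative to the threshold. I would then run through the few compatible case combinations for $(n,n+1)$. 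Each comparison reduces to either the induction hypothesis or the defining inequality of the branch, namely $\Delta\le K_\alpha$ on one side and $\Delta> K_\alpha$ on the other. I expect this case analysis for the min term, which is non-linear and sits at the threshold boundary, to be the main obstacle. It amounts to showing that the controlled-arrival-type operator $T_D T_{CA}$ preserves supermodularity in $(i,n)$, in Koole's sense~\cite{KooleLong}.

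\textbf{Monotonicity in $\alpha$.} Fix $\alpha<\alpha'$ and prove $\Delta_{\alpha',n}\le\Delta_{\alpha,n}$ by induction on $n$. The base case $n=1$ is trivial, since the differences equal $h(i)-h(i-1)$ for both discount factors. To remove the $\alpha$-dependence of $K_\alpha$, rewrite
\[
(1-\alpha)(\mu_2-\mu_1)m_{\alpha,n}(i)=\min\bigl\{(1-\alpha)(\mu_2-\mu_1)V_{\alpha,n}(i,1),\ c_{\mu_2}+(1-\alpha)(\mu_2-\mu_1)V_{\alpha,n}((i-1)^+,1)\bigr\}.
\]
Since the differences are non-negative, the hypothesis $\Delta_{\alpha',n}\le\Delta_{\alpha,n}$ together with $1-\alpha'<1-\alpha$ gives $(1-\alpha')\Delta_{\alpha',n}\le(1-\alpha)\Delta_{\alpha,n}$. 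This propagates through the linear terms immediately. For the min term, the same threshold case analysis as above applies, now with the fixed constant $c_{\mu_2}$ and the scaled differences $(1-\alpha)\Delta$.

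\textbf{Thresholds.} By (\ref{eq:convthresh}), $i_{\alpha,n}$ is the largest $i$ with $\Delta_{\alpha,n-1}(i,1)\le K_\alpha$, and this is well defined because $\Delta$ is non-decreasing in $i$. As $n$ grows the left-hand side grows, so the threshold does not increase. As $\alpha$ grows the left-hand side decreases and $K_\alpha$ increases, so the threshold does not decrease.
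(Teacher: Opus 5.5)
Your proposal is correct and follows essentially the same route as the paper. Both argue by induction on $n$: the linear increment terms are propagated by the induction hypothesis together with non-negativity of the increments, and the min term is handled by a case analysis over where $i-1,i$ sit relative to the convexity-induced thresholds, using that a switch to $\mu_2$ in the dominated iterate forces a switch in the dominating one. The differences are only presentational: the paper writes out the $\alpha$-case, disposing of the mixed action combinations by substituting a suboptimal action into one of the minima, and calls the $n$-case analogous, whereas you detail the $n$-case and observe that rescaling by $(1-\alpha)(\mu_2-\mu_1)$ makes the $\alpha$-case identical.
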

\begin{proof}
We prove these statements by induction in $n$, starting with monotonicity in $\alpha$.

    Let $\alpha'>\alpha$, $i\in \mathbb{N}_{\geq 1},q\in \{0,1\}$.    We consider $V_{\alpha,n}(i,q)-V_{\alpha,n}(i-1,q)$. Clearly,\\
    $V_{\alpha,0}(i,q)-V_{\alpha,0}(i-1,q)=0$ is non-increasing in $\alpha$.
    Let $n\in \mathbb{N}_0$.
    Assume that $V_{\alpha,n}(i,q)-V_{\alpha,n}(i-1,q)$ is non-increasing in $\alpha$ for $i\in \mathbb{N}_{\geq 1},q\in \{0,1\}$. 
    The induction hypothesis combined with Equation~(\ref{eq:convthresh}) implies that threshold $i_{\alpha,n+1}$ is non-decreasing in $\alpha$ .
    The induction hypothesis and the fact that $V_{\alpha,n}(j,0)$ is non-decreasing in $j$ (cf. Proposition~\ref{increasingconvexvalues}) give that
    \begin{equation*}
        \begin{split}
           & V_{\alpha',n+1}(i,0)-V_{\alpha',n+1}(i-1,0)=h(i)-h(i-1)\\
            &\quad \quad+(1-\alpha')\Big(\mu (V_{\alpha',n}(i-1,0)-V_{\alpha',n}((i-2)^+,0))\\
            &\quad \quad \quad \quad \quad \quad\quad+(1-\mu- \lambda) (V_{\alpha',n}(i,0)-V_{\alpha',n}(i-1,0))
     +\lambda (V_{\alpha',n}(i+1,0)-V_{\alpha',n}(i,0))\Big) \\
       &\quad \leq h(i)-h(i-1) \\
     &\quad \quad  + (1-\alpha)\Big(\mu (V_{\alpha,n}(i-1,0)-V_{\alpha,n}((i-2)^+,0))\\
      &\quad \quad \quad \quad \quad \quad\quad+(1-\mu- \lambda) (V_{\alpha,n}(i,0)-V_{\alpha,n}(i-1,0))
     +\lambda (V_{\alpha,n}(i+1,0)-V_{\alpha,n}(i,0))\Big) \\
      &\quad = V_{\alpha,n+1}(i,0)-V_{\alpha,n+1}(i-1,0).
        \end{split}
    \end{equation*}

Next, we verify that $V_{\alpha',n+1}(i,1)-V_{\alpha',n+1}(i-1,1) \leq V_{\alpha,n+1}(i,1)-V_{\alpha,n+1}(i-1,1)$. Now,

\begin{equation*}
        \begin{split}
           & V_{\alpha',n+1}(i,1)-V_{\alpha',n+1}(i-1,1)=h(i)-h(i-1)\\
            &\quad \quad+(1-\alpha') \Big(\mu_1(V_{\alpha',n}(i,1)-V_{\alpha',n}(i-1,1))+\mu_1 (V_{\alpha',n}(i-1,1)-V_{\alpha',n}((i-2)^+,1))\\
            &\quad \quad\quad \quad\quad \quad\quad \quad \quad\quad \quad +\lambda  (V_{\alpha',n}(i+1,0)-V_{\alpha',n}(i,1))+\beta (V_{\alpha',n}(i,0)-V_{\alpha',n}(i-1,0))\Big)\\
      &\quad \quad+  \min_{a\in \{\mu_1,\mu_2\}} \Big\{ c_a +(1-\alpha')\Big ( (a-\mu_1) V_{\alpha',n}(i-1,1)+(\mu_2-a) V_{\alpha',n}(i,1)    \Big)\Big\}\\
       &\quad \quad - \min_{a\in \{\mu_1,\mu_2\}} \Big\{ c_a +(1-\alpha')\Big ( (a-\mu_1) V_{\alpha',n}((i-2)^+,1)+(\mu_2-a) V_{\alpha',n}(i-1,1)    \Big)\Big\}.
        \end{split}
    \end{equation*}
    
    Note that the increments are non-negative by virtue of Proposition~\ref{increasingconvexvalues}.
    The induction hypothesis yields 
    \begin{equation}
        \begin{split}\label{eq:split1}
   &(1-\alpha') \Big(\mu_1(V_{\alpha',n}(i,1)-V_{\alpha',n}(i-1,1))+\mu_1 (V_{\alpha',n}(i-1,1)-V_{\alpha',n}((i-2)^+,1))\\
            & \quad\hspace{3.3cm}          +\lambda  (V_{\alpha',n}(i+1,0)-V_{\alpha',n}(i,1))+\beta (V_{\alpha',n}(i,0)-V_{\alpha',n}(i-1,0))\Big)\\
    &\quad \leq       (1-\alpha) \Big(\mu_1(V_{\alpha,n}(i,1)-V_{\alpha,n}(i-1,1))+\mu_1 (V_{\alpha,n}(i-1,1)-V_{\alpha,n}((i-2)^+,1))\\
            &\quad \hspace{3.5cm}          +\lambda  (V_{\alpha,n}(i+1,0)-V_{\alpha,n}(i,1))+\beta (V_{\alpha,n}(i,0)-V_{\alpha,n}(i-1,0))\Big).
        \end{split}
    \end{equation}

    We now proceed to show that \begin{equation}
        \begin{split} \label{eq:splitmin}
           & \min_{a\in \{\mu_1,\mu_2\}} \Big\{ c_a +(1-\alpha')\Big ( (a-\mu_1) V_{\alpha',n}(i-1,1)+(\mu_2-a) V_{\alpha',n}(i,1)    \Big)\Big\}\\
       & \quad \quad - \min_{a\in \{\mu_1,\mu_2\}} \Big\{ c_a +(1-\alpha')\Big ( (a-\mu_1) V_{\alpha',n}((i-2)^+,1)+(\mu_2-a) V_{\alpha',n}(i-1,1)    \Big)\Big\}\\
       &\quad \leq  \min_{a\in \{\mu_1,\mu_2\}} \Big\{ c_a +(1-\alpha)\Big ( (a-\mu_1) V_{\alpha,n}(i-1,1)+(\mu_2-a) V_{\alpha,n}(i,1)    \Big)\Big\}\\
       &\quad \quad - \min_{a\in \{\mu_1,\mu_2\}} \Big\{ c_a +(1-\alpha)\Big ( (a-\mu_1) V_{\alpha,n}((i-2)^+,1)+(\mu_2-a) V_{\alpha,n}(i-1,1)    \Big)\Big\}.
        \end{split}
    \end{equation} 

To this end, we will consider all possible combinations of minimising actions,
    which are denoted by $a_i',a_{i-1}',a_i$, and $a_{i-1}$, respectively. If $a'_{i-1}=a'_{i}=a_{i-1}=a_{i}$, then we can use non-increasingness of  $V_{\alpha,n}(j,1)-V_{\alpha,n}((j-1)^+,1)$ in $\alpha$ and non-decreasingness of $V_{\alpha,n}(j,1)$ in $j$ to find
 \begin{equation*}
        \begin{split}
           & \min_{a\in \{\mu_1,\mu_2\}} \Big\{ c_a +(1-\alpha')\Big ( (a-\mu_1) V_{\alpha',n}(i-1,1)+(\mu_2-a) V_{\alpha',n}(i,1)    \Big)\Big\}\\
       &  \quad - \min_{a\in \{\mu_1,\mu_2\}} \Big\{ c_a +(1-\alpha')\Big ( (a-\mu_1) V_{\alpha',n}((i-2)^+,1)+(\mu_2-a) V_{\alpha',n}(i-1,1)    \Big)\Big\}\\
       & =  c_{a'_i}+ (1-\alpha')\Big ( ({a'_i}-\mu_1) V_{\alpha',n}(i-1,1)+(\mu_2-{a'_i}) V_{\alpha',n}(i,1)    \Big)\\
       & \quad - c_{a'_{i-1}}- (1-\alpha')\Big ( ({a'_{i-1}}-\mu_1) V_{\alpha',n}((i-2)^+,1)+(\mu_2-{a'_{i-1}}) V_{\alpha',n}(i-1,1)    \Big)\\
       &= (1-\alpha')\Big ( ({a'_i}-\mu_1) (V_{\alpha',n}(i-1,1) -V_{\alpha',n}((i-2)^+,1))+(\mu_2-{a'_i})( V_{\alpha',n}(i,1) -V_{\alpha',n}(i-1,1))\Big) \\
         &\leq (1-\alpha)\Big ( ({a'_i}-\mu_1) (V_{\alpha,n}(i-1,1) -V_{\alpha,n}((i-2)^+,1))+(\mu_2-{a'_i})( V_{\alpha,n}(i,1) -V_{\alpha,n}(i-1,1))\Big) \\
       &=  \min_{a\in \{\mu_1,\mu_2\}} \Big\{ c_a +(1-\alpha)\Big ( (a-\mu_1) V_{\alpha,n}(i-1,1)+(\mu_2-a) V_{\alpha,n}(i,1)    \Big)\Big\}\\
       & \quad - \min_{a\in \{\mu_1,\mu_2\}} \Big\{ c_a +(1-\alpha)\Big ( (a-\mu_1) V_{\alpha,n}((i-2)^+,1)+(\mu_2-a) V_{\alpha,n}(i-1,1)    \Big)\Big\}.
        \end{split}
    \end{equation*} 

    If $(a_{i-1}',a_{i}')=(a_{i-1},a_{i})=(\mu_1,\mu_2)$,
    \begin{equation*}
        \begin{split}
           & \min_{a\in \{\mu_1,\mu_2\}} \Big\{ c_a +(1-\alpha')\Big ( (a-\mu_1) V_{\alpha',n}(i-1,1)+(\mu_2-a) V_{\alpha',n}(i,1)    \Big)\Big\}\\
       &  \quad - \min_{a\in \{\mu_1,\mu_2\}} \Big\{ c_a +(1-\alpha')\Big ( (a-\mu_1) V_{\alpha',n}((i-2)^+,1)+(\mu_2-a) V_{\alpha',n}(i-1,1)    \Big)\Big\}\\
       & =  c_{\mu_2}+ (1-\alpha')(\mu_{2}-\mu_1) V_{\alpha',n}(i-1,1) -  (1-\alpha')(\mu_2-\mu_{1}) V_{\alpha',n}(i-1,1) =c_{\mu_2}   \\
       &=  \min_{a\in \{\mu_1,\mu_2\}} \Big\{ c_a +(1-\alpha)\Big ( (a-\mu_1) V_{\alpha,n}(i-1,1)+(\mu_2-a) V_{\alpha,n}(i,1)    \Big)\Big\}\\
       & \quad - \min_{a\in \{\mu_1,\mu_2\}} \Big\{ c_a +(1-\alpha)\Big ( (a-\mu_1) V_{\alpha,n}((i-2)^+,1)+(\mu_2-a) V_{\alpha,n}(i-1,1)    \Big)\Big\}.
        \end{split}
    \end{equation*} 

    We note that $$a'_j=\mu_2\implies V_{\alpha',n}(j,1)-V_{\alpha',n}(j-1,1)\geq \frac{c_{\mu_2}}{(1-\alpha')(\mu_2-\mu_1)},$$ and therefore $$V_{\alpha,n}(j,1)-V_{\alpha,n}(j-1,1)\geq V_{\alpha',n}(j,1)-V_{\alpha',n}(j-1,1)\geq \frac{c_{\mu_2}}{(1-\alpha')(\mu_2-\mu_1)},$$ as such  $a'_j=\mu_2$ implies $a_j=\mu_2$. Furthermore, the optimal actions are of a threshold form by Proposition~\ref{increasingconvexvalues}.
 Thus, it only remains to check the cases
 $$ (a_{i-1}',a_i',a_{i-1},a_i)\in\{
     (\mu_1,\mu_1,\mu_1,\mu_2),
     (\mu_1,\mu_1,\mu_2,\mu_2),
     (\mu_1,\mu_2,\mu_2,\mu_2).
 \}$$

We note that the left-hand side in Equation~(\ref{eq:splitmin}) increases if we change the optimal action $a_i'$. Likewise, the right-hand side in Equation~(\ref{eq:splitmin}) decreases if we change the optimal action $a_{i-1}$. This means that Equation~(\ref{eq:splitmin}) holds for the three remaining cases through reduction to the case $(a_{i-1}',a_i')=(\mu_1,\mu_2),\\
(a_{i-1},a_i)=(\mu_1,\mu_2)$.

    Now, we can conclude that in any of the cases, Equation~(\ref{eq:splitmin}) holds and together with Equation~(\ref{eq:split1}) we can conclude that $V_{\alpha',n+1}(i,1)-V_{\alpha',n+1}(i-1,1)\leq V_{\alpha,n+1}(i,1)-V_{\alpha,n+1}(i-1,1) $.
    Again, by Equation~(\ref{eq:convthresh}), this implies that $i_{\alpha,n+2}$ is non-decreasing in $\alpha$.
    This completes the induction step.


The facts that $V_{\alpha,n}(i,q)-V_{\alpha,n}(i-1,q)$ is non-decreasing in $n\in \mathbb{N}_0$ for any $i\in \mathbb{N}_{\geq 1},q\in \{0,1\}$ and that $i_{\alpha,n}$ is non-increasing in $n$, can be shown analogously.

\end{proof}

Note that $i_{\alpha,0}=\infty$ for any $0\leq \alpha<1$. However, 
if $i_\alpha^\ast<\infty$,
 we can determine an $n\in \mathbb{N}_0$ such that $i_{\alpha,n}$ is finite. To this end, we first derive suitable bounds on $V_{\alpha,n}(i,1)-V_{\alpha,n}(i-1,1)$ based on the convex holding cost function. 

\begin{lemma} \label{boundedthresholdhelp}
     Let $\alpha\in (0,1)$, $V_{\alpha,0}\equiv 0$, and suppose that $i,n\in \mathbb{N}_0$ are given such that $i>n$ and $i_{\alpha,n}\geq i+2n$.
  Then,
  \begin{equation*}
      \begin{split}
          \frac{1-(1-\alpha)^n}{\alpha}\cdot (h(i-n+1)-h(i-n)) &\leq V_{\alpha,n}(i,1)-V_{\alpha,n}(i-1,1) \\
          &\leq \frac{1-(1-\alpha)^n}{\alpha}\cdot (h(i+n)-h(i+n-1)).
      \end{split}
  \end{equation*}
  For $\alpha=0$, let $i,n\in \mathbb{N}_0$ be given such that $i>n$ and $i_{n}\geq i+2n$. Then, 
    \begin{equation*}
      \begin{split}
          n (h(i-n+1)-h(i-n)) &\leq V_{n}(i,1)-V_{n}(i-1,1) \leq n (h(i+n)-h(i+n-1)).
      \end{split}
  \end{equation*}
\end{lemma}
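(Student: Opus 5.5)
The proof is an induction on the number of value-iteration steps, working with the first differences $D_{m}(j,q):=V_{\alpha,m}(j,q)-V_{\alpha,m}(j-1,q)$ and $\Delta h(k):=h(k)-h(k-1)$. Set $S_m:=\frac{1-(1-\alpha)^m}{\alpha}$ for $\alpha>0$ and $S_m:=m$ for $\alpha=0$, so that $S_0=0$ and $S_{m+1}=1+(1-\alpha)S_m$. Assumption~\ref{holdingassump0} makes $h$ convex, so $\Delta h$ is non-decreasing. This is the only property of $h$ the argument uses.

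The strengthened claim is as follows. For every $m\le n$, every $q\in\{0,1\}$ and every $j$ with $m<j\le i+2n-m$ (a window shrinking by one on each side per step),
\[
S_m\,\Delta h(j-m+1)\;\le\; D_m(j,q)\;\le\; S_m\,\Delta h(j+m).
\]
Taking $m=n$ and $j=i$ gives the lemma. The base case $m=0$ is trivial since $V_{\alpha,0}\equiv 0$ and $S_0=0$.

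\textbf{Linear recursion for the differences.} By Proposition~\ref{decreasingthreshold}, the thresholds are non-increasing in the step index, so $i_{\alpha,m+1}\ge i_{\alpha,n}\ge i+2n$ for all $m+1\le n$. Hence in step $m+1$ the action $\mu_1$ is chosen in every state $(k,1)$ with $k\le i+2n$. For those states the minimum in Algorithm~\ref{VIalpha} is attained at $a=\mu_1$, and $V_{\alpha,m+1}(k,1)$ is affine in $V_{\alpha,m}$. I use the operator form from the proof of Proposition~\ref{increasingconvexvalues}, where the arrival term is $V(k+1,1)$. Subtracting the expressions at $k=j$ and $k=j-1$, with both $j,j-1\le i+2n$ and $j-1\ge 1$ so that the truncation $(\cdot)^+$ plays no role, gives
\[
D_{m+1}(j,1)=\Delta h(j)+(1-\alpha)\big(\lambda D_m(j+1,1)+\beta D_m(j,0)+\mu_1 D_m(j-1,1)+(1-\lambda-\beta-\mu_1)D_m(j,1)\big).
\]
The weights are non-negative and sum to one. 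The analogous identity for $q=0$ holds with weights $\lambda,\,1-\lambda-\mu,\,\mu$ and needs no threshold condition.

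\textbf{Induction step.} The right-hand side involves $D_m$ only at the indices $j-1$, $j$ and $j+1$, each of which lies in the step-$m$ window when $j$ lies in the step-$(m+1)$ window. For the lower bound, each of these terms is at least $S_m\Delta h(j-1-m+1)=S_m\Delta h(j-m)$ because $\Delta h$ is monotone. Together with $\Delta h(j)\ge\Delta h(j-m)$ this yields
\[
D_{m+1}(j,q)\ge\big(1+(1-\alpha)S_m\big)\Delta h(j-m)=S_{m+1}\Delta h\big(j-(m+1)+1\big).
\]
For the upper bound, each term is at most $S_m\Delta h(j+1+m)$, and $\Delta h(j)\le\Delta h(j+m+1)$, so $D_{m+1}(j,q)\le S_{m+1}\Delta h(j+(m+1))$. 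The case $\alpha=0$ is identical with $S_m=m$.

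\textbf{Main obstacle.} The delicate part is the index bookkeeping rather than the estimates. One must check three things: that the hypotheses $i>n$ and $i_{\alpha,n}\ge i+2n$ keep every state touched during the $n$ steps away from the reflecting boundary $0$; that they keep those states inside the region where $\mu_1$ is optimal at all earlier steps; and that this uses the monotonicity of the thresholds in $n$ from Proposition~\ref{decreasingthreshold}. The margin $2n$ on the right accommodates the window shrinking by one per step. Its precise accounting, and whether $i>n$ or $i\ge n+1$ is exactly what is needed at $j-1$, is what I would verify most carefully.
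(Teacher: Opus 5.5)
Your proposal is correct and is essentially the paper's proof. Both argue by induction over the value-iteration steps, fix the action to $\mu_1$ through the non-increasing thresholds, use the affine recursion for the first differences, the monotonicity of $h(k)-h(k-1)$, and the identity $1+(1-\alpha)S_m=S_{m+1}$. The differences are minor: the paper first uses convexity of $V_{\alpha,m}$ (Proposition~\ref{increasingconvexvalues}) to bound the weighted average between the differences at $j-1$ and $j+1$, whereas you apply the induction hypothesis at $j-1,j,j+1$ directly, and your shrinking window $m<j\le i+2n-m$ is cleaner bookkeeping than the paper's stated range $i-n\le j-m\le i+n$, whose upper end does not quite close under $j\mapsto j+1$.
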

\begin{proof}
    Let $n,i\in \mathbb{N}_0$ 
    such that $i>n$ and $i_{\alpha,n}\geq i+2n$. Proposition~\ref{decreasingthreshold} implies that  $i_{\alpha,m}\geq i+2n$ 
     for $0\leq m \leq n$. 

    By induction, we will now show that \begin{equation*}
        \begin{split}
             \frac{1-(1-\alpha)^m}{\alpha}\cdot (h(j-m+1)-h(j-m)) &\leq V_{\alpha,m}(j,q)-V_{\alpha,m}(j-1,q)\\
             &\leq \frac{1-(1-\alpha)^m}{\alpha}\cdot (h(j+m)-h(j+m-1))
        \end{split}
    \end{equation*} for $0\leq m\leq n,q\in \{0,1\}$ and  $i-n\leq j-m \leq i+n $.

    As $V_{\alpha,0}\equiv 0$, the statement holds for $m=0$.
     Suppose that the statement holds for $m <n$. We show that the statement also holds for $m+1$.
     Let $i-n\leq j-m-1 \leq i+n $.
     As $i_{\alpha,m+1}\geq i+2n \geq j,j-1$, the optimal action in step $m+1$ of VI in states $(j,1)$ and $(j-1,1)$ is to use service rate $\mu_1$. Thus,
\begin{equation*}
    \begin{split}
        &V_{\alpha,m+1}(j,1)-V_{\alpha,m+1}(j-1,1)=h(j)-h(j-1)\\
        & \quad \quad \quad  +(1-\alpha)\Big(\mu_1( V_{\alpha,m}(j-1,1)-V_{\alpha,m}(j-2,1)) +\mu_2( V_{\alpha,m}(j,1)-V_{\alpha,m}(j-1,1))\\
        &\hspace{3.7cm}+\beta ( V_{\alpha,m}(j,0)-V_{\alpha,m}(j-1,0))
        +\lambda( V_{\alpha,m}(j+1,1)-V_{\alpha,m}(j,1))\Big).
    \end{split}
\end{equation*}

By convexity of $V_{\alpha,m}(i,q)$ in $i$ (cf. Proposition~\ref{increasingconvexvalues}) this leads to 
\begin{equation}
    \begin{split}\label{eq:insluiten}
        &h(j)-h(j-1)\\
        & \quad \ +(1-\alpha)\Big((1-\beta)( V_{\alpha,m}(j-1,1)-V_{\alpha,m}(j-2,1))+\beta ( V_{\alpha,m}(j-1,0)-V_{\alpha,m}(j-2,0)) \Big)\\
     & \ \ \leq V_{\alpha,m+1}(j,1)-V_{\alpha,m+1}(j-1,1)\\
      &\ \ \leq h(j)-h(j-1)\\
      & \quad \ +(1-\alpha)\Big((1-\beta)( V_{\alpha,m}(j+1,1)-V_{\alpha,m}(j,1))+\beta ( V_{\alpha,m}(j+1,0)-V_{\alpha,m}(j,0))\Big).
    \end{split}
\end{equation}
 Plugging the induction hypothesis for $m$ into Equation~(\ref{eq:insluiten}), using convexity of $h$ and the fact that

\begin{equation*}    \begin{split}\label{eq:inperkhulp}
        1+(1-\alpha)\cdot \frac{1-(1-\alpha)^m}{\alpha} 
       = \frac{1-(1-\alpha)^{m+1}}{\alpha}.
    \end{split}
\end{equation*}
 yields that
 \begin{equation*}
\begin{split}\label{eq:insluiten2}
        &h(j-m)-h(j-m-1)+(1-\alpha)\Big(\frac{1-(1-\alpha)^m}{\alpha}\cdot (h(j-m)-h(j-m-1))\Big)\\
        &=\frac{1-(1-\alpha)^{m+1}}{\alpha}\cdot (h(j-(m+1)+1)-h(j-(m+1))) 
        \\
     &  \leq V_{\alpha,m+1}(j,1)-V_{\alpha,m+1}(j-1,1)\\
      &\leq h(j+1+m)-h(j+m) +(1-\alpha)\Big(\frac{1-(1-\alpha)^m}{\alpha}\cdot (h(j+1+m)-h(j+m))\Big)\\
      &= \frac{1-(1-\alpha)^{m+1}}{\alpha}\cdot (h(j+(m+1))-h(j+(m+1)-1)).
    \end{split}
\end{equation*}

Likewise, one can also show that the induction step holds for the case $q=0$, which concludes the proof.

The non-discounted case ($\alpha=0$) can be shown analogously.
\end{proof}

\begin{proposition}
    \label{boundedthreshold}
     Let $ \alpha\in(0,1)$, $V_{\alpha,0}\equiv 0$ and let $$\sup_i (h(i)-h(i-1))\leq \frac{\alpha c_{\mu_2}}{(1-\alpha)(\mu_2-\mu_1)},$$ then $i_{\alpha,n}=\infty$ for all $n\geq 1
     $. If 
     $$\sup_i (h(i)-h(i-1))> \frac{\alpha c_{\mu_2}}{(1-\alpha)(\mu_2-\mu_1)},$$ then we can compute a value of $n\geq 1$ and $B_n\in \mathbb{N}_0$ such that $i_{\alpha,n}<B_n$.

     For the non-discounted case, we can compute
     $n\geq 1$ and $B_n\in \mathbb{N}_0$ such that $i_n\leq B_n$.
\end{proposition}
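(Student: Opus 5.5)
The plan is to work throughout with the threshold criterion of Equation~(\ref{eq:convthresh}). Action $\mu_1$ is chosen at step $n+1$ in state $(i,1)$ exactly when
\[
\Delta_n(i):=V_{\alpha,n}(i,1)-V_{\alpha,n}(i-1,1)\le \frac{c_{\mu_2}}{(1-\alpha)(\mu_2-\mu_1)}.
\]
So both parts of the proposition reduce to bounding $\Delta_n(i)$, from above for the first claim and from below for the second.

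\emph{First claim.} Write $d:=\sup_i (h(i)-h(i-1))$. I will show by induction on $n$ that, for all $i\ge 1$ and $q\in\{0,1\}$,
\[
V_{\alpha,n}(i,q)-V_{\alpha,n}(i-1,q)\le \frac{1-(1-\alpha)^n}{\alpha}\, d .
\]
This is the global version of the upper bound in Lemma~\ref{boundedthresholdhelp}, and it needs no assumption on the threshold. In the recursion for $V_{\alpha,n+1}(i,1)-V_{\alpha,n+1}(i-1,1)$, the minimum terms are handled as in the proof of Proposition~\ref{decreasingthreshold}. The difference of the two minima is at most $(1-\alpha)$ times a convex combination, with weight $\mu_2-\mu_1$, of increments of $V_{\alpha,n}$; one evaluates both minima at the action minimising the second one. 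Every other term is $(1-\alpha)$ times an increment, and the weights sum to $1$. For the $(i,0)$ recursion the same holds directly. This yields the bound with the factor $1+(1-\alpha)\frac{1-(1-\alpha)^n}{\alpha}=\frac{1-(1-\alpha)^{n+1}}{\alpha}$. Since this factor is strictly less than $1/\alpha$, the hypothesis gives
\[
\Delta_n(i)<\frac{d}{\alpha}\le \frac{c_{\mu_2}}{(1-\alpha)(\mu_2-\mu_1)}
\]
for every $n$. Hence $\mu_1$ is chosen everywhere and $i_{\alpha,n}=\infty$. The boundary case $i=1$, where $(i-2)^+=0$, only produces smaller increments, so it causes no trouble.

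\emph{Second claim.} Suppose $d>\frac{\alpha c_{\mu_2}}{(1-\alpha)(\mu_2-\mu_1)}$. Then pick $i_0$ with $h(i_0)-h(i_0-1)$ close enough to $d$, and $n$ large enough, that
\[
\frac{1-(1-\alpha)^n}{\alpha}\bigl(h(i_0)-h(i_0-1)\bigr)>\frac{c_{\mu_2}}{(1-\alpha)(\mu_2-\mu_1)}.
\]
Both choices are possible since $\frac{1-(1-\alpha)^n}{\alpha}\uparrow 1/\alpha$ and the increments of the convex $h$ increase to $d$. Set $i:=i_0+n$, so that $i>n$, and $B_n:=i+2n$. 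I argue by contradiction. If $i_{\alpha,n}\ge B_n$, then Lemma~\ref{boundedthresholdhelp} applies and gives
\[
\Delta_n(i)\ge \frac{1-(1-\alpha)^n}{\alpha}\bigl(h(i_0+1)-h(i_0)\bigr)\ge \frac{1-(1-\alpha)^n}{\alpha}\bigl(h(i_0)-h(i_0-1)\bigr),
\]
using convexity of $h$. By Equation~(\ref{eq:convthresh}) this forces $\mu_2$ at $(i,1)$ in step $n+1$, so $i_{\alpha,n+1}<i$. To conclude about $i_{\alpha,n}$ itself, I can either shift the index and state the result for $n+1$, or apply the same reasoning one step earlier. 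Since the proposition only asks for some computable $n$ and $B_n$, I will simply relabel. In either case $i_{\alpha,n}<B_n$.

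\emph{Non-discounted case.} Here $\alpha=0$ and the lower bound of Lemma~\ref{boundedthresholdhelp} is $n(h(i-n+1)-h(i-n))$. Since $h$ is unbounded and convex, it has some increment $h(i_0)-h(i_0-1)>0$, and this increment multiplied by $n$ tends to infinity. Choosing $n$ with $n(h(i_0)-h(i_0-1))>c_{\mu_2}/(\mu_2-\mu_1)$ and running the same contradiction argument gives $i_n\le B_n$.

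\emph{Main obstacle.} I expect the hardest step to be the global upper bound on increments in the first claim, specifically controlling the difference of the two minima without any threshold information. I would first try the inequality from Proposition~\ref{decreasingthreshold}: the difference of minima is bounded by the same expression evaluated at the action optimal for the subtracted term.
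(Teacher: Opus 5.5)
Your proposal is correct. For the second and third claims it is the paper's argument. You choose an increment of $h$ and an $n$ so that the lower bound of Lemma~\ref{boundedthresholdhelp} exceeds $c_{\mu_2}/((1-\alpha)(\mu_2-\mu_1))$, set $B_n=i+2n$, and split on whether $i_{\alpha,n}\ge B_n$. Taking $i=i_0+n$ instead of the paper's $i=j+n-1$ only costs one extra use of convexity.

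When you ``relabel'', cite the monotonicity $i_{\alpha,n+1}\le i_{\alpha,n}$ from Proposition~\ref{decreasingthreshold}. That is what turns ``either $i_{\alpha,n}<B_n$ or $i_{\alpha,n+1}<B_n$'' into the single computable statement $i_{\alpha,n+1}<B_n$; the paper leaves this step equally implicit.

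The first claim is where you genuinely differ. The paper inducts on $i_{\alpha,n}=\infty$. That hypothesis guarantees $\mu_1$ is used in every state up to step $n$, so the upper bound of Lemma~\ref{boundedthresholdhelp}, proved via the linear recursion under $\mu_1$, applies for $i>n$. The states $i\le n$ are then covered implicitly by the threshold structure.

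You instead prove the unconditional bound $V_{\alpha,n}(i,q)-V_{\alpha,n}(i-1,q)\le\frac{1-(1-\alpha)^n}{\alpha}\,d$ for all $i\ge 1$. You estimate the difference of the two minima at the minimiser of the subtracted term, so the increments of $V_{\alpha,n}$ enter with total weight $\lambda+\beta+2\mu_1+(\mu_2-\mu_1)=1$. This is valid; the boundary $i=1$ contributes only zero increments or the increment at $1$.

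Your route needs no threshold information, no restriction $i>n$, and no convexity of $h$ or of $V_{\alpha,n}$. It shows directly that $\mu_1$ satisfies Equation~(\ref{eq:convthresh}) in every state. The paper's route is shorter on the page only because it reuses the lemma it needs anyway for the second claim.
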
 
\begin{proof}
    Let 
    $\alpha\in(0,1)$, and let $$\sup_i (h(i)-h(i-1))\leq \frac{\alpha c_{\mu_2}}{(1-\alpha)(\mu_2-\mu_1)}.$$ 
    Clearly, $i_{\alpha,0}=\infty$ by Equation~(\ref{eq:convthresh}) as $V_{\alpha,0}\equiv 0$.
    We proceed by induction. Assume that $i_{\alpha,n}=\infty$ for some 
    $n\geq 1$. Applying Lemma~\ref{boundedthresholdhelp} for $i>n$, we obtain
    \begin{equation*}
        \begin{split}
            V_{\alpha,n}(i,1)-V_{\alpha,n}(i-1,1) 
          &\leq \frac{1-(1-\alpha)^n}{\alpha}\cdot (h(i+n)-h(i+n-1)) \\
          &<\frac{1}{\alpha} \cdot \sup_j(h(j)-h(j-1))\leq \frac{c_{\mu_2}}{(1-\alpha)(\mu_2-\mu_1)}.
        \end{split}
    \end{equation*}

Thus, for any state $(i,1)$ with $i>n$, action $\mu_1$ is optimal in step $n+1$ of VI, i.e., $i_{\alpha,n+1}=\infty$. 

Next, let $$\sup_i (h(i)-h(i-1))> \frac{\alpha c_{\mu_2}}{(1-\alpha)(\mu_2-\mu_1)}.$$ Then
there exists a $j\geq 2$ such that $$h(j)-h(j-1)> \frac{\alpha c_{\mu_2}}{(1-\alpha)(\mu_2-\mu_1)}.$$ For this  $j$ we choose $n$, such that also  $$\frac{1-(1-\alpha)^n}{\alpha}\cdot(h(j)-h(j-1))> \frac{\alpha c_{\mu_2}}{(1-\alpha)(\mu_2-\mu_1)}.$$ Next, 
let $i=j+n-1>n$ and $B_n=i+2n$.  Either $i_{\alpha,n}< i+2n\leq B_n$ or $i_{\alpha,n}\geq B_n$. In the second case, Lemma~\ref{boundedthresholdhelp} gives
\begin{equation*}
    \begin{split}
        V_{\alpha,n}(i,1)-V_{\alpha,n}(i-1,1)&\geq  \frac{1-(1-\alpha)^n}{\alpha}\cdot (h(i-n+1)-h(i-n)) \\
        &=\frac{1-(1-\alpha)^n}{\alpha}\cdot (h(j)-h(j-1)) > \frac{c_{\mu_2}}{(1-\alpha)(\mu_2-\mu_1)}.
    \end{split}
\end{equation*}
By Equation~(\ref{eq:convthresh}) this implies that $\phi_{\alpha,n+1}(i,1)=\mu_2$. It follows that $i_{\alpha,n+1}<i\leq B_n$.

Now, we consider the non-discounted case. Let any 
$j\geq 2$ be given such that $h(j)-h(j-1)>0$. Take $$n=\Big\lceil \frac{c_{\mu_2}}{(h(j)-h(j-1))(\mu_2-\mu_1)}\Big\rceil+1.$$ 
Let $i=j+n-1>n$.
Either $i_n<i+2n:=B_n$, or $i_n\geq B_n$ and
Lemma~\ref{boundedthresholdhelp} gives 
\begin{equation*}
    \begin{split}
        V_{n}(i,1)-V_{n}(i-1,1)&\geq  n(h(j-1)-h(j)) > \frac{c_{\mu_2}}{(\mu_2-\mu_1)}.
    \end{split}
\end{equation*}
By Equation~(\ref{eq:convthresh}) we can conclude that $\phi_{n+1}(i,1)=\mu_2$ and therefore $i_{n+1}<i\leq B_n$.
\end{proof}


Because of the non-increasingness of thresholds $i_{\alpha,n}\in\mathbb{N}_0\cup\{\infty\}$
w.r.t. $n$, the limit $i_\alpha=\lim_{n\rightarrow \infty} i_{\alpha,n} $ is attained within finitely many iterations. Consequently, the non-decreasingness of thresholds $i_{\alpha}\in\mathbb{N}_0\cup\{\infty\}$
w.r.t. $\alpha$ results in strong Blackwell optimality~\cite{Blackwell}.



\begin{definition}\label{BlackDef}
    The policy $\phi^\ast\in \Phi$ is called strongly Blackwell optimal if there exists a value of $\alpha' \in \mathbb{R}_{>0}$, such that $V^{\phi^\ast}_\alpha \leq V^{\phi}_\alpha$ in the component-wise ordering for any $0<\alpha <\alpha'$.
\end{definition}

\begin{proposition}
    Any limiting policy $\phi^\ast$ of VI is strongly Blackwell optimal. 
\end{proposition}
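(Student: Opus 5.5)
Everything follows from the threshold structure and the monotonicity of the thresholds in $n$ and $\alpha$.

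\textbf{Step 1: limiting policies are threshold policies.} By Proposition~\ref{increasingconvexvalues}, every $\phi_{\alpha,n}$ is a threshold policy on $\{(i,1): i\in\mathbb{N}_0\}$ with threshold $i_{\alpha,n}\in\mathbb{N}_0\cup\{\infty\}$. On $\{(i,0)\}$ the only action is $\mu$. By Proposition~\ref{decreasingthreshold}, $i_{\alpha,n}$ is non-increasing in $n$. It therefore stabilises after finitely many iterations at some $i_\alpha$, or equals $\infty$ throughout. Hence the limiting policy $\phi_\alpha$ of VI is the threshold policy with threshold $i_\alpha$.

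For $\alpha>0$, convergence of VI (Section~\ref{sect:ConvVI}) makes $\phi_\alpha$ a minimiser in the DDOE~(\ref{DDAO}). So $V^{\phi_\alpha}_\alpha=V_\alpha\le V^\phi_\alpha$ for every $\phi\in\Phi$.

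For $\alpha=0$, Theorem~\ref{convergenceVIM} says that any limit point $\phi^\ast$ of $\{\phi_n\}$ is the threshold policy with threshold $i_0=\lim_n i_n$. By Proposition~\ref{boundedthreshold}, $i_0$ is finite. (If $h$ is constant, so that $h\equiv 0$, the argument needs a separate trivial check.)

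\textbf{Step 2: the threshold is constant for small $\alpha$.} By Proposition~\ref{decreasingthreshold}, $i_{\alpha,n}$ is non-decreasing in $\alpha$ for each $n$. Passing to the limit in $n$ gives $i_\alpha\ge i_{\alpha'}$ for $\alpha\ge\alpha'$, so $\alpha\mapsto i_\alpha$ is non-decreasing with values in $\mathbb{N}_0\cup\{\infty\}$. As $\alpha\downarrow 0$, $i_\alpha$ is non-increasing and bounded below by $i_0$. Since it takes integer values, it is eventually constant: there exist $\alpha'>0$ and $k$ with $i_\alpha=k$ for all $0<\alpha<\alpha'$.

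This is the crux: we must show $k=i_0$, i.e.\ that $\lim_{\alpha\downarrow0}i_\alpha=i_0$. If the limit stays strictly above $i_0$, then Step 2 yields a fixed policy $\phi^{(k)}$ that is $\alpha$-discount optimal for all small $\alpha$, which is already strong Blackwell optimality of that policy. What remains is to identify it with $\phi^\ast$.

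\textbf{Step 3: identifying $k$ with $i_0$.} Fix $n$ with $i_n=i_0$. I would show $i_{\alpha,n}=i_n$ for all $\alpha$ small enough. Indeed, $V_{\alpha,n}$ is a polynomial in $(1-\alpha)$ on any finite set of states, so it is continuous in $\alpha$. The threshold decision~(\ref{eq:convthresh}) is a strict comparison, except in tie cases, and ties can be settled in favour of the action VI picks at $\alpha=0$. I also need to handle the states near $i_n+1$, where $V_n(i,1)-V_n(i-1,1)$ could equal $c_{\mu_2}/(\mu_2-\mu_1)$ exactly. The monotone direction from Proposition~\ref{decreasingthreshold} gives $i_{\alpha,n}\ge i_n$. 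The reverse inequality $i_{\alpha,n}\le i_n$ follows because at state $i_n+1$ the difference strictly exceeds the cutoff at $\alpha=0$, or equals it with $\mu_2$ chosen, and both sides move continuously.

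Hence $i_\alpha\le i_{\alpha,n}=i_0$ for small $\alpha$. Combined with $i_\alpha\ge i_0$, this gives $k=i_0$. Therefore $\phi^\ast=\phi_\alpha$ is $\alpha$-discount optimal for every $0<\alpha<\alpha'$, i.e.\ $V^{\phi^\ast}_\alpha\le V^\phi_\alpha$ component-wise, which is Definition~\ref{BlackDef}.

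I expect the tie-breaking and continuity argument in Step 3 to be the main technical obstacle. The rest is bookkeeping with the earlier propositions.
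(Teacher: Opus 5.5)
Your Steps 1 and 2 are the paper's argument, which consists only of the two sentences before the proposition. There, $i_{\alpha,n}$ stabilises after finitely many iterations, and $i_\alpha$ is non-decreasing in $\alpha$, hence eventually constant as $\alpha\downarrow 0$. So one threshold policy is $\alpha$-discount optimal for all small $\alpha$. Read literally, this only shows that the threshold policy with threshold $\lim_{\alpha\downarrow0} i_\alpha$ is strongly Blackwell optimal. The paper never checks that this equals $i_0$, the threshold of the limit of non-discounted VI.

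Your Step 3 supplies that identification, and it is sound. For fixed $n$, the map $\alpha\mapsto V_{\alpha,n-1}(j,1)$ is continuous; it is piecewise polynomial in $1-\alpha$ because of the minima, not a polynomial, but continuity is all you use. Hence $i_{\alpha,n}=i_n=i_0$ for small $\alpha$, and $i_0\le i_\alpha\le i_{\alpha,n}$ then gives $i_\alpha=i_0$. With Step 3 in place, the eventual-constancy part of Step 2 is redundant. You only need $i_\alpha\ge i_0$ from monotonicity in $\alpha$ and $i_\alpha\le i_{\alpha,n}$ from monotonicity in $n$.

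Two small corrections.

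\textbf{Ties.} Drop the hedge about ties. Under the paper's convention in Equation~(\ref{eq:convthresh}), ties go to $\mu_1$. So $\phi_n(i_n+1,1)=\mu_2$ already means $V_{n-1}(i_n+1,1)-V_{n-1}(i_n,1)>c_{\mu_2}/(\mu_2-\mu_1)$ strictly. Only the strict case occurs, and continuity finishes the argument.

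The alternative you allow, equality with $\mu_2$ chosen, would not be rescued by continuity. By Proposition~\ref{decreasingthreshold} the difference is non-increasing in $\alpha$, while the cutoff $c_{\mu_2}/((1-\alpha)(\mu_2-\mu_1))$ is strictly increasing. So $\mu_1$ would be chosen at $i_n+1$ for every $\alpha>0$.

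No tie-breaking choice at $\alpha>0$ is needed either. For small $\alpha>0$ the comparisons at iteration $n$ are strict at every state. They lie below the cutoff for $j\le i_n$ by monotonicity in $\alpha$, and above it for $j\ge i_n+1$ by continuity and convexity.

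\textbf{The case $h\equiv 0$.} This caveat is unnecessary. Assumption~\ref{holdingassump0} requires $h$ to be unbounded, and that is exactly what Proposition~\ref{boundedthreshold} uses to give $i_0<\infty$.
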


\section{ $\epsilon$-approximation of $s_\alpha(i), \alpha>0$}\label{sect:ConvVIDisc}

We use the convergence of VI for the approximation of the values $s_\alpha(i)$ together with an upper bound $u_M\geq\sup_\phi ||c^\phi||_M$ as derived in Appendix~\ref{App:uM}.
 Combining \cite[Theorem~5.2]{mdppractice}, $V_{\alpha,0}\equiv 0$ 
and the inequality $ || V_{\alpha,1}-V_{\alpha,0} ||_M=|| V_{\alpha,1}||_M \leq  u_M$ from the proof of \cite[Lemma~5.1]{mdppractice}, we obtain  \begin{equation} \label{eq:discountboundje}
    ||V_{\alpha}-V_{\alpha,n}||_M \leq  \frac{(1-\alpha)^n\gamma^n || V_{\alpha,1}-V_{\alpha,0} ||_M}{1-(1-\alpha)\gamma}  \leq  \frac{(1-\alpha)^n\gamma^n u_M}{1-(1-\alpha)\gamma}.
\end{equation} 
Therefore,
\begin{align}
\label{eq:vfan1}
|V_{\alpha}(i,q)-V_{\alpha,n}(i,q)| \leq \frac{(1-\alpha)^n\gamma^n u_M}{1-(1-\alpha)\gamma} \cdot M(i), \quad i\in \mathbb{N}_0, q\in \{0,1\}.
\end{align}
This leads to the following bounds of $V_{\alpha}(i,1),V_\alpha(i,0)$ 
\begin{align}
\label{eq:vfan2}
    0\leq V_{\alpha}(i,1)\leq V_{\alpha}(i,0)
    =|V_{\alpha}(i,0)-V_{\alpha,0}(i,0)| 
    \leq \frac{u_M}{1-(1-\alpha)\gamma} M(i).
\end{align}
Fix $i\in \mathbb{N}_0,\epsilon>0$ and let
\begin{equation}\label{eq:ni}
    n_{\epsilon,i}=
    \Bigg\lceil \log_{(1-\alpha)\gamma}\Bigg(\frac{\epsilon(1-(1-\alpha)\gamma)}{2u_M\cdot \gamma^i}\Bigg) \Bigg\rceil.
\end{equation}
By Equation~(\ref{eq:vfan1}) 
the expected saved cost $s_{\alpha}(i)$ can be $\epsilon$-approximated by \\
$ s_{\alpha,n_{\epsilon,i}}(i)=V_{\alpha,n_{\epsilon,i}}(i,0)-V_{\alpha,n_{\epsilon,i}}(i,1)$ as
\begin{equation}\label{eq:vfanta}
\begin{split}
    | s_{\alpha,n_{\epsilon,i}}(i)-s_{\alpha}(i) |&= \Big|\Big( V_{\alpha,n_{\epsilon,i}}(i,0)-V_{\alpha,n_{\epsilon,i}}(i,1)\Big)-
\Big(V_{\alpha}(i,0)-V_{\alpha}(i,1)\Big)\Big|\\
&\leq\Big| V_{\alpha}(i,0)-V_{\alpha,n_{\epsilon,i}}(i,0)\Big|+
\Big|V_{\alpha}(i,1)-V_{\alpha,n_{\epsilon,i}}(i,1)\Big|\\
&\leq \frac{2(1-\alpha)^{n_{\epsilon,i}}\gamma^{n_{\epsilon,i}} u_M}{1-(1-\alpha)\gamma} \cdot \gamma^i \leq \epsilon.
\end{split}
\end{equation}
 We consider the $n_{\epsilon/2,i}$-stage optimal policy $\phi_{\alpha,n_{\epsilon/2,i}} \in \Phi$.
 Then,
$| s_{\alpha,n_{\epsilon/2,i}}(i)-s_{\alpha}(i) |\leq \frac{\epsilon}{2}$, by virtue of
\cite[Theorem~5.2]{mdppractice} 
the policy $\phi_{\alpha,n_{\epsilon/2,i}}$ is $\epsilon$-optimal for starting state $(i,1)$, i.e.,\\ 
$$|V^{\phi_{\alpha,n_{\epsilon/2,i}}}_\alpha(i,1)-V_\alpha (i,1)|=s^{\phi_{\alpha,n_{\epsilon/2,i}}}_\alpha(i)-s_\alpha(i) \leq \epsilon.$$ We note that $n_{\epsilon/2,i}$ is non-decreasing in $i$, and thus, by Equation~(\ref{eq:vfanta})
,policy $\phi_{\alpha,n_{\epsilon/2,i}}$ is also $\epsilon$-optimal for all starting states $(j,1)$ with $j\leq i$. Furthermore, policy $\phi_{\alpha,n_{\epsilon/2,i}}$ is of a threshold form by Proposition~\ref{increasingconvexvalues}.

By applying Proposition~\ref{increasingconvexvalues} we can determine whether the choice of $\epsilon$ with resulting $n_{\epsilon,i}$ is sufficient to guarantee that the threshold $i_{\alpha,n_{\epsilon,i}}$ is $\alpha$-discount optimal.
\begin{remark}\label{optpoldiscount}
Let $i\in \mathbb{N}_0, \epsilon>0$ be given with $n_{\epsilon,i}$ from Equation~(\ref{eq:ni}). If $i\geq i_{\alpha,n_{\epsilon,i}}+1$ and
\begin{eqnarray*}
V_{\alpha,n_{\epsilon,i}}(
i_{\alpha,n_{\epsilon,i}},1)-V_{\alpha,n_{\epsilon,i}}((
i_{\alpha,n_{\epsilon,i}}-1)^+,1)+2\epsilon &\leq& \frac{c_{\mu_2}}{(1-\alpha)(\mu_2-\mu_1)}\\
&\leq& V_{\alpha,n_{\epsilon,i}}(
i_{\alpha,n_{\epsilon,i}}+1,1)-V_{\alpha,n_{\epsilon,i}}(
i_{\alpha,n_{\epsilon,i}},1)-2\epsilon,    
\end{eqnarray*}
 then the threshold policy $\phi^\ast\in \Phi$ with threshold $i^\ast_\alpha=i_{\alpha,n_{\epsilon,i}}$, defined by $$\phi^\ast(i,1) =\begin{cases}
        \mu_1 \ \ \ \ \ \text{ if } \ \ i\leq i^\ast_\alpha,\\
         \mu_2 \ \ \ \ \ \text{ if } \ \ i> i^\ast_\alpha,
    \end{cases}$$
    is $\alpha$-discount optimal.
\end{remark}


A priori, there is no criterion how to chose $\epsilon$ as the increments can be arbitrarily close to (or even equal to) 
the threshold value. 

For the expected cost difference between the queue using a specific policy $\phi$ and the fixed rate queue, 
we refer to Appendix~\ref{sect:specpol}.

\section{$\epsilon$-approximation of the non-discounted expected saved costs $s(i)$}\label{sect:nondmod}

Clearly, we can restrict VI to the actions of an optimal policy $\phi^\ast$ and VI still converges. 
   \begin{remark} \label{limitsi}
The relative values relate directly to $ s(i)$ through $$s(i):=\mathbb{E}^{\phi^\ast}_{\delta_i^0 \times \delta_i^1} \Big[ \sum_{n=0}^\infty \Big( c(X_n)- c(Y_n)\Big) \Big]=H^\ast(i,0)-H^\ast(i,1) .$$
\end{remark}

Consider the unique $M$-bounded 
solution $(g^\ast,H^\ast)$ of the DAOE~(\ref{DAOE}) with $H^\ast(0,0)=0 $.
In order to approximate $H^\ast$,
we need to bound its
$M$-norm. 
For this matter, we need
the taboo matrix $\prescript{}{(0,0)}P^\phi$, which is the matrix $P^\phi$ with all entries in the column of state $(0,0)$ replaced by zeroes. As $H^\ast(0,0)=0$ we note that $P^\phi H^\ast = \prescript{}{(0,0)}P^\phi H^\ast$. Hence, 

\begin{equation*}
    \begin{split}
        H^\ast=
        \min_{\phi \in \Phi} \Big\{c^\phi-g^\ast \bold{1}+ \prescript{}{(0,0)}P^\phi H^\ast \Big\},  
    \end{split}
\end{equation*}

where $\bold{1}$ is the all-ones vector.
Inserting minimising policy $\phi^\ast$, we find
\begin{equation*}
    \begin{split}\label{eq:Hbound1}
         H^\ast=c^{\phi^\ast}-g^\ast\bold{1}+\prescript{}{(0,0)}P^{\phi^\ast} H^\ast &\implies (I-\prescript{}{(0,0)}P^{\phi^\ast})H^\ast=c^{\phi^\ast}-g^\ast\bold{1}\\
         &\implies H^\ast=\sum_{n=0}^\infty (\prescript{}{(0,0)}P^{\phi^\ast})^n (c^{\phi^\ast}-g^\ast\bold{1}).
    \end{split}
\end{equation*}

Let, w.l.o.g.,
$u_M\geq \max\{ \sup_\phi ||c^\phi||_M,||g^\ast\bold{1}||_M\}$, 
yielding
\begin{equation}
    \begin{split}\label{eq:Hbound2}
        || H^\ast||_M\leq \sum_{n=0}^\infty (\prescript{}{(0,0)}P^{\phi^\ast})^n ||c^{\phi^\ast}-g^\ast\bold{1}||_M\leq   \sum_{n=0}^\infty (\prescript{}{(0,0)}P^{\phi^\ast})^n u_M  \leq u_M \sum_{n=0}^\infty  \eta^n =\frac{u_M}{1-\eta} .
    \end{split}
\end{equation}

Now, we can approximate $H^\ast$ in two different ways. One way is through $M$-uniform geometric recurrence, as we show with Lemma~\ref{HAppr2} in Appendix~\ref{sect:MProofs}. This is an easy method but results in a very high number of iterations, when $\eta$ is close to $1$.

Alternatively, we show we can $\epsilon$-approximate $H^\ast(\cdot,1)$ using a related discounted model with discount factor $\beta>0$, of which $H^\ast$ is the unique value function in a suitably normed space. This method is less affected by $\eta$.
This $\beta$-discounted model is defined by state space $S'=\mathbb{N}_0$, 
action space $\mathcal{A}'=\{\mu_1,\mu_2\}$ for all the states and costs ${c'}^{\phi}(i)=c_a+h(i)-g_\mu +\beta H^\ast(i,0)$. We assume the fixed values of $g_\mu$ and $H^\ast(i,0)$ to be known.
The transition rates from $i\in \mathbb{N}_0$ to $j\in \mathbb{N}_0$, given policy $\phi$ with $\phi(i)=a$ are given by 
$$ {p'}^{\phi}_{i,j} = \begin{cases} \frac{\lambda}{1-\beta} \ \ \ \ \  
&\text{for } j=i+1,\\
\frac{a}{1-\beta} \mathds{1}_{\{i>0\}} \ \ \ \ \ & \text{for } j=i-1,\\
1-\frac{\lambda+a\mathds{1}_{\{i>0\}}}{1-\beta} \ \ \ \ \ &  \text{for } j=i,\\
0 \ \ \ \ \ \ \ \ \ \ &  \text{otherwise. }\end{cases}$$


\begin{restatable}{lemma}{HApproximate}\label{HAppr}
  Let $V'_{\beta,0}\equiv 0$. For any $\epsilon>0,k\in \mathbb{N}_0$,
 we can compute $\epsilon$-approximations \\
$\tilde{H}_\epsilon(i,1)=V'_{\beta,\Tilde{n}_{\epsilon,k}}(i),0\leq i \leq k$ of the values $H^\ast(0,1),H^\ast(1,1),\dots,H^\ast(k,1)$. 
Here, 
\begin{equation}\label{eq:nek}
\begin{split}
\Tilde{n}_{\epsilon,k}=\Bigg\lceil \log_{(1-\beta)\zeta}\Big(\frac{\epsilon(1-(1-\beta)\zeta)}{\tilde{K}\cdot \zeta^k}\Big)\Bigg\rceil
\end{split}\end{equation}
is the number of iterations of VI
with w.l.o.g., $1<\zeta<1/(1-\beta)$ and $$ \tilde{K}= g_\mu+ u_{M} +\frac{\beta \cdot u_M}{1-\eta}.$$
\end{restatable}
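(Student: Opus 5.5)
Restrict the DAOE (\ref{DAOE}) to the transient states $(i,1)$ and show that $H^\ast(\cdot,1)$ is the unique $M$-bounded fixed point of the $\beta$-discounted optimality operator of the auxiliary model. Then apply the discounted VI error bound (\ref{eq:discountboundje}) with $\alpha$ replaced by $\beta$, the drift function $M(i)=\zeta^i$, and $\gamma=\zeta$.

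\textbf{Rewriting the DAOE.} For $x=(i,1)$ the DAOE reads
\[
g^\ast+H^\ast(i,1)=\min_{a}\Big\{c_a+h(i)+\lambda H^\ast(i+1,1)+a\mathds{1}_{\{i>0\}}H^\ast(i-1,1)+\beta H^\ast(i,0)+(1-\lambda-\beta-a\mathds{1}_{\{i>0\}})H^\ast(i,1)\Big\}.
\]
Since $g^\ast=g_\mu$, we move the known term $\beta H^\ast(i,0)-g_\mu$ into the cost. Dividing the transition weights on the $q=1$ layer by $1-\beta$ gives
\[
H^\ast(i,1)=\min_a\Big\{{c'}^{a}(i)+(1-\beta)\sum_j {p'}^{a}_{i,j}H^\ast(j,1)\Big\}.
\]
This is exactly the DDOE (\ref{DDAO}) of the $\beta$-discounted model $(S',\mathcal{A}',{c'},{p'})$. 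The ${p'}$ are genuine probabilities because $\lambda+\mu_2\le 1-\beta$.

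\textbf{Contraction setup.} I would verify Assumption~\ref{assump2alpha} for the auxiliary model with $M(i)=\zeta^i$ and $1<\zeta<1/(1-\beta)$.
\begin{itemize}
\item ${P'}^\phi M\le\zeta M$ holds since only a step up raises $M$, and it raises it by the factor $\zeta$.
\item $\|{c'}^\phi\|_M\le u_M+g_\mu+\beta\|H^\ast(\cdot,0)\|_M$. By (\ref{eq:Hbound2}), and because the original $M$ has $M(i,0)=\tfrac12\zeta^i\le\zeta^i$, we get $\|H^\ast(\cdot,0)\|_{\zeta^i}\le u_M/(1-\eta)$. Using $g_\mu\le g_\mu\zeta^i$, this gives $\sup_\phi\|{c'}^\phi\|_M\le\tilde K$.
\end{itemize}
Consequently, the VI for the auxiliary model converges to its unique $M$-bounded DDOE solution.

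\textbf{Identification.} We need $H^\ast(\cdot,1)$ to be $M$-bounded in the $\zeta^i$ norm, which follows from (\ref{eq:Hbound2}) since $M(i,1)=\zeta^i$. By uniqueness, $V'_\beta=H^\ast(\cdot,1)$.

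\textbf{Error bound.} Apply (\ref{eq:discountboundje}) with $V'_{\beta,0}\equiv0$ and $\|V'_{\beta,1}\|_M\le\tilde K$:
\[
|V'_{\beta,n}(i)-H^\ast(i,1)|\le \frac{((1-\beta)\zeta)^n\tilde K}{1-(1-\beta)\zeta}\,\zeta^i.
\]
For $i\le k$ this is at most $\epsilon$ once $n\ge\tilde n_{\epsilon,k}$ as in (\ref{eq:nek}). The log base $(1-\beta)\zeta<1$ flips the inequality appropriately.

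\textbf{Main obstacle.} The hardest step is the identification, namely checking that the $M$-norm from Theorem~\ref{convergenceVIM} matches the norm in which the DDOE solution is unique. The subtlety is the factor $\tfrac12$ on the $q=0$ layer, together with the fact that $H^\ast$ can be negative. The cost bound must use absolute values, and any constants lost through the $\tfrac12$ factor must still be absorbed into $\tilde K$.
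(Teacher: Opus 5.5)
Your proposal is correct and follows essentially the same route as the paper: rewrite the DAOE on the layer $q=1$ as the DDOE of the auxiliary $\beta$-discounted model, bound $\sup_\phi\|{c'}^\phi\|_{\tilde M}$ by $\tilde K$ via (\ref{eq:Hbound2}), identify $V'_\beta=H^\ast(\cdot,1)$ by uniqueness of the $\tilde M$-bounded solution, and apply the discounted VI error bound with $\gamma=\zeta$. You are slightly more explicit than the paper about why $H^\ast(\cdot,1)$ must be $\tilde M$-bounded for the uniqueness step. The factor $\tfrac12$ on the $q=0$ layer only tightens your bound on $\|H^\ast(\cdot,0)\|_{\tilde M}$, so the obstacle you flag costs nothing.
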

\begin{proof}

We will show that solutions of the DDOE of this problem coincide with $H^\ast(\cdot,1)$ and we use an approximation for the discounted problem. 
The corresponding DDOE~(\ref{DDAO}) is defined/given by
\begin{equation}\label{lemmaddoe}
\begin{split}
        u(i) &= \min_{a\in \{1,2\}}\Big\{c_a+h(i)-g_\mu+\beta H^\ast(i,0)\\
        &\ \ \ \ +(1-\beta)(\frac{\mu_a}{1-\beta} \mathds{1}_{\{i>0\}} u(i-1)+(1-\frac{\lambda+\mu_a\mathds{1}_{\{i>0\}}}{1-\beta})u(i)+\frac{\lambda}{1-\beta} u(i+1) ) \Big\}\\
        &= \min_{a\in \{1,2\}}\Big\{c_a+h(i)-g_\mu+\beta H^\ast(i,0)\\
        &\ \ \ \ +(\mu_a \mathds{1}_{\{i>0\}} u(i-1)+(1-\beta-\lambda-\mu_a\mathds{1}_{\{i>0\}})u(i)+\lambda u(i+1) )\Big\}.
\end{split}
\end{equation}


We need to show that Assumption~\ref{assump2alpha} holds such that we can consider the expected total $\beta$-discounted cost value function of this model $V'_\beta$ 
as solution of the DDOE.

Define $\tilde{M}(i)=M(i,1)=\zeta^i$.
As in Section~\ref{sect:ConvVI}, $\tilde{M}$ is a $(\zeta,\Phi)$-drift function. 
Clearly $u_M\geq||c_{\mu_2}+h||_{\tilde{M}}$.
We note 
\begin{eqnarray}\label{eq:eerstebound}
\sup_\phi|| {c'}^\phi||_{\tilde{M}}\leq g_\mu+||c_{\mu_2}+h ||_{\tilde{M}} +
\beta|| H^\ast(\cdot  ,0)||_{\tilde{M} }&\leq& g_\mu+ u_{M} +\beta|| H^\ast(\cdot,0)||_{\tilde{M}}.    
\end{eqnarray}

Combining Equation~(\ref{eq:Hbound2})
with Equation~(\ref{eq:eerstebound}), we get
\begin{equation}\label{eq:tweedebound}
    \sup_\phi|| {c'}^\phi||_{\tilde{M}}\leq 
g_\mu+ u_{M} +\frac{\beta \cdot u_M}{1-\eta}=\tilde{K}.
\end{equation}
Then, we consider VI for this model with $n$-th stage value vectors $V'_{\beta,n}$ for $V'_{\beta,0}\equiv 0$.
We note that \\
$|| V'_{\beta,1}-V'_{\beta,0} ||_{\tilde{M}}\leq  \sup_\phi|| {c'}^\phi||_{\tilde{M}}\leq 
\tilde{K}$.
Assumption~\ref{assump2alpha} implies the equivalent of Equation~(\ref{eq:discountboundje}) for this model to hold 
$$ |V'_\beta(i)-V'_{\beta,n}(i)| <  \frac{(1-\beta)^n \zeta^n \tilde{K}}{1-(1-\beta)\zeta} \cdot \zeta^i.$$ 
Thus, for $\tilde{n}_{\epsilon,k}$ as defined in Equation~(\ref{eq:nek})
such that
$|V'_\beta(i)-V'_{\beta,\Tilde{n}_{\epsilon,k}}(i)| \leq \epsilon$, for all $i=0,1,\dots,k$.

$V'_\beta$ is the unique $\tilde{M}$-bounded solution of the DDOE. 
Comparing the DDOE of Equation~(\ref{lemmaddoe}) with  the DAOE~(\ref{DAOE})
gives the equivalence $V'_\beta=H^\ast(i,1)$.
This gives the equivalence of $\epsilon$-approximations $\tilde{H}_\epsilon(i,1)=V'_{\beta,\Tilde{n}_{\epsilon,k}}(i)$ for $0\leq i \leq k$.
\end{proof}

As a consequence, we can use Lemma~\ref{HAppr} 
with Remark~\ref{limitsi} to determine $\epsilon$-approximations of $s(i),i\geq 0$ given by \begin{equation}\label{eq:sepsiloni}
\Tilde{s}_\epsilon(i):=H^\ast(i,0)-H_\epsilon(i,1).
\end{equation}

Again, we can apply Proposition~\ref{increasingconvexvalues} to determine whether the choice of $\epsilon$, with  resulting $\epsilon$-approximation, is sufficient to guarantee that the threshold $i_{\tilde{n}_{\epsilon,k}}$ is optimal.


\begin{remark}\label{optpolnondiscount}
 Let $\epsilon>0,k\in \mathbb{N}_0$ with $\epsilon$-approximations $\Tilde{H}_\epsilon(0,1),\Tilde{H}_\epsilon(1,1),\dots,\Tilde{H}_\epsilon(k,1)$ be given. 
 If there exists 
 $0\leq i^\ast \leq k-1$
 such that
\begin{equation*}
    \begin{split}
    \tilde{H}_\epsilon(i^\ast,1)- \tilde{H}_\epsilon((i^\ast-1)^+,1)+2\epsilon \leq \frac{c_{\mu_2}}{\mu_2-\mu_1} \leq \tilde{H}_\epsilon(i^\ast+1,1)- \tilde{H}_\epsilon(i^\ast,1)-2\epsilon,
    \end{split}
\end{equation*} 
then there exists optimal threshold policy $\phi^\ast \in \Phi$ with threshold $i^\ast$, defined by $$\phi^\ast(i,1) =\begin{cases}
        \mu_1 \ \ \ \ \ \text{ if } \ \ i\leq i^\ast,\\
         \mu_2 \ \ \ \ \ \text{ if } \ \ i> i^\ast.
    \end{cases}$$
\end{remark}

Again, performance guarantees for finding an optimal policy in this way are not possible as the increments can be arbitrarily close to the threshold value.

\begin{restatable}{proposition}{proponondiscounti}\label{corolnondiscounti}
    Let $i\in \mathbb{N}_0$.
    The $\Tilde{n}_{\epsilon/2,i}$-stage optimal policy $\tilde{\phi}_{\Tilde{n}_{\epsilon/2,i}}$ for the  $\beta$-discounted model defines an $\epsilon$-optimal policy $\phi^i_{\epsilon}\in \Phi$
    for the non-discounted model with initial queue length $i\in \mathbb{N}_0$.

\end{restatable}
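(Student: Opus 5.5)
The plan is to carry the $\epsilon$-optimality argument of Section~\ref{sect:ConvVIDisc} over to the auxiliary $\beta$-discounted model of Lemma~\ref{HAppr}. The policy $\phi^i_\epsilon$ is defined by $\phi^i_\epsilon(j,1)=\tilde{\phi}_{\tilde{n}_{\epsilon/2,i}}(j)$ for $j\in\mathbb{N}_0$, and $\phi^i_\epsilon(j,0)=\mu$.

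First I will show that, for any stationary $\phi\in\Phi$, the non-discounted saved cost $s^\phi(i)$ equals $H^\ast(i,0)-V'^{\phi}_\beta(i)$. Here $V'^{\phi}_\beta$ is the $\beta$-discounted value of the restriction of $\phi$ to the transient layer in the auxiliary model. To see this, I fix $\phi$ and write the evaluation (Poisson) equation of the DAOE for $\phi$ on the transient states:
\[
g_\mu+H^\phi(j,1)=c^\phi(j,1)+\lambda H^\phi(j+1,1)+\beta H^\ast(j,0)+\phi(j,1)\mathds{1}_{\{j>0\}}H^\phi(j-1,1)+\big(1-\lambda-\beta-\phi(j,1)\mathds{1}_{\{j>0\}}\big)H^\phi(j,1).
\]
On the closed class the relative values coincide with $H^\ast(\cdot,0)=H_\mu$, because there is no control there. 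This equation is precisely the policy evaluation equation of the auxiliary $\beta$-discounted model with cost ${c'}^\phi$. Its $\tilde M$-bounded solution is unique, by the contraction argument under Assumption~\ref{assump2alpha} used in the proof of Lemma~\ref{HAppr}. Hence $H^\phi(\cdot,1)=V'^{\phi}_\beta$.

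The same argument as in Remark~\ref{limitsi} then gives $s^\phi(i)=H^\ast(i,0)-H^\phi(i,1)$. It applies to an arbitrary fixed $\phi$ rather than to $\phi^\ast$, using the taboo representation $H^\phi=\sum_n({}_{(0,0)}P^\phi)^n(c^\phi-g_\mu\bold{1})$, which converges by $M$-uniform geometric recurrence as in Equation~(\ref{eq:Hbound2}). In particular,
\[
s(i)-s^{\phi^i_\epsilon}(i)=V'^{\phi^i_\epsilon}_\beta(i)-V'_\beta(i),
\]
so it suffices to show that $\tilde{\phi}_{\tilde{n}_{\epsilon/2,i}}$ is $\epsilon$-optimal in state $i$ for the auxiliary discounted model.

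For that step I will invoke \cite[Theorem~5.2]{mdppractice}, exactly as in Section~\ref{sect:ConvVIDisc}. By Lemma~\ref{HAppr}, at $n=\tilde{n}_{\epsilon/2,i}$ we have
\[
\|V'_\beta-V'_{\beta,n}\|_{\tilde M}\,\zeta^i\le\frac{((1-\beta)\zeta)^n\tilde K}{1-(1-\beta)\zeta}\,\zeta^i\le\epsilon/2 .
\]
The $n$-stage minimiser $\tilde\phi_n$ then satisfies $V'^{\tilde\phi_n}_\beta(i)-V'_\beta(i)\le\epsilon$. The standard route goes through the one-step operator of $\tilde\phi_n$, which agrees with the optimal operator at $V'_{\beta,n}$. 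A contraction estimate in $\|\cdot\|_{\tilde M}$ with modulus $(1-\beta)\zeta$ then bounds $V'^{\tilde\phi_n}_\beta-V'_{\beta,n+1}$, and $V'_{\beta,n+1}-V'_\beta$ is already controlled. Both pieces are evaluated at state $i$ and weighted by $\zeta^i$. Combining the displays yields $s(i)-s^{\phi^i_\epsilon}(i)\le\epsilon$.

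The main obstacle is the first step: identifying $H^\phi(\cdot,1)$ with $V'^{\phi}_\beta$ for a suboptimal $\phi$. This requires that $H^\phi(\cdot,1)$ is $\tilde M$-bounded, so that uniqueness in the discounted model applies. I would obtain this from the taboo-series bound, whose $M$-norm is at most $u_M/(1-\eta)$ uniformly in $\phi$. A second, minor obstacle is that the constant in the contraction estimate of \cite[Theorem~5.2]{mdppractice} for the greedy policy may carry an extra factor. If so, I would absorb it by noting that the halved tolerance $\epsilon/2$ in $\tilde n_{\epsilon/2,i}$ is precisely intended to cover two error terms of size $\epsilon/2$.
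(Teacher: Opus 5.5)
Your proposal is correct and follows essentially the same route as the paper. The paper likewise uses \cite[Theorem~5.2]{mdppractice} to make the $\tilde n_{\epsilon/2,i}$-stage policy $\epsilon$-optimal at state $i$ in the auxiliary $\beta$-discounted model, identifies $H^{\phi^i_\epsilon}(\cdot,1)$ with ${V'}^{\tilde\phi}_\beta$ through the DAOE/DDOE correspondence from the proof of Lemma~\ref{HAppr}, and chains the inequalities. Your version adds two justifications the paper only asserts: that this identification holds for a non-optimal policy (via $\tilde M$-boundedness from the taboo-series bound and uniqueness), and the contraction bound for the greedy policy.
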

\begin{proof}
 
Take $\Tilde{n}_{\epsilon/2,i}$ as in Equation~(\ref{eq:nek}).
Then, 
$|V'_\beta(i)-V'_{\beta,\Tilde{n}_{\epsilon/2,i}}(i)| \leq \frac{\epsilon}{2}$. 
     \cite[Theorem~5.2]{mdppractice} gives that
the $\Tilde{n}_{\epsilon/2,i}$-stage optimal policy $\tilde{\phi}_{\Tilde{n}_{\epsilon/2,i}}$ 
is $\epsilon$-optimal for this $\beta$-discounted model with starting state $i$ (as well as for any state $0\leq j \leq i$). 
For the non-discounted model over $S$, we can fix the policy $\phi_\epsilon^i$ which takes actions $\tilde{\phi}_{\Tilde{n}_{\epsilon/2,i}}(j)$ in states $(j,1)\in S$. Denote the $M$-bounded relative value function of the resulting model by $H^{\phi_\epsilon^i}$.  
Noting the equivalence between ${V'}^{\tilde{\phi}_{\Tilde{n}_{\epsilon/2,i}}}_\beta$ and $H^{\phi_{\epsilon}^i}(\cdot,1)$ through the proof of Lemma~\ref{HAppr} gives 
\begin{equation*}
    \begin{split}
       s(i)\geq\mathbb{E}^{\phi^i_{\epsilon}}_{\delta_i^0\times\delta_i^1} \Bigg[ \sum_{n=0}^\infty \Big( c(X_n)- c(Y_n) \Big) \Bigg] &=  H^{\phi_{\epsilon}^i}(i,0)-H^{\phi_{\epsilon}^i}(i,1)= H^\ast(i,0)-{V'}^{\tilde{\phi}_{\Tilde{n}_{\epsilon/2,i}}}_\beta(i) \\
       &\geq H^\ast(i,0)-V'_\beta(i)-\epsilon =H^\ast(i,0)-H^\ast(i,1)-\epsilon =s(i)-\epsilon.
    \end{split}
\end{equation*}

\end{proof}

\section{The expected value saved when starting in the stationary distribution}\label{sect:SavedVals}

For specific starting states, we have 
determined $\epsilon$-approximations of $s_{\alpha}(i),i\geq 0$ (cf. Equation~(\ref{eq:vfanta})) and of $s(i),i\geq 0$ (cf. Equation~(\ref{eq:sepsiloni})).
Using these approximations, we will compute $\epsilon$-approximations of $s_\alpha(\pi_\mu)$ and $s(\pi_\mu)$  in Sections~\ref{sect:discpi} and \ref{sect:nondiscpi}, respectively.
 Furthermore, we will determine $\epsilon$-optimal 
 policies for both cases when the initial queue length distribution is $\pi_\mu$. 





If an unobservable M/M/1 queue with fixed rate $\mu$ is stable, the stationary distribution of the queue length is given by $\pi_{\mu}$. One might wonder what the amount of expected saved cost is by introducing a single period of control (including observation). This is precisely $s_\alpha(\pi_\mu),\alpha\in[0,1)$. 

For probability distributions $\pi$ and function $f$ on a state space $S$, we
introduce the notation of the expected value of $f$ w.r.t. $\pi$ as $f(\pi)=\sum_{y\in S}\pi(y)f(y)$. 

\subsection{The discounted saved value $s_{\alpha}(\pi_{\mu})$}\label{sect:discpi}


To determine an $\epsilon$-approximation of $s_\alpha(\pi_\mu)$ we use the $\epsilon$-approximations derived in Section~\ref{sect:ConvVIDisc}.
We restrict the choice of $\gamma$ 
such that $\gamma \rho_{\mu}<1$. This implies that $V_\alpha$ is integrable w.r.t. $\pi_\mu$.

\begin{restatable}{theorem}{discountfind}\label{DiscountFind}
   For $\epsilon>0$, we can $\epsilon$-approximate the expected saved cost $s_{\alpha}(\pi_{\mu})$ by  $$\Tilde{s}_{\alpha,\epsilon}(\pi_{\mu})=\sum_{i=0}^k \pi_{\mu}(i) V_{\alpha,n_{\epsilon/2,k}}(i,0)-\sum_{i=0}^k \pi_{\mu}(i)V_{\alpha,n_{\epsilon/2,k}}(i,1), $$ 
   where 
   \begin{equation}\label{eq:kform}
k=\Bigg\lceil\log_{\gamma \rho_{\mu}}\Big(\frac{\epsilon(1-\gamma \rho_{\mu})(1-(1-\alpha)\gamma ) }{2u_M(1-\rho_{\mu})}\Big)  \Bigg\rceil .
\end{equation}

   and $n_{\epsilon/2,k}$ as defined in Equation~(\ref{eq:ni}).
\end{restatable}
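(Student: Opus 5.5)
The plan is to split the error into a truncation part and a value-iteration part. We have $s_\alpha(\pi_\mu)=\sum_{i\ge 0}\pi_\mu(i)\big(V_\alpha(i,0)-V_\alpha(i,1)\big)$; this is well defined because $0\le V_\alpha(i,1)\le V_\alpha(i,0)\le \frac{u_M}{1-(1-\alpha)\gamma}\gamma^i$ by Equation~(\ref{eq:vfan2}) and $\gamma\rho_\mu<1$. Using the triangle inequality, we bound
\begin{equation*}
|s_\alpha(\pi_\mu)-\tilde s_{\alpha,\epsilon}(\pi_\mu)|\le \Big|\sum_{i>k}\pi_\mu(i)\big(V_\alpha(i,0)-V_\alpha(i,1)\big)\Big|+\sum_{i=0}^k\pi_\mu(i)\big|s_\alpha(i)-s_{\alpha,n_{\epsilon/2,k}}(i)\big| .
\end{equation*}
The goal is to show that each term is at most $\epsilon/2$.

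For the tail term, the difference $V_\alpha(i,0)-V_\alpha(i,1)$ lies in $[0,V_\alpha(i,0)]$. Hence the tail is bounded by
\begin{equation*}
\frac{u_M(1-\rho_\mu)}{1-(1-\alpha)\gamma}\sum_{i>k}(\gamma\rho_\mu)^i=\frac{u_M(1-\rho_\mu)(\gamma\rho_\mu)^{k+1}}{(1-(1-\alpha)\gamma)(1-\gamma\rho_\mu)}.
\end{equation*}
The choice of $k$ in Equation~(\ref{eq:kform}) is designed so that $(\gamma\rho_\mu)^{k}$ is at most $\frac{\epsilon(1-\gamma\rho_\mu)(1-(1-\alpha)\gamma)}{2u_M(1-\rho_\mu)}$. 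The base $\gamma\rho_\mu<1$, so the logarithm reverses the inequality and the ceiling goes the right way. This gives a tail bound of at most $\frac{\epsilon}{2}\gamma\rho_\mu\le \epsilon/2$.

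For the head term, Equation~(\ref{eq:vfanta}) with $\epsilon/2$ in place of $\epsilon$ gives $|s_{\alpha,n_{\epsilon/2,i}}(i)-s_\alpha(i)|\le\epsilon/2$. We need this to hold with the single iteration count $n_{\epsilon/2,k}$ for all $i\le k$. This follows because the bound $\frac{2((1-\alpha)\gamma)^n u_M}{1-(1-\alpha)\gamma}\gamma^i$ is increasing in $i$ and decreasing in $n$. For $n=n_{\epsilon/2,k}$ it is therefore at most $\epsilon/2$ for every $i\le k$; equivalently, $n_{\epsilon/2,i}$ is non-decreasing in $i$, as noted after Equation~(\ref{eq:vfanta}). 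Since $\sum_{i\le k}\pi_\mu(i)\le 1$, the head term is at most $\epsilon/2$, and combining both parts gives the claim.

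The main obstacle is bookkeeping rather than substance. First, the constants in Equation~(\ref{eq:kform}) must match the tail sum exactly, including the extra factor $\gamma\rho_\mu$ from starting the sum at $k+1$ and the sign reversal of $\log_{\gamma\rho_\mu}$. Second, one must make sure that using the pointwise $M$-norm bound with $M(i)=\gamma^i$ is legitimate uniformly in $i$. If the constants fail by a factor, I would absorb it via $\gamma\rho_\mu<1$ or adjust the split between the two halves of $\epsilon$.
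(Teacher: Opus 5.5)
Your proposal is correct and follows essentially the same route as the paper: split at $k$, control the head $i\le k$ with Equation~(\ref{eq:vfanta}) at the single iteration count $n_{\epsilon/2,k}$ (using monotonicity of the bound in $i$), and control the tail via $0\le V_\alpha(i,0)-V_\alpha(i,1)\le V_\alpha(i,0)$ together with Equation~(\ref{eq:vfan2}) and the geometric sum in $\gamma\rho_\mu<1$. Your constants check out. The tail bound even carries a spare factor $\gamma\rho_\mu$, so no readjustment of the $\epsilon/2$ split is needed.
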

\begin{proof}
    The expected saved cost $s_{\alpha}(\pi_{\mu})$ is given by $ \sum_{i=0}^\infty \pi_{\mu}(i)V_{\alpha}(i,0)-\sum_{i=0}^\infty \pi_{\mu}(i)V_{\alpha}(i,1)$. Furthermore,
    $$|V_{\alpha}(i,0)-V_{\alpha,n_{\epsilon/2,k}}(i,0)|,|V_{\alpha}(i,1)-V_{\alpha,n_{\epsilon/2,k}}(i,1) | \leq \epsilon/4, \quad i\leq k, $$ 
    cf. Equation~(\ref{eq:vfanta}). 
    As $V_{\alpha,n_{\epsilon/2,k}}(i,0)\geq V_{\alpha,n_{\epsilon/2,k}}(i,1)$, we have 
    $$0\leq \sum_{i=k+1}^\infty \pi_{\mu}(i) V_{\alpha,n_{\epsilon/2,k}}(i,0)-\sum_{i=k+1}^\infty  \pi_{\mu}(i)V_{\alpha,n_{\epsilon/2,k}}(i,1)\leq \sum_{i=k+1}^\infty \pi_{\mu}(i) V_{\alpha,n_{\epsilon/2,k}}(i,0).$$  We proceed by using Equation (\ref{eq:vfan2}),
    and find
\begin{align*}
    \begin{split}
        |s_{\alpha}(\pi_{\mu})-\Tilde{s}_{\alpha,\epsilon} (\pi_{\mu})|
 &\leq \sum_{i=0}^k \pi_{\mu}(i)(|V_{\alpha}(i,0)-V_{\alpha,n_{\epsilon/2,k}}(i,0)|+|V_{\alpha}(i,1)-V_{\alpha,n_{\epsilon/2,k}}(i,1) |) \\
 &\quad+\sum_{i=k+1}^\infty \pi_{\mu}(i)V_{\alpha}(i,0)\\ 
 &\leq \frac{\epsilon( 1-\rho_{\mu})}{2}\sum_{i=0}^k \rho_{\mu}^i +(1-\rho_{\mu})\sum_{i=k+1}^\infty \rho_{\mu}^i\Big( \frac{u_M}{1-(1-\alpha)\gamma} \gamma^i\Big) \\
 &= \frac{\epsilon(1-\rho_{\mu}^{k+1})}{2}+ \frac{u_M(1-\rho_{\mu})}{1-(1-\alpha)\gamma}\cdot\sum_{i=k+1}^\infty (\rho_{\mu} \gamma)^i \\
&<\frac{\epsilon}{2}+ \frac{u_M(1-\rho_{\mu})}{1-(1-\alpha)\gamma}\cdot \frac{(\gamma \rho_{\mu})^{k+1}}{1-\gamma \rho_{\mu}}.
 \end{split}
\end{align*}
Thus for the chosen value of $k$ we find 
\begin{align*}
    \begin{split}
        |s_{\alpha}(\pi_{\mu})-\Tilde{s}_{\alpha,\epsilon} (\pi_{\mu})|
&<\frac{\epsilon}{2}+ \frac{u_M(1-\rho_{\mu})}{1-(1-\alpha)\gamma}\cdot \frac{(\gamma \rho_{\mu})^{k+1}}{1-\gamma \rho_{\mu}} \leq \frac{\epsilon}{2}+\frac{\epsilon}{2}=\epsilon.
 \end{split}
\end{align*}

\end{proof}

Note that this approximation requires the computation of value functions $V_{\alpha,m}$, for which a finite truncation suffices.



If we do not manage to find the optimal policy $\phi^\ast$ using Remark~\ref{optpoldiscount}, we are still able to determine an $\epsilon$-optimal policy.

\begin{restatable}{proposition}{coroldiscount}\label{coroldiscount}
For any $\epsilon>0$, the $n_{\epsilon/2,k}$-stage optimal policy $\phi_{n_{\epsilon/2,k}}\in \Phi$ is
$\epsilon$-optimal
for initial queue length distribution $\pi_\mu$.
\end{restatable}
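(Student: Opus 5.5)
The plan is to mirror the structure of the proof of Theorem~\ref{DiscountFind}, now applied to the policy $\phi:=\phi_{\alpha,n_{\epsilon/2,k}}$ instead of to the value approximations. We need to show
\[
s_\alpha(\pi_\mu)-s^{\phi}_\alpha(\pi_\mu)=\sum_{i=0}^\infty \pi_\mu(i)\big(V^{\phi}_\alpha(i,1)-V_\alpha(i,1)\big)\leq \epsilon .
\]
The $V_\alpha(i,0)$ terms cancel, since states $(i,0)$ have a single action. The summand is non-negative, so only an upper bound is needed. Split the sum at the truncation level $k$ from Equation~(\ref{eq:kform}) into a head $\sum_{i\le k}$ and a tail $\sum_{i>k}$.

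For the head, argue exactly as in Section~\ref{sect:ConvVIDisc}. By \cite[Theorem~5.2]{mdppractice}, the $n$-stage optimal policy satisfies
\[
\|V^{\phi_{\alpha,n}}_\alpha-V_\alpha\|_M\leq \frac{2(1-\alpha)^n\gamma^n u_M}{1-(1-\alpha)\gamma},
\]
a bound of the same form as Equation~(\ref{eq:discountboundje}), up to a factor. With $n=n_{\epsilon/2,k}$ and monotonicity of $n_{\epsilon/2,i}$ in $i$, this yields $V^{\phi}_\alpha(i,1)-V_\alpha(i,1)\leq \epsilon/2$ for all $i\le k$, which is the statement of $\epsilon$-optimality for starting states $(j,1)$ with $j\le k$ given after Equation~(\ref{eq:vfanta}). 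Hence the head contributes at most $\frac{\epsilon}{2}\sum_{i\le k}\pi_\mu(i)<\epsilon/2$. If the constant from \cite[Theorem~5.2]{mdppractice} comes out as $\epsilon$ instead of $\epsilon/2$, I would replace $n_{\epsilon/2,k}$ by a slightly larger index. The threshold monotonicity in $n$ (Proposition~\ref{decreasingthreshold}) is not needed here.

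For the tail, bound crudely $0\le V^{\phi}_\alpha(i,1)-V_\alpha(i,1)\le V^{\phi}_\alpha(i,1)$. To control $V^{\phi}_\alpha(i,1)$ in $M$-norm, use the drift property: $P^\phi M\le\gamma M$ and $\|c^\phi\|_M\le u_M$. Iterating gives
\[
V^{\phi}_\alpha\le \sum_n (1-\alpha)^n (P^\phi)^n c^\phi \le \frac{u_M}{1-(1-\alpha)\gamma}\,M ,
\]
which is the same bound as Equation~(\ref{eq:vfan2}), now valid for every stationary policy. The tail is then at most
\[
\frac{u_M(1-\rho_\mu)}{1-(1-\alpha)\gamma}\cdot\frac{(\gamma\rho_\mu)^{k+1}}{1-\gamma\rho_\mu}\le \frac{\epsilon}{2}
\]
by the choice of $k$, using $\gamma\rho_\mu<1$. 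Adding head and tail gives the claim.

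The main obstacle is the head step: making sure the per-state $\epsilon$-optimality constant from \cite[Theorem~5.2]{mdppractice} really yields $\epsilon/2$, not $\epsilon$, uniformly over $i\le k$ at the chosen iteration count. If it only yields $\epsilon$, I would absorb the gap by noting that the head bound can be strengthened to $\frac{\epsilon}{2}(1-\rho_\mu^{k+1})$ and adjusting $n$ (e.g.\ to $n_{\epsilon/4,k}$), which leaves the tail argument untouched.
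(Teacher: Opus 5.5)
Your proof is correct and follows the paper's argument. You split $\sum_i\pi_\mu(i)\bigl(V^{\phi}_\alpha(i,1)-V_\alpha(i,1)\bigr)$ at $k$, use the greedy-policy estimate from \cite[Theorem~5.2]{mdppractice} to get $\epsilon/2$ on the head, and let the choice of $k$ in Equation~(\ref{eq:kform}) make the tail at most $\epsilon/2$. Your hedge about possibly losing a factor $2$ in the head is unnecessary, since the paper uses exactly this $\epsilon/2$. The only real difference is the tail: you bound it directly by $V^{\phi}_\alpha(i,1)\le \frac{u_M}{1-(1-\alpha)\gamma}\gamma^i$ via the drift inequality. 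The paper instead uses the weighted policy error $\gamma^{i-k}\epsilon/2$ together with a w.l.o.g.\ enlargement $u_M\ge 1-(1-\alpha)\gamma$. Both reach the same expression, and your route avoids the paper's implicit step $\gamma^{-k}\epsilon/2\le 1$.
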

\begin{proof}
   We consider the expected total discounted value function $V^{\phi_{n_{\epsilon/2,k}}}_\alpha $ for $n_{\epsilon/2,k}$-stage optimal policy $\phi_{n_{\epsilon/2,k}}\in \Phi$.
    \cite[Theorem~5.2]{mdppractice} gives that 
    \begin{equation*}
        \begin{split}
             |V_{\alpha}(i,1)-V_{\alpha}^{\phi_{n_{\epsilon/2,k}}}(i,1) | &\leq \epsilon/2  \hspace{1.2cm}\text{   for  }i\leq k,\\
             |V_{\alpha}(i,1)-V_{\alpha}^{\phi_{n_{\epsilon/2,k}}}(i,1) | &\leq \gamma^{i-k}\epsilon/2   \hspace{0.45cm}\text{   for  }i>k.
        \end{split}
    \end{equation*}
    W.l.o.g. $u_M\geq 1-(1-\alpha)\gamma$. 
    Conclusively, the proof of Theorem~\ref{DiscountFind} applies to 
    $V^{\phi_{n_{\epsilon/2,k}}}_\alpha $ as well, giving 
  \begin{equation*}
\begin{split}
    &\Big|\sum_{i=0}^\infty \pi_{\mu}(i)V_\alpha(i,0)-\sum_{i=0}^\infty \pi_{\mu}(i)V^{\phi_{n_{\epsilon/2,k}}}_\alpha(i,1) -s_\alpha(\pi_{\mu}) \Big| =  \Big|\sum_{i=0}^\infty \pi_{\mu}(i)V_\alpha(i,1)-\sum_{i=0}^\infty \pi_{\mu}(i)V^{\phi_{n_{\epsilon/2,k}}}_\alpha(i,1) \Big|\\
   &\quad\quad\quad\quad \quad\quad\quad\quad \quad\quad \leq  \sum_{i=0}^k \pi_{\mu}(i)\Big|V_\alpha(i,1)-V^{\phi_{n_{\epsilon/2,k}}}_\alpha(i,1) \Big| + \sum_{i=k+1}^\infty \pi_{\mu}(i)\Big|V_\alpha(i,1)-V^{\phi_{n_{\epsilon/2,k}}}_\alpha(i,1) \Big| \\
   &\quad\quad\quad\quad\quad\quad\quad\quad \quad\quad < \frac{\epsilon}{2}+ (1-\rho_{\mu})\sum_{i=k+1}^\infty (\gamma \rho_{\mu})^i \leq \frac{\epsilon}{2}+ \frac{u_M(1-\rho_{\mu})}{1-(1-\alpha)\gamma}\cdot \frac{(\gamma \rho_{\mu})^{k+1}}{1-\gamma \rho_{\mu}}  <\epsilon.
\end{split}      
  \end{equation*}

\end{proof}

Theorem~\ref{DiscountFind} 
gives a method to approximate the expected saved cost for any discount rate.
Proposition~\ref{coroldiscount} then enables to determine an approximating policy.

The needed number of calculations explodes when $\alpha\downarrow 0$ (due to multiple influences through $\alpha$). Therefore, this theorem is not be suitable for combination with a vanishing discount approach ($\alpha\downarrow 0$) to approximate the expected saved cost for the non-discounted model.

\subsection{The non-discounted model}\label{sect:nondiscpi}

In the following section, we will approximate the expected saved cost $s(\pi_{\mu})= H(\pi_{\mu}^0)-H(\pi_{\mu}^1)$.

\begin{restatable}{theorem}{nondiscountfind}\label{NonDiscountFind}
 For $\epsilon>0$, an $\epsilon$-approximation of the expected saved cost $s(\pi_{\mu})$ is given by  $$\Tilde{s}_{\epsilon}(\pi_{\mu})= \sum_{i=0}^{\tilde{k}_\epsilon} (1-\rho_\mu)\rho_\mu^i (H^\ast(i,0)-\Tilde{H}_{\epsilon/2}(i,1)), $$ 
   where 
\begin{equation}\label{eq:kform2}
\tilde{k}_\epsilon= 
\Bigg\lceil\log_{\rho_\mu \zeta} \Bigg(\frac{\epsilon(1-\eta)(1-\rho_\mu \zeta)}{4u_M (1-\rho_\mu)}\Bigg) \Bigg\rceil,
\end{equation}
 and $\Tilde{H}_{\epsilon/2}(i,1),i=0,1,\hdots,\tilde{k}_\epsilon$ are the $\epsilon/2$-approximations as  determined in Lemma~\ref{HAppr}.
\end{restatable}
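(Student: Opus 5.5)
The proof mirrors that of Theorem~\ref{DiscountFind}. By Remark~\ref{limitsi}, $s(\pi_\mu)=\sum_{i=0}^\infty(1-\rho_\mu)\rho_\mu^i\big(H^\ast(i,0)-H^\ast(i,1)\big)$. I will split this sum at $\tilde{k}_\epsilon$ into a head ($i\le\tilde{k}_\epsilon$) and a tail ($i>\tilde{k}_\epsilon$), and bound the error of $\tilde{s}_\epsilon(\pi_\mu)$ on each part by $\epsilon/2$.

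\textbf{Head.} The values $H^\ast(i,0)=H_\mu(i)$ are known exactly from the recursion~(\ref{Hiter}). By Lemma~\ref{HAppr} with $k=\tilde{k}_\epsilon$ and accuracy $\epsilon/2$, we have $|H^\ast(i,1)-\tilde{H}_{\epsilon/2}(i,1)|\le\epsilon/2$ for $0\le i\le\tilde{k}_\epsilon$. Hence the head error is at most
\[
\frac{\epsilon}{2}\sum_{i=0}^{\tilde{k}_\epsilon}(1-\rho_\mu)\rho_\mu^i<\frac{\epsilon}{2}.
\]

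\textbf{Tail.} Here I use the $M$-norm bound~(\ref{eq:Hbound2}), $\|H^\ast\|_M\le u_M/(1-\eta)$. Since $M(i,0)=\tfrac12\zeta^i$ and $M(i,1)=\zeta^i$,
\[
|H^\ast(i,0)-H^\ast(i,1)|\le |H^\ast(i,0)|+|H^\ast(i,1)|\le \frac{3}{2}\cdot\frac{u_M}{1-\eta}\zeta^i\le \frac{2u_M}{1-\eta}\zeta^i .
\]
A sign argument as in Theorem~\ref{DiscountFind} ($H^\ast(i,0)\ge H^\ast(i,1)\ge$ something) is unnecessary here, because $H^\ast$ can be negative; the crude triangle bound suffices. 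The tail is therefore bounded by
\[
\frac{2u_M(1-\rho_\mu)}{1-\eta}\sum_{i>\tilde{k}_\epsilon}(\rho_\mu\zeta)^i=\frac{2u_M(1-\rho_\mu)}{1-\eta}\cdot\frac{(\rho_\mu\zeta)^{\tilde{k}_\epsilon+1}}{1-\rho_\mu\zeta}.
\]
This requires $\rho_\mu\zeta<1$, which holds because $\zeta<\mu/\lambda$ in the choices of Section~\ref{sect:ConvVInonDisc}. The definition~(\ref{eq:kform2}) gives $(\rho_\mu\zeta)^{\tilde{k}_\epsilon}\le\epsilon(1-\eta)(1-\rho_\mu\zeta)/(4u_M(1-\rho_\mu))$, since the logarithm base is less than $1$. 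So the tail is at most $\tfrac{\epsilon}{2}\rho_\mu\zeta<\epsilon/2$.

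Adding the two bounds gives $|s(\pi_\mu)-\tilde{s}_\epsilon(\pi_\mu)|<\epsilon$.

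The main care points are twofold. First, one must justify that $s(\pi_\mu)$ equals the series above, i.e.\ that $H^\ast$ is $\pi_\mu$-integrable; this follows from the $M$-bound and $\rho_\mu\zeta<1$. Second, the constant $4$ in $\tilde{k}_\epsilon$ must absorb both the $M(i,0)$ and the $M(i,1)$ contributions; this works because $\tfrac32\le2$.
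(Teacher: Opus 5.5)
Your proposal is correct and follows the paper's proof essentially step for step. The paper likewise splits the sum at $\tilde{k}_\epsilon$, bounds the tail by $\frac{2u_M}{1-\eta}\zeta^i$ via Equation~(\ref{eq:Hbound2}) and $\rho_\mu\zeta<1$, and bounds the head by $\epsilon/2$ using the $\epsilon/2$-approximations of Lemma~\ref{HAppr}. Your sharper factor $\tfrac32$ and your remark on $\pi_\mu$-integrability are harmless refinements of the same argument.
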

\begin{proof}
Note that $\rho_\mu \zeta <1$ (as $\zeta< \mu/\lambda$) such that $\tilde{k}_\epsilon$ is well-defined.  
 By Equation~(\ref{eq:Hbound2})
\begin{equation}\label{eq:HpiTail}
    \begin{split}
 \hspace*{-0.3cm}\sum_{i=\tilde{k}_\epsilon+1}^\infty \!\!(1-\rho_{\mu})\rho_{\mu}^i |H^\ast(i,0)-H^\ast(i,1)|   
&\leq  \sum_{i=\tilde{k}_\epsilon+1}^\infty \!\! (1-\rho_{\mu})\rho_{\mu}^i \cdot \frac{2u_M}{1-\eta} \zeta^i =\frac{2u_M (1-\rho_{\mu})}{1-\eta} \!\sum_{i=\tilde{k}_\epsilon+1}^\infty \!\!( \rho_\mu \zeta)^i \\
&=  \frac{2u_M (1-\rho_{\mu})(\rho_\mu \zeta)^{\tilde{k}_\epsilon+1}}{(1-\eta)(1-\rho_\mu \zeta)} \leq \frac{\epsilon}{2}.
    \end{split}
\end{equation}

Equation~(\ref{eq:HpiTail}) gives that
\begin{equation*}
    \begin{split}
        |s(\pi_{\mu})-\Tilde{s}_{\epsilon}(\pi_{\mu})|&\leq \sum_{i=\tilde{k}_\epsilon+1}^\infty \!\!(1-\rho_{\mu})\rho_{\mu}^i |H^\ast(i,0)-H^\ast(i,1)|+ \sum_{i=0}^{\tilde{k}_\epsilon} (1-\rho_{\mu})\rho_{\mu}^i |\Tilde{H}_{\epsilon/2}(i,1)-H^\ast(i,1)| \\
        &\leq \frac{\epsilon}{2}+ \frac{\epsilon}{2}\cdot \sum_{i=0}^{\tilde{k}_\epsilon} (1-\rho_{\mu})\rho_{\mu}^i \leq \epsilon.
    \end{split}
\end{equation*}

\end{proof}

In order to determine an $\epsilon$-optimal policy, we define the policy $\phi_0\in \Phi$ to be the policy that only specifies to use service rate $\mu$, i.e.,
$\phi_0(i,q)=\mu $ for $(i,q)\in S$. 


\begin{restatable}{proposition}{corolnondiscount}\label{corolnondiscount}
Let $\epsilon>0$ and let $\tilde{k}$ be defined as in Equation~(\ref{eq:kform2}). Let $\phi_\epsilon$ to be the policy that uses policy $\phi^{\tilde{k}}_{\epsilon/2}$ from Proposition~\ref{corolnondiscounti} for initial queue lengths $i\leq \tilde{k}$ and uses policy $\phi_0$ for initial queue lengths $i > \tilde{k}$.
Policy $\phi_\epsilon$
is $\epsilon$-optimal for initial queue length distribution $\pi_\mu$.
\end{restatable}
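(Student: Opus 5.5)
Since the initial queue length is observed at time $0$, the policy $\phi_\epsilon$ may depend on it, and the saved cost under $\phi_\epsilon$ decomposes over starting states:
\begin{equation*}
s^{\phi_\epsilon}(\pi_\mu)=\sum_{i=0}^{\tilde{k}}\pi_\mu(i)\, s^{\phi^{\tilde{k}}_{\epsilon/2}}(i)+\sum_{i=\tilde{k}+1}^\infty \pi_\mu(i)\, s^{\phi_0}(i).
\end{equation*}
Likewise $s(\pi_\mu)=\sum_i \pi_\mu(i) s(i)$, where $s(i)=H^\ast(i,0)-H^\ast(i,1)$ by Remark~\ref{limitsi}. The plan is to bound $s(\pi_\mu)-s^{\phi_\epsilon}(\pi_\mu)\geq 0$ by splitting into the head $i\le\tilde{k}$ and the tail $i>\tilde{k}$, and to show that each part contributes at most $\epsilon/2$.

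\textbf{Head.} Proposition~\ref{corolnondiscounti}, applied with $\epsilon/2$ and queue length $\tilde{k}$, gives that $\phi^{\tilde{k}}_{\epsilon/2}$ is $\epsilon/2$-optimal for starting state $(\tilde{k},1)$. Its proof relies on the $\beta$-discounted bound of \cite[Theorem~5.2]{mdppractice}, which gives $|V'_\beta(j)-{V'}^{\tilde{\phi}}_\beta(j)|\le \epsilon/2$ uniformly for all $j\le\tilde{k}$, because the $\zeta^j$ weight is largest at $j=\tilde{k}$ (as noted in that proof). So for every $j\le \tilde{k}$ we get $s(j)-s^{\phi^{\tilde{k}}_{\epsilon/2}}(j)\le\epsilon/2$. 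Summing against $\pi_\mu(j)$, which has total mass at most $1$, bounds the head by $\epsilon/2$.

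\textbf{Tail.} Under $\phi_0$ the process $Y$ behaves exactly like $X$: it uses rate $\mu$, and the loss of control does not change its dynamics. Since $\mu\in\{\mu_1,\mu_2\}$ is admissible in the controlled states, $\phi_0$ is a legitimate policy, and $s^{\phi_0}(i)=0$. The tail loss is therefore $\sum_{i>\tilde{k}}\pi_\mu(i) s(i)$. Here $0\le s(i)$ because $\phi_0$ is feasible, and $s(i)\le |H^\ast(i,0)|+|H^\ast(i,1)|\le \frac{2u_M}{1-\eta}\zeta^i$ by Equation~(\ref{eq:Hbound2}) (using $M(i,q)\le\zeta^i$). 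This is exactly the estimate in Equation~(\ref{eq:HpiTail}), which by the choice of $\tilde{k}_\epsilon$ in Equation~(\ref{eq:kform2}) is at most $\epsilon/2$.

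Adding the two bounds gives $s(\pi_\mu)-s^{\phi_\epsilon}(\pi_\mu)\le\epsilon$, which is the claim. The main obstacle is the head step. Proposition~\ref{corolnondiscounti} is stated only for the single starting state $\tilde{k}$, so I must confirm that the underlying error bound really is uniform over all $j\le\tilde{k}$. I expect this to follow from the monotonicity of $\tilde{n}_{\epsilon,k}$ in $k$, as in the discounted remark after Equation~(\ref{eq:vfanta}). I also need to confirm that identifying $H^{\phi}(\cdot,1)$ with ${V'}^{\tilde\phi}_\beta$ is valid for the fixed (non-optimal) policy, using the policy-evaluation version of the argument in the proof of Lemma~\ref{HAppr}.
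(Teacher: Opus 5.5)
Your proposal is correct and follows essentially the same route as the paper: a head/tail split in which the head uses the uniform $\epsilon/2$ guarantee of $\phi^{\tilde{k}}_{\epsilon/2}$ for all $i\le\tilde{k}$, taken from the proof of Proposition~\ref{corolnondiscounti}, and the tail is bounded by $\sum_{i>\tilde{k}}\pi_\mu(i)s(i)\le\epsilon/2$ via Equation~(\ref{eq:HpiTail}). You also make explicit two points the paper leaves implicit, namely that $\phi_0$ yields zero saved cost on the tail and why the tail estimate applies, but the argument is the same.
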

\begin{proof}
 
By virtue of the proof of Proposition~\ref{corolnondiscounti} (noting that the choice of $\tilde{n}_{\epsilon/4,\tilde{k}}$ is also sufficient for all values $0\leq i<\tilde{k}$) 
$$\mathbb{E}^{\phi^{\tilde{k}}_{\epsilon/2}}_{\delta_i^0\times\delta_i^1} \Bigg[ \sum_{n=0}^\infty \Big(c(X_n)- c(Y_n)\Big) \Bigg]
\geq s(i)-\frac{\epsilon}{2}, \quad \quad \text{ for }i\leq \tilde{k}.$$
     Note that
$\sum_{i=\tilde{k}+1}^\infty\pi_\mu(i)s(i)\leq \frac{\epsilon}{2} $. Then,

  \begin{equation*}
      \begin{split}
          \mathbb{E}^{\phi_\epsilon}_{\pi_\mu^0\times \pi_\mu^1} \Bigg[ \sum_{n=0}^\infty \Big(c(X_n)- c(Y_n) \Big)\Bigg]&=\sum_{i=0}^{\tilde{k}}\pi_\mu(i) \mathbb{E}^{\phi^{\tilde{k}}_{\epsilon/2}}_{\delta_i^0\times\delta_i^1} \Bigg[ \sum_{n=0}^\infty \Big(c(X_n)- c(Y_n)\Big) \Bigg]\\
          &\geq \sum_{i=0}^{\tilde{k}}\pi_\mu(i)(s(i)-\epsilon/2)\geq \sum_{i=0}^{\tilde{k}}\pi_\mu(i)s(i) -\frac{\epsilon}{2}\\
          &\geq \sum_{i=0}^\infty \pi_\mu(i)s(i)-\frac{\epsilon}{2}-\frac{\epsilon}{2}=s(\pi_\mu)-\epsilon.
      \end{split}
  \end{equation*}
\end{proof}

We need to remark that this $\epsilon$-optimal policy is strictly speaking not a policy, 
as the chosen policy is decided at the initialisation.


\section{Computational results}\label{sect:num}

In this section, we apply the methods discussed in this paper for the discounted and the non-discounted model 
for both linear and quadratic holding costs. 
Visualisations and results for the discounted model are given in Section~\ref{subsectdisc} and for the non-discounted model in Section~\ref{subsectnondisc}, whereas supplementary  results are given in Appendix~\ref{sect:ExtraPlots}.

We consider linear 
holding cost function $h_{lin}(i)=K_{lin}\cdot i$, and quadratic holding cost function \\
$h_{sqd}(i)=K_{sqd}\cdot i^2$. We proceed by taking a sufficient value of $u_M$ from Equations~(\ref{eq:umpol}) and (\ref{eq:umpol2}) in Appendix~\ref{App:uM} for the discounted case and non-discounted case, respectively. 
Next, we calculate $g$ explicitly as $g=\sum_{i=0}^\infty h(i)\cdot \pi_\mu(i)$ and take the relative values $H^\ast(i,0)=H_\mu(i)$ as given by Equations~(\ref{hmulin}) and (\ref{hmusqd}) in Appendix~\ref{sect:HfVer}. In summary, this gives 
\begin{equation*}
\begin{split}
    &u_M=2\Big(c_{\mu_2}+\frac{K_{lin}}{\log(\zeta)\cdot e}\Big),\\
    &g=c_\mu+\sum_{i=0}^\infty K_{lin}i\cdot\pi_{\mu}(i)=c_\mu+K_{lin}(1-\rho_{\mu})\sum_{i=0}^\infty i\cdot\rho_{\mu}^i=c_\mu+\frac{K_{lin} \rho_{\mu}}{1-\rho_{\mu}},\\
    &H^\ast(i,0)=\frac{K_{lin}i(i+1)}{2(\mu-\lambda)} ,
    \end{split}
\end{equation*}
for the linear holding cost function. For quadratic holding costs this results in
\begin{equation*}
\begin{split}
    &u_M=2\Big(c_{\mu_2}+\frac{4K_{sqd}}{\log^2(\zeta)\cdot e^2}\Big),\\
    &g=c_\mu+\sum_{i=0}^\infty K_{sqd}i^2\cdot\pi_{\mu}(i)= c_\mu+K_{sqd}(1-\rho_{\mu})\sum_{i=0}^\infty i^2\cdot\rho_{\mu}^i=c_\mu+\frac{ K_{sqd}\rho_{\mu}(1+\rho_{\mu})}{(1-\rho_{\mu})^2},\\
    &H^\ast(i,0)=\frac{K_{sqd}\cdot i(i+1)( \mu+5\lambda+2i(\mu-\lambda) )}{6(\mu-\lambda)^2} .
    \end{split}
\end{equation*}

 When $H^\ast(i,0),i\geq 0$, can not be determined explicitly, these values can be calculated iteratively using Equation~\ref{Hiter}. However, the numerical iterative computation of these values is unstable with respect to (rounding) errors. 
 Suitable solutions are to use approximations as determined in Appendix~\ref{app:Mrec} or to use Olver's method~\cite{Olver} to numerically solve the second-order difference equation.

\subsection{The discounted model}\label{subsectdisc}

First, we consider the fixed rate $\mu=\mu_1>\lambda$.
Figure~\ref{fig:plot11} contains a plot of the expected saved costs
for one specific combination of parameters.

\begin{figure}[H]
\centering
\begin{subfigure}{.5\textwidth}
  \centering
  \includegraphics[width=1\linewidth]{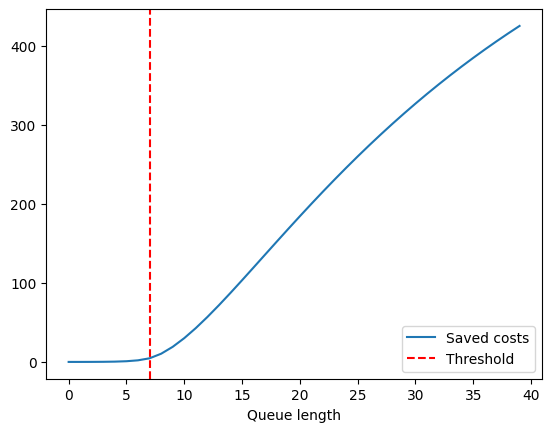}
\end{subfigure}%
\begin{subfigure}{.5\textwidth}
  \centering
  \includegraphics[width=1\linewidth]{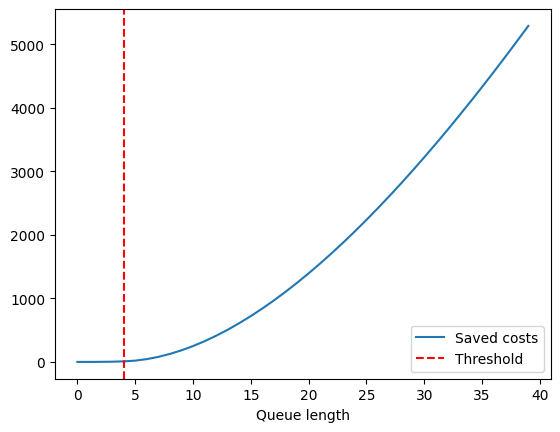}
\end{subfigure}
\caption{ Plots of (approximated) expected saved costs ($\tilde{s}_{\alpha,\epsilon}(i)$) for  $ \alpha=0.005,\beta=0.05, \lambda =0.2,\mu_1=0.35,\mu_2=0.4,c_{\mu_2}=10$, $\mu=\mu_1$ and $\epsilon=0.001$. The two cases correspond to linear holding cost function $h(i)=5\cdot i$, and quadratic holding cost function $h(i)=i^2$, respectively.}
\label{fig:plot11}
\end{figure}

We can summarise the results for several cases with linear holding costs in Table~\ref{table1}, and for cases with for quadratic holding costs in Table~\ref{table2} . We display the expected saved costs starting in distribution $\pi_{\mu}$ and the threshold of the found approximating policies. For each of these, Remark~\ref{optpoldiscount} validates that the found approximating policy is optimal.


\begin{table}[H]
\begin{center}
\caption{Approximations of the expected saved costs $s_{\alpha}(\pi_{\mu})$, thresholds  of optimal policies, values of $k$ and number of iterations for various instances with linear holding costs, $\mu=\mu_1, K_{lin}=5$ and $ \epsilon=0.001$. }
\label{table1}
\begin{tabular}{ |p{0.8cm}|p{0.8cm}|p{0.8cm}|p{0.8cm}|p{0.8cm}|p{0.8cm}|p{1.4cm}|p{1.4cm}|p{0.8cm}|p{1.3cm}|  }
 \hline
 \multicolumn{6}{|c|}{Parameters and costs} & \multicolumn{4}{|c|}{ Results}\\
 \hline
 $\alpha$&  $\beta$ & $\lambda$ & $\mu_1$ & $\mu_2$ & $c_{\mu_2}$ & $\tilde{s}_{\alpha,\epsilon}(\pi_{\mu})$ & Threshold & $k$ & Iterations \\
 \hline
 0.01   & 0.1    & 0.1&   0.35 & 0.45 & 10 &   0.0035 & 5 & 17 & 2216 \\
  0.005 &   0.1    & 0.1&   0.35 & 0.45 & 10 &  0.0073 & 5 & 18 & 4731\\
 0.01 &   0.05    & 0.2&   0.35 & 0.4 & 10 & 0.1316 & 8 & 37 & 2217\\
 0.005   &   0.05    & 0.2&   0.35 & 0.4 & 10 &  0.4642 & 7 & 40 & 4732\\
 0.01&   0.02    & 0.31&   0.33 & 0.34 & 4 &  10.711 & 11 & 333& 2229
\\
 0.005&   0.02   & 0.31&   0.33 & 0.34 & 4 & 99.825 & 5 & 355& 4745
\\
 \hline
\end{tabular}
\end{center}
\end{table}

\begin{table}[H]
\begin{center}
\caption{ Approximations of the expected saved costs $s_{\alpha}(\pi_{\mu})$, thresholds of optimal policies, values of $k$ and number of iterations for various instances with quadratic holding costs, $\mu=\mu_1, K_{sqd}=1 $ and $ \epsilon=0.001$.}
\label{table2}
\begin{tabular}{ |p{0.8cm}|p{0.8cm}|p{0.8cm}|p{0.8cm}|p{0.8cm}|p{0.8cm}|p{1.4cm}|p{1.4cm}|p{0.8cm}|p{1.3cm}|  }
 \hline
 \multicolumn{6}{|c|}{Parameters and costs} & \multicolumn{4}{|c|}{ Results}\\
 \hline
 $\alpha$&  $\beta$ & $\lambda$ & $\mu_1$ & $\mu_2$ & $c_{\mu_2}$ &  $\tilde{s}_{\alpha,\epsilon}(\pi_{\mu})$ & Threshold & $k$ & Iterations \\
 \hline
 0.01   & 0.1    & 0.1&   0.35 & 0.45 & 10 &  0.0269 & 4 & 
22& 2899\\
  0.005 &   0.1    & 0.1&   0.35 & 0.45 & 10 & 0.0383 & 4 & 24 & 6246
\\
 0.01 &   0.05    & 0.2&   0.35 & 0.4 & 10 &  4.2141 & 5 & 49& 2901
\\
 0.005   &   0.05    & 0.2&   0.35 & 0.4 & 10 & 6.3540 & 4 & 
53& 6247
\\
 0.01&   0.02    & 0.31&   0.33 & 0.34 & 10 &  642.084 & 6 & 
439& 2917
\\
 0.005&   0.02   & 0.31&   0.33 & 0.34 & 10 & 1674.233 & 4 & 
472& 6264\\
  0.01&   0.02    & 0.31&   0.33 & 0.34 & 4 &  784.338 & 2 &
439& 2917
\\
 0.005&   0.02   & 0.31&   0.33 & 0.34 & 4 & 1866.601 & 1 & 
472& 6264
\\
 \hline
\end{tabular}
\end{center}
\end{table}

For the discounted model, when $\mu_1 \approx \mu_2$, and when $c_{\mu_2}$ is not low enough, the service rate $\mu_1$ may never be used during control. For this matter, we reduced $c_{\mu_2}$ for the last combination of parameters. We observe that the threshold reduces as the discount factor is halved, as was given in Proposition~\ref{decreasingthreshold}. Furthermore, expected saved costs are more than doubled. This can be explained as the expected saved cost from reduced holding costs is affected more by the discount factor than the additional cost resulting from using the service rate $\mu_2$ (instead of $\mu=\mu_1$) during the (early) control period. Furthermore, the expected saved cost seems to be much higher when $\rho_{\mu}$ is closer to 1. This can be explained by the higher weight of $s_\alpha(i)$ in $s_\alpha(\pi_{\mu})$ for high values of $i$.   

For the fixed rate $\mu=\mu_2$, we plot the expected saved costs of the model for the same specific combination of parameters as before in Figure~\ref{fig:plot12}.

\begin{figure}[H]
\centering
\begin{subfigure}{.5\textwidth}
  \centering
  \includegraphics[width=1\linewidth]{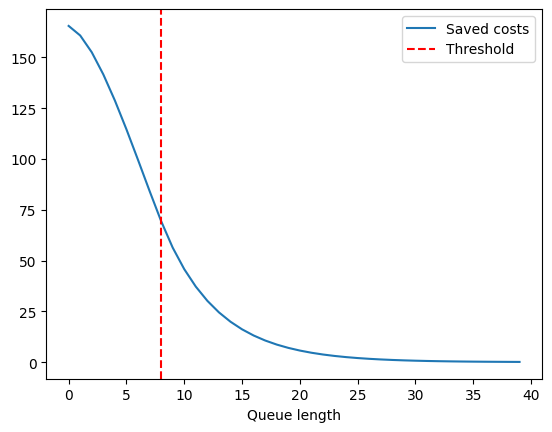}
\end{subfigure}%
\begin{subfigure}{.5\textwidth}
  \centering
  \includegraphics[width=1\linewidth]{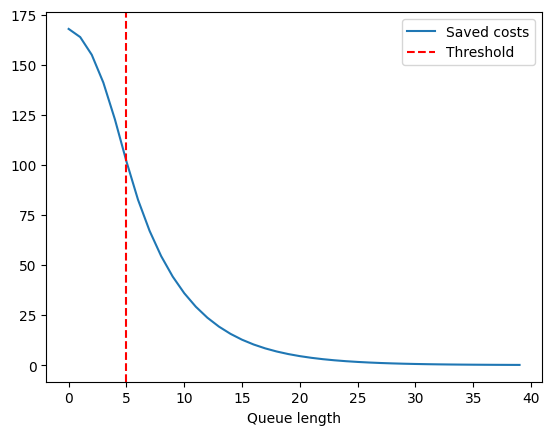}
\end{subfigure}
\caption{ Plots of (approximated) expected saved costs ($\tilde{s}_{\alpha,\epsilon}(i)$) for $\alpha=0.005, \beta=0.05, \lambda =0.2,\mu_1=0.35,\mu_2=0.4,c_{\mu_2}=10$, $\mu=\mu_2$ and $\epsilon=0.001$. The two cases correspond to linear holding cost function $h(i)=5\cdot i$, and quadratic holding cost function $h(i)=i^2$, respectively.}
\label{fig:plot12}
\end{figure}

We summarise the results for several cases in Table~\ref{table3} for linear holding costs, and Table~\ref{table4} for quadratic holding costs. For each of these cases, Remark~\ref{optpoldiscount} validates that the found approximating policy is optimal.

\begin{table}[H]
\begin{center}
\caption{Approximations of the expected saved costs $s_{\alpha}(\pi_{\mu})$, thresholds of optimal policies, values of $k$ and number of iterations for various instances with  linear holding costs, $\mu=\mu_2, K_{lin}=5$ and $  \epsilon=0.001$.}
\label{table3}
\begin{tabular}{ |p{0.8cm}|p{0.8cm}|p{0.8cm}|p{0.8cm}|p{0.8cm}|p{0.8cm}|p{1.4cm}|p{1.4cm}|p{0.8cm}|p{1.3cm}|  }
 \hline
 \multicolumn{6}{|c|}{Parameters and costs} & \multicolumn{4}{|c|}{ Results}\\
 \hline
 $\alpha$&  $\beta$ & $\lambda$ & $\mu_1$ & $\mu_2$ & $c_{\mu_2}$ & $\tilde{s}_{\alpha,\epsilon}(\pi_{\mu})$ & Threshold & $k$ & Iterations \\
 \hline
 0.01   & 0.1    & 0.1&   0.35 & 0.45 & 10 &   87.313 & 7 & 
14& 2216
\\
  0.005 &   0.1    & 0.1&   0.35 & 0.45 & 10 & 90.952 & 6 & 
15& 4731\\
 0.01 &   0.05    & 0.2&   0.35 & 0.4 & 10 & 146.977 & 10 & 
30& 2217\\
 0.005   &   0.05    & 0.2&   0.35 & 0.4 & 10 &  158.037 & 8 & 
32& 4731
\\
 0.01&   0.01    & 0.33&   0.3 & 0.36 & 10 &  66.858 & 2 & 
239& 2225\\
 0.005&   0.01   & 0.33&   0.3 & 0.36 & 10 &  75.489 & 1 & 
255& 4741
\\
 \hline
\end{tabular}
\end{center}
\end{table}

\begin{table}[H]
\begin{center}
\caption{Approximations of the expected saved costs $s_{\alpha}(\pi_{\mu})$, thresholds of optimal policies, values of $k$ and number of iterations for various instances with quadratic holding costs, $\mu=\mu_2,  K_{sqd}=1 $ and $ \epsilon=0.001$.}
\label{table4}
\begin{tabular}{ |p{0.8cm}|p{0.8cm}|p{0.8cm}|p{0.8cm}|p{0.8cm}|p{0.8cm}|p{1.4cm}|p{1.4cm}|p{0.8cm}|p{1.3cm}|  }
 \hline
 \multicolumn{6}{|c|}{Parameters and costs} & \multicolumn{4}{|c|}{ Results}\\
 \hline
 $\alpha$&  $\beta$ & $\lambda$ & $\mu_1$ & $\mu_2$ & $c_{\mu_2}$ &  $\tilde{s}_{\alpha,\epsilon}(\pi_{\mu})$ & Threshold & $k$ & Iterations \\
 \hline
 0.01   & 0.1    & 0.1&   0.35 & 0.45 & 10 &   89.780 & 5& 
19  & 2899
 \\
  0.005 &   0.1    & 0.1&   0.35 & 0.45 & 10 &  93.581 & 5 & 
20& 6245
\\
 0.01 &   0.05    & 0.2&   0.35 & 0.4 & 10 &  148.285 & 5 & 
40& 2900\\
 0.005   &   0.05    & 0.2&   0.35 & 0.4 & 10 & 159.545 & 5 & 
43& 6246
\\
 0.01&   0.01    & 0.33&   0.3 & 0.36 & 10 & 49.377 & 1 & 
315& 2912
\\
 0.005&   0.01   & 0.33&   0.3 & 0.36 & 10 &  55.741 & 0 &
339& 6259
\\
 \hline
\end{tabular}
\end{center}
\end{table}

 Again, we observe Proposition~\ref{decreasingthreshold} in practice.
 When the discount factor is halved, the expected saved costs have not increased by such a substantial fraction. This can be explained as additional saved cost from reduced holding costs is affected more by the discount factor than the reduced cost resulting from using the service rate $\mu_1$ (instead of $\mu=\mu_2$) during the (early) control period. We have included one combination of parameters $ \mu=\mu_2>\lambda \geq \mu_1$. Even though the threshold of the found optimal threshold policy is very low, the expected saved cost can still be substantial.

For figures with graphs of approximations of the expected saved costs $s_\alpha(i)$ for each of the remaining cases in Tables~\ref{table1}, \ref{table2}, \ref{table3} and \ref{table4} with $\alpha=0.005$ we refer to Appendix~\ref{sect:ExtraPlots1}.

\subsection{The non-discounted model}\label{subsectnondisc}

First, we consider the fixed rate $\mu=\mu_1>\lambda$.
For one specific combination of parameters, we plot the expected saved costs in Figure~\ref{fig:plot21}.

\begin{figure}[H]
\centering
\begin{subfigure}{.5\textwidth}
  \centering
  \includegraphics[width=1\linewidth]{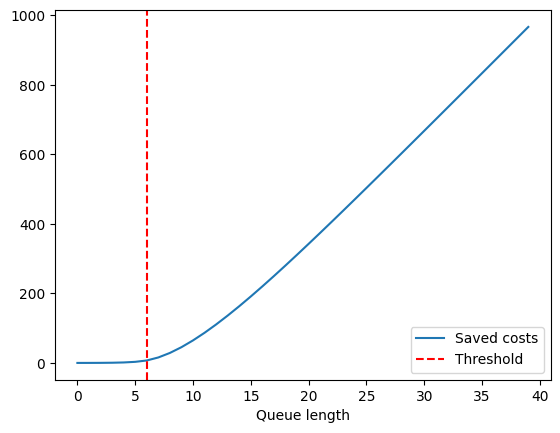}
\end{subfigure}%
\begin{subfigure}{.5\textwidth}
  \centering
  \includegraphics[width=1\linewidth]{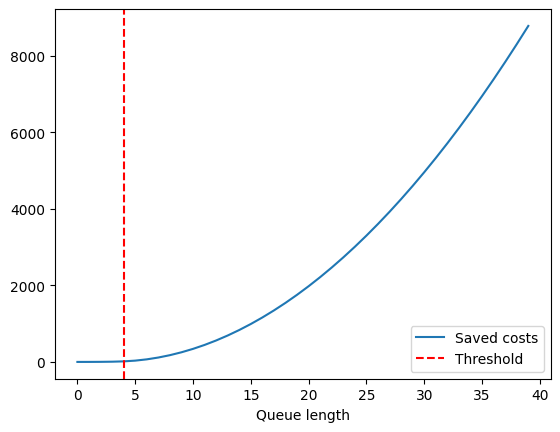}
\end{subfigure}
    \caption{ Plots of (approximated) expected saved costs ($\tilde{s}_\epsilon(i)$) for $\beta=0.05, \lambda =0.2,\mu_1=0.35,\mu_2=0.4,c_{\mu_2}=10, \mu=\mu_1$ and $\epsilon=0.001$. The two cases correspond to linear holding cost function $h(i)=5\cdot i$, and quadratic holding cost function $h(i)=i^2$, respectively.}
\label{fig:plot21}
\end{figure}

We now summarise the results for the linear holding cost cases in Table~\ref{table5} and the quadratic holding cost cases in Table~\ref{table6}. We now display the expected saved costs $s(\pi)$, the threshold of the found approximating policies, the values of $k$ and the number of iterations. Now it is Remark~\ref{optpolnondiscount} that validates that the found approximating policies are optimal. 

\begin{table}[H]
\begin{center}
\caption{Approximations of the expected saved costs $s(\pi_{\mu})$, thresholds of optimal policies, values of $k$ and number of iterations for various instances with linear holding costs, $\mu=\mu_1,  K_{lin}=5 $ and $ \epsilon=0.001$.}
\label{table5}
\begin{tabular}{ |p{0.8cm}|p{0.8cm}|p{0.8cm}|p{0.8cm}|p{0.8cm}|p{1.5cm}|p{1.5cm}|p{0.8cm}|p{1.4cm}|  }
 \hline
 \multicolumn{5}{|c|}{Parameters and costs} & \multicolumn{4}{|c|}{ Results}\\
 \hline
  $\beta$ & $\lambda$ & $\mu_1$ & $\mu_2$ & $c_{\mu_2}$ & $\tilde{s}_\epsilon(\pi_{\mu})$ & Threshold & $k$ & Iterations \\
 \hline
  0.1    & 0.1&   0.35 & 0.45 & 10 &  0.0119 & 5 & 
  15& 
  254\\
   0.05    & 0.2&   0.35 & 0.4 & 10 &  1.2817 & 6 & 
   35& 
   974\\
     0.02    & 0.31&   0.33 & 0.34 & 10 &  1469.872 & 4 & 
471&  
6050\\
  0.02    & 0.31&   0.33 & 0.34 & 4 &  1709.399 & 1 & 
  470 & 
  6042\\
 \hline
\end{tabular}
\end{center}
\end{table}

\begin{table}[H]
\begin{center}
\caption{Approximations of the expected saved costs $s(\pi_{\mu})$, thresholds of optimal policies, values of $k$ and number of iterations for various instances with quadratic holding costs, $\mu=\mu_1,  K_{sqd}=1 $ and $ \epsilon=0.001$.}
\label{table6}
\begin{tabular}{ |p{0.8cm}|p{0.8cm}|p{0.8cm}|p{0.8cm}|p{0.8cm}|p{1.5cm}|p{1.5cm}|p{0.8cm}|p{1.4cm}|  }
 \hline
 \multicolumn{5}{|c|}{Parameters and costs} & \multicolumn{4}{|c|}{ Results}\\
 \hline
  $\beta$ & $\lambda$ & $\mu_1$ & $\mu_2$ & $c_{\mu_2}$ & $\tilde{s}_\epsilon(\pi_{\mu})$ & Threshold & $k$ & Iterations \\
 \hline
  0.1    & 0.1&   0.35 & 0.45 & 10 & 0.052 & 4 &
  16& 
  282\\
   0.05    & 0.2&   0.35 & 0.4 & 10 & 9.917 & 4& 
39 & 
1079\\
  0.02    & 0.31&   0.33 & 0.34 & 10 & 23307.671 & 0& 
  526
& 
6734\\
  0.02    & 0.31&   0.33 & 0.34 & 4 & 23587.750 & 0& 
  526 &
  6734\\
 \hline
\end{tabular}
\end{center}
\end{table}

 As before, the expected saved cost seems to be much higher when $\rho_\mu$ is closer to 1. The threshold for the case $\beta=0.05, \lambda =0.2,\mu=\mu_1=0.35,\mu_2=0.4,c_{\mu_2}=10$ with linear holding costs was infinite for discounts $\alpha=0.01$ and $\alpha=0.005$ (and therefore had 0 saved cost). Interestingly, in the non-discounted case, the threshold decreased from infinity to 4 and the saved cost increased from 0 to the high value $1469.872$. Also for quadratic holding costs there was the largest decrease in threshold and a very high increase in saved costs for this set of parameters. We suspect that this is due to the low value of $\beta$ and the increase of the saved cost to be assisted by the high value of $\rho_\mu$.

For the fixed rate $\mu=\mu_2$, we display a graph in Figure~\ref{fig:plot22} with a plot of the expected saved costs for the same specific combination of parameters as before.

\begin{figure}[H]
\centering
\begin{subfigure}{.5\textwidth}
  \centering
  \includegraphics[width=1\linewidth]{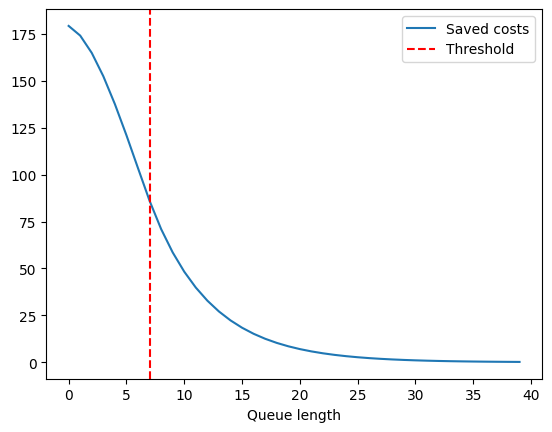}
\end{subfigure}%
\begin{subfigure}{.5\textwidth}
  \centering
  \includegraphics[width=1\linewidth]{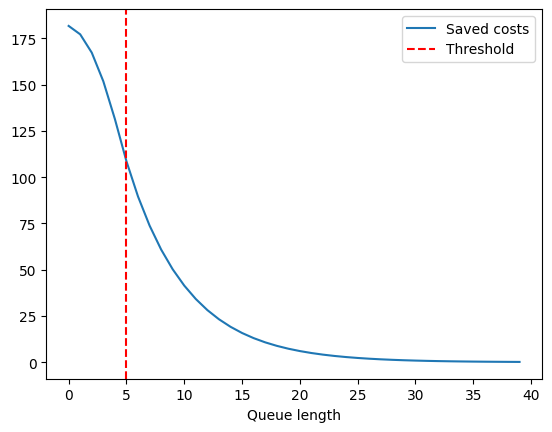}
\end{subfigure}
\caption{Plots of (approximated) expected saved costs ($\tilde{s}_\epsilon(i)$) for $ \beta=0.05, \lambda =0.2,\mu_1=0.35,\mu_2=0.4,c_{\mu_2}=10,\mu=\mu_2$ and $\epsilon=0.001$. The two cases correspond to linear holding cost function $h(i)=5\cdot i$, and quadratic holding cost function $h(i)=i^2$, respectively.}
\label{fig:plot22}
\end{figure}

We summarise the results for linear holding costs in Table~\ref{table7}  and for quadratic holding costs in Table~\ref{table8}. Again, Remark~\ref{optpolnondiscount} validates that the found approximating policies are optimal.

\begin{table}[H]
\begin{center}
\caption{Approximations of the expected saved costs $s(\pi_{\mu})$, thresholds of optimal policies, values of $k$ and number of iterations for various instances with linear holding costs, $\mu=\mu_2 , K_{lin}=5 $ and $ \epsilon=0.001$.}
\label{table7}
\begin{tabular}{ |p{0.8cm}|p{0.8cm}|p{0.8cm}|p{0.8cm}|p{0.8cm}|p{1.5cm}|p{1.5cm}|p{0.8cm}|p{1.4cm}|  }
 \hline
 \multicolumn{5}{|c|}{Parameters and costs} & \multicolumn{4}{|c|}{ Results}\\
 \hline
  $\beta$ & $\lambda$ & $\mu_1$ & $\mu_2$ & $c_{\mu_2}$ &  $\tilde{s}_\epsilon(\pi_{\mu})$ & Threshold & $k$ & Iterations \\
 \hline
  0.1    & 0.1&   0.35 & 0.45 & 10 &   94.912 & 6 & 12 & 
  250\\
   0.05    & 0.2&   0.35 & 0.4 & 10 & 170.946 & 7 & 
   28& 
   955\\
  0.02    & 0.31&   0.33 & 0.34 & 10 & 129.325 & 5 & 
  280
& 
5362\\
  0.01    & 0.33&   0.3 & 0.36 & 10 &  83.333 & 0 & 
  285
& 
10663\\
 \hline
\end{tabular}
\end{center}
\end{table}

\begin{table}[H]
\begin{center}
\caption{Approximations of the expected saved costs $s(\pi_{\mu})$, thresholds of optimal policies, values of $k$ and number of iterations for various instances with quadratic holding costs, $\mu=\mu_2 , K_{sqd}=1 $ and $ \epsilon=0.001$.}
\label{table8}
\begin{tabular}{ 
|p{0.8cm}|p{0.8cm}|p{0.8cm}|p{0.8cm}|p{0.8cm}|p{1.5cm}|p{1.5cm}|p{0.8cm}|p{1.4cm}|  }
 \hline
 \multicolumn{5}{|c|}{Parameters and costs} & \multicolumn{4}{|c|}{ Results}\\
 \hline
  $\beta$ & $\lambda$ & $\mu_1$ & $\mu_2$ & $c_{\mu_2}$ &  $\tilde{s}_\epsilon(\pi_{\mu})$ & Threshold & $k$ & Iterations \\
 \hline
  0.1    & 0.1&   0.35 & 0.45 & 10 &  97.721 & 5 & 
  13& 
  277\\
   0.05    & 0.2&   0.35 & 0.4 & 10 & 172.615 & 5 & 30
& 
1052\\
  0.02    & 0.31&   0.33 & 0.34 & 10 & 54.391 & 1 & 
  314
& 
5980
 \\
  0.01    & 0.33&   0.3 & 0.36 & 10 & 83.333 & 0 & 
  328
& 
12191\\
 \hline
\end{tabular}
\end{center}
\end{table}

When the fixed service rate is $\mu_2$, we observe smaller differences between discounted and non-discounted costs. This is because costs are saved in the earlier stages of the process (and higher holding costs occur later in the process). 
Also, we observe that the saved costs are equal between cases with the same parameters but different holding costs when the threshold is zero for both cases. This is because the saved cost is exactly the cost saved by using $\mu_1$ in state $(0,1)$ instead of using $\mu_2$ at cost $c_{\mu_2}$.

\bibliographystyle{amsrefs}
\bibliography{refs}

\appendix

\section{Equality of total saved cost for the continuous and discrete time models}\label{App:MuEE}

In this section we show the equivalence of Equations~(\ref{questionMathematicallyDiscounted}) and (\ref{questionMathematicallyDiscounted2}) for zero discounts. As there is equivalence for non-zero discounts, we only need to show that 
\begin{equation}\label{questionMathematicallyDiscounted3}
   \mathbb{E}^\phi_{p^0 \times p^1} \Bigg[ \int_0^\infty \Big( c(X(t))- c(Y(t))\Big) dt\Bigg]=\lim_{\alpha\downarrow 0}
    \mathbb{E}^\phi_{p^0 \times p^1} \Bigg[ \int_0^\infty e^{-\alpha t}\Big( c(X(t))- c(Y(t))\Big) dt\Bigg] .\end{equation}  
For this matter, we note that \cite{FloskeRecurrence} with Theorem~\ref{convergenceVIM} gives that the continuous time model is $M$-exponentially ergodic. This means that there exist constants $K_1,K_2,\nu> 0$ such that
\begin{equation*}
    \begin{split}    \Bigg|\mathbb{E}^\phi_{p_1}\Bigg[\int_{t=0}^\infty \Big(c(X_t)-\pi_\mu \cdot c\Big)   dt\Bigg]\Bigg|&\leq \int_{t=0}^\infty \Big|P^\phi_t\cdot p_1-\pi_\mu\Big|^{\top} \cdot c\    dt\leq  K_1\int_{t=0}^\infty \Big|P^\phi_t\cdot p_1-\pi_\mu\Big|^{\top} \cdot M\Big|   dt \\
    &\leq K_2\cdot \int_{t=0}^\infty e^{-\nu t}  dt<\infty.
    \end{split}
\end{equation*}
Together with a similar derivation for the total cost difference between starting distributions $p_0$ and $\pi_\mu$ and the dominated convergence theorem, Equation~(\ref{questionMathematicallyDiscounted3}) can be derived.

\section{Upper bound $u_M$}\label{App:uM}

In this section we want to determine an upper bound $u_M$ of $ \sup_{\phi} ||c^\phi||_M$.  
In the discounted case we have $0< \alpha < 1$ and $M$ defined by $M(i,q)=\gamma^i$ with $1<\gamma<1/(1-\alpha)$ for $(i,q)\in S$. Thus,  $$ \sup_{\phi} ||c^\phi||_M =\sup_{(i,q), \phi} \frac{|c^\phi(i,q)|}{M(i,q)} \leq \sup_{i} \frac{|c_{\mu_2}+h(i)|}{\gamma^i}.$$ 

For the non-discounted case, the function $M$ is defined by $M(i,0)= \cdot \zeta^i/2, M(i,1)= \zeta^i,$ with $\zeta\in \mathbb{R}_{\geq 1}$ for $i\in \mathbb{N}_0$. As such, $M$ is
bounded by $M(i,q)\geq \zeta^i/2$  for $(i,q)\in S$. This means that  
$$ \sup_{\phi} ||c^\phi||_M =\sup_{(i,q), \phi} \frac{|c^\phi(i,q)|}{M(i,q)} \leq 2 \sup_{i} \frac{|c_{\mu_2}+h(i)|}{\zeta^i} .$$ Thus, the expression for $u_M$ in the non-discounted case will be taken as twice the value of $u_M$ from the discounted case with $\gamma=\zeta$. Therefore, we will consider the value of $u_M$ for the discounted case in the rest of this section.

We will give $u_M$ as a function of $\gamma$.
 By Assumption~\ref{holdingassump} we can bound the holding costs by $h(i)\leq K\cdot 2^{i^{\epsilon}}$, with $0<\epsilon<1$ and $K\in \mathbb{R}$. 
 We can bound \begin{eqnarray*}
   \sup_{\phi} ||c^\phi||_M =\sup_{(i,q), \phi} \frac{|c^\phi(i,q)|}{M(i,q)} \leq  \sup_{i} \frac{|c_{\mu_2}+h(i)|}{\gamma^i} \leq c_{\mu_2} + K\cdot \sup_{i} \frac{ 2^{i^\epsilon}}{\gamma^i}.
\end{eqnarray*} 
To find $\sup_{i} 2^{i^\epsilon}\gamma^{-i}$, we look at the derivative of the real function $  2^{x^\epsilon}\gamma^{-x}$ with respect to $x$ for $x>0$. We use the product rule to find
\begin{equation*}
    \frac{d}{dx} \frac{ 2^{x^\epsilon}}{\gamma^x}=  \frac{ \frac{d}{dx}2^{x^\epsilon}}{\gamma^x}+2^{x^\epsilon}\cdot\frac{d}{dx}\frac{ 1}{\gamma^x}=\frac{\epsilon x^{\epsilon-1}\log(2)2^{x^\epsilon}}{\gamma^x}+\frac{2^{x^\epsilon}\cdot (-\log(\gamma))}{\gamma^x}= \frac{2^{x^\epsilon}(\epsilon x^{\epsilon-1}\log(2)-\log(\gamma))}{\gamma^x}.
\end{equation*}
This derivative is zero when
\begin{equation*}
    \epsilon x^{\epsilon-1}\log(2)-\log(\gamma)=0\Longleftrightarrow x^{1-\epsilon}=\frac{ \epsilon \log(2)}{\log(\gamma)} \Longleftrightarrow  x=\Big(\frac{ \epsilon \log(2)}{\log(\gamma)}\Big)^{1/(1-\epsilon)}.
\end{equation*}
As $  \epsilon x^{\epsilon-1}\log(2)-\log(\gamma)$ is decreasing in $x$ for $x>0$, we note that the derivative of $  2^{x^\epsilon}\gamma^{-x}$ is positive for $0<x< ( \epsilon \log(2)/\log(\gamma))^{1/(1-\epsilon)}$ and negative for $x> ( \epsilon \log(2)/\log(\gamma))^{1/(1-\epsilon)}$.
Therefore, the maximal value of $2^{x^\epsilon}\gamma^{-x}$ is attained at $x= ( \epsilon \log(2)/\log(\gamma))^{1/(1-\epsilon)}$. Thus,
 \begin{equation*}
   \sup_{i} \frac{2^{i^\epsilon}}{\gamma^i}\leq \frac{2^{\Big(\frac{ \epsilon \log(2)}{\log(\gamma)}\Big)^{\epsilon/(1-\epsilon)}}}{\gamma^{\Big(\frac{ \epsilon \log(2)}{\log(\gamma)}\Big)^{1/(1-\epsilon)}}}, \ \ \ \text{yielding an upper bound of } \ u_M=c_{\mu_2}+K\cdot \frac{2^{\Big(\frac{ \epsilon \log(2)}{\log(\gamma)}\Big)^{\epsilon/(1-\epsilon)}}}{\gamma^{\Big(\frac{ \epsilon \log(2)}{\log(\gamma)}\Big)^{1/(1-\epsilon)}}}.
 \end{equation*}
  A tighter bound on $h$ gives a tighter upper bound $u_M$ that grows slower when $\gamma\downarrow 1$. The result will be tighter convergence guarantees for VI and our approximations of the expected saved costs.
That is why we also consider finding a sufficient $u_M$ for polynomially bounded holding costs, i.e., $h(i)\leq  K\cdot i^d$ with $d\in \mathbb{N}$. Now,
 \begin{eqnarray*}
   \sup_{\phi} ||c^\phi||_M \ \leq c_{\mu_2} + K\cdot \sup_{i} \frac{ i^d}{\gamma^i}.
\end{eqnarray*} 
The derivative of $\frac{x^d}{\gamma^x}$ for $x>0$ is given by
\begin{equation*}
    \frac{d}{dx} \frac{ x^d}{\gamma^x}=  \frac{ \frac{d}{dx}  x^{d}}{\gamma^x}+x^d\cdot\frac{d}{dx}\frac{ 1}{\gamma^x}=\frac{dx^{d-1}}{\gamma^x}+\frac{x^d\cdot (-\log(\gamma))}{\gamma^x}= \frac{x^{d-1}(d-x\log(\gamma))}{\gamma^x}.
\end{equation*}
This derivative is zero when $x\log(\gamma)=d \Leftrightarrow x=d/\log(\gamma)$, positive for $0<x<d/\log(\gamma)$ and negative for $x>d/\log(\gamma)$. Thus, as $\gamma^{1/\log (\gamma)}=e$,
\begin{equation*}\label{eq:supje}
     \sup_{i} \frac{i^d}{\gamma^i}\leq \frac{\Big(\frac{d}{\log(\gamma)}\Big)^d}{\gamma^{\frac{d}{\log(\gamma)}}}=\frac{\Big(\frac{d^d}{\log^d(\gamma)}\Big)}{e^d}=\frac{d^d}{\log^d(\gamma)e^d}.
\end{equation*}
This bound yields sufficient upper bound 
\begin{equation}\label{eq:umpol}
u_M=c_{\mu_2}+\frac{K\cdot d^d}{\log^d(\gamma)\cdot e^d}   . 
\end{equation}

Applying the substitution of $\gamma$ by $\zeta$ and multiplication by 2 gives the following value of $u_M$ for the non-discounted case with polynomially bounded holding costs 
\begin{equation}\label{eq:umpol2}
u_M=2\Big(c_{\mu_2}+\frac{K\cdot d^d}{\log^d(\zeta)\cdot e^d} \Big)  . 
\end{equation}

\section{The expected total discounted cost for a specific policy}\label{sect:specpol}

Let $\phi \in \Phi$ be a given policy. We will determine an $\epsilon$-approximation of the expected saved cost (or extra cost) by using policy $\phi$ and starting with control instead of never having control. We do this by analysing the expected cost difference during control and the expected cost difference after losing control, as not only the sum, but both parts themselves can be of interest.

For the states $(i,1)$, we let $\tau_0$ be the time step where queueing control is lost (note that $\tau_0$ is independent of the queue length). We let $\{X_n\}_{n\in \mathbb{N}_0}$ be the MDP using policy $\phi\in \Phi$.

As such, we can write \begin{equation}
     V^\phi_{\alpha}(i,1)= \mathbb{E}_{(i,1)}^\phi \Big[ \sum_{n=0}^\infty (1-\alpha)^n c(X_n) \Big]=\mathbb{E}_{(i,1)}^\phi \Big[ \sum_{n=0}^{\tau_0-1} (1-\alpha)^n c(X_n)+\sum_{n=\tau_0}^{\infty} (1-\alpha)^n c^\phi(X_n) \Big] .\label{split0}
\end{equation}

We split the expected value of Equation (\ref{split0}) in two parts and through conditioning on $\tau_0$, we find
\begin{align}
\begin{split}
   \mathbb{E}_{(i,1)}^\phi \Big[ \sum_{n=0}^{\tau_0-1} (1-\alpha)^n c(X_n)\Big] &= \sum_{k=0}^\infty \mathbb{P}(\tau_0=k+1)\sum_{n=0}^{k} (1-\alpha)^n \mathbb{E}_{(i,1)}^\phi \Big[ c(X_n) \ |\ \tau_0=k+1\Big]\\
   &= \sum_{k=0}^\infty \mathbb{P}(\tau_0=k+1)\sum_{n=0}^{k} (1-\alpha)^n \mathbb{E}_{(i,1)}^\phi \Big[ c(X_n) \ |\ \tau_0\geq n+1\Big]\\
   &=\sum_{k=0}^\infty \Big( \sum_{n=k}^\infty \mathbb{P}(\tau_0 = n+1) \Big)  (1-\alpha)^{k} \mathbb{E}_{(i,1)}^\phi \Big[ c(X_{k} ) \ |\ \tau_0\geq k+1\Big]\\
 &=\sum_{k=0}^\infty \mathbb{P}(\tau_0\geq k+1) (1-\alpha)^{k} \mathbb{E}_{(i,1)}^\phi \Big[ c(X_{k} ) \ |\ \tau_0\geq k+1\Big]\\
 &=\sum_{k=0}^\infty ((1-\beta) (1-\alpha))^{k} \mathbb{E}_{(i,1)}^\phi \Big[ c(X_{k} ) \ |\ \tau_0\geq k+1\Big].
 \end{split}
 \label{split1}
\end{align}

We observe that the final sum of Equation (\ref{split1}) corresponds with the discrete time queueing control model with discount $1-(1-\beta)(1-\alpha)=\beta+\alpha-\alpha \beta$, and transition probabilities that are scaled by $1-\beta$, as we condition on queuing control not getting lost.
As such, for $\phi$ with $\phi(i,1)=a$, the non-zero elements of transition matrix ${P^\ast}^{\phi}$ are given by

$$ {p^\ast}^{\phi}_{(i,1),(j,1)} = \begin{cases} \frac{\lambda}{1-\beta} \ \ \ \ \  
&\text{for } j=i+1,\\
\frac{a}{1-\beta} \mathds{1}_{\{i>0\}} \ \ \ \ \ & \text{for } j=i-1,\\
1-\frac{\lambda+a\mathds{1}_{\{i>0\}}}{1-\beta} \ \ \ \ \ &  \text{for } j=i.\end{cases}$$

The expected (discounted) cost of this model can be denoted as ${V^\ast}^\phi_{(\alpha+\beta-\alpha \beta)}(i)$. We can approximate this value as Assumption~\ref{assump2alpha} is also satisfied here, in the same way as the original discounted model satisfies Assumption~\ref{assump2alpha} with $M(i,q)=\gamma^i$.
The current equivalent of Equation~(\ref{eq:vfan1}) now gives
\begin{equation} \label{eq:specdisc}
     |{V^\ast}^\phi_{(\alpha+\beta-\alpha \beta)}(i)-{V^\ast}^\phi_{(\alpha+\beta-\alpha \beta),n}(i)| \leq \frac{(1-(\alpha+\beta-\alpha \beta))^n\gamma^n u_M}{1-(1-(\alpha+\beta-\alpha \beta))\gamma} \cdot \gamma^i.
\end{equation}

Then, we consider the second part of Equation (\ref{split0}) and, conditioning on $\tau_0$, we find
\begin{align*}
\begin{split}
   \mathbb{E}_{(i,1)}^\phi \Big[\sum_{n=\tau_0}^{\infty} (1-\alpha)^n c(X_n) \Big]&= \sum_{k=1}^\infty \mathbb{P}(\tau_0=k)(1-\alpha)^k \sum_{n=0}^\infty  (1-\alpha)^n\mathbb{E}^\phi_{(i,1)}\Big[ c(X_{k+n} ) \ |\ \tau_0=k \Big]\\
   &= \sum_{k=1}^\infty \beta (1-\beta)^{k-1} (1-\alpha)^k \sum_{n=0}^\infty (1-\alpha)^n \mathbb{E}^\phi_{(i,1)}\Big[ c(X_{k+n} ) \ |\ \tau_0=k \Big].
\end{split}
\end{align*}
We can consider the probability distribution over queue length at time $\tau_0$ as $\pi^{\ast,\phi,k}_i$ where \\
$\pi^{\ast,\phi,0}_i(j)= \mathds{1}_{i=j} $ and
$\pi^{\ast,\phi,k}_i(j)=({P^\ast}^{\phi})^{k-1}_{i,j}$ for $k\geq 1$. Then,

\begin{align*}
\begin{split}
 & \mathbb{E}_{(i,1)}^\phi \Big[\sum_{n=\tau_0}^{\infty} (1-\alpha)^n c(X_n ) \Big]= \sum_{k=1}^\infty \beta (1-\beta)^{k-1} (1-\alpha)^k \sum_{n=0}^\infty (1-\alpha)^n \mathbb{E}^\phi_{(i,1)}\Big[ c(X_{k+n}) \ |\ \tau_0=k \Big] \\
  & \hspace{4.2cm}=   \sum_{k=1}^\infty \beta (1-\beta)^{k-1} (1-\alpha)^k \sum_{n=0}^\infty (1-\alpha)^n \sum_{j \in \N_0} \pi^{\phi,\ast,k}_i(j)  \mathbb{E}^\phi_{(j,0)}\Big[ c(X_{n} )  \Big] \\
  &\hspace{4.2cm}= \sum_{k=1}^\infty \beta (1-\beta)^{k-1} (1-\alpha)^k \sum_{j \in \N_0} \pi^{\phi,\ast,k}_i(j) \sum_{n=0}^\infty \mathbb{E}^{\phi_0}_{(j,0)}\Big[  (1-\alpha)^n c(X_{n})\Big] \\
  &\hspace{4.2cm}=  \sum_{k=1}^\infty \beta (1-\beta)^{k-1} (1-\alpha)^k \sum_{j \in \N} \pi^{\phi,\ast,k}_i(j) V_{\alpha}(j,0) .
\end{split}
\end{align*}
With notation $ V_{\alpha}(\pi,0)=\sum_{i\in \mathbb{N}_0} \pi(i) V_{\alpha}(i,0)$, we can write
\begin{align*}
     \sum_{k=1}^\infty \beta (1-\beta)^{k-1} (1-\alpha)^k \sum_{j \in \N} \pi^{\ast,\phi,k}_i(j) V_{\alpha}(j,0) = \sum_{k=1}^\infty \beta (1-\beta)^{k-1} (1-\alpha)^k  V_{\alpha}(\pi^{\ast,\phi,k}_i,0) .
\end{align*}
This is not a tractable sum. As such, we need to bound the tail. We can do this with a bound on $V_{\alpha}$.
We note that for $\phi_0$ being the policy only using $\mu$, we have that
$ \inf_{\phi} V_\alpha^{\phi}(i,1)\leq V_\alpha^{\phi_0}(i,1)= V_\alpha(i,0)$.\\
By Equation (\ref{eq:vfan2}),
\begin{align*}
     V_{\alpha}(\pi^{\ast,\phi,k}_i,0) \leq   V_{\alpha}(i+k,0) \leq  \frac{u_M \gamma^{i+k} }{1-(1-\alpha)\gamma} .
\end{align*}
Looking at the tail of $ \mathbb{E}_{(i,1)}^\phi \Big[\sum_{n=\tau_0}^{\infty} (1-\alpha)^n c(X_n) \Big]= \sum_{k=1}^\infty \beta (1-\beta)^{k-1} (1-\alpha)^k  V_{\alpha}(\pi^{\ast,\phi,k}_i,0)$ we get

\begin{equation}\label{eq:spectail}
\begin{split}
    &\sum_{k=n}^\infty \beta (1-\beta)^{k-1} (1-\alpha)^k  V_{\alpha}(\pi^{\ast,\phi,k}_i,0) \leq  \sum_{k=n}^\infty \beta (1-\beta)^{k-1} (1-\alpha)^k \frac{u_M \gamma^{i+k} }{1-(1-\alpha)\gamma} \\
    &\hspace{3cm}= \frac{u_M \beta \gamma^i }{ 1-(1-\alpha)\gamma} \sum_{k=n}^\infty  (1-\beta)^{k-1} (1-\alpha)^k  \leq  \frac{u_M \beta \gamma^i}{1-(1-\alpha)\gamma} \sum_{k=n}^\infty  (1-\beta)^{k-1} \\
   & \hspace{3cm}= \frac{u_M \beta \gamma^i}{1-(1-\alpha)\gamma}\cdot \frac{(1-\beta)^{n-1}}{\beta} = \frac{u_M \gamma^i (1-\beta)^{n-1} }{1-(1-\alpha)\gamma}.
    \end{split}
\end{equation}

The bounds from  Equations~(\ref{eq:specdisc}) and (\ref{eq:spectail}) yield an $\epsilon$-approximation of $V^\phi_{\alpha}(i,1)$.



\begin{proposition} \label{thmspecpolicy}
We can $\epsilon$-approximate $V^\phi_{\alpha}(i,1)$ 
 by \begin{equation*}
    \Tilde{V}^\phi_{\alpha, \epsilon}(i,1)= {V^\ast}^\phi_{(\alpha+\beta-\alpha \beta),n^\ast_{\epsilon,i}}(i)+ \sum_{k=1}^{\ell^\ast_{\epsilon,i}} \beta (1-\beta)^{k-1} (1-\alpha)^k  V_{\alpha}(\pi^{\ast,\phi,k}_i,0),
\end{equation*}
with \begin{equation*}
    n_{\epsilon,i}^\ast=\bigg\lceil\log_{(1-(\alpha+\beta-\alpha \beta))\gamma}\bigg( \frac{1-(1-(\alpha+\beta-\alpha \beta))\gamma}{u_M \gamma^i}\bigg)\bigg\rceil,
\end{equation*}
and \begin{equation*}
    \ell_{\epsilon,i}^\ast=\bigg\lceil\log_{1-\beta}\bigg( \frac{1-(1-\alpha)\gamma}{u_M \gamma^i}\bigg)\bigg\rceil+1.
\end{equation*}
 
\end{proposition}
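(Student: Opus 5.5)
The proof splits $V^\phi_\alpha(i,1)$ as in Equation~(\ref{split0}) into the part accumulated before control is lost at time $\tau_0$ and the part accumulated afterwards. The target is to show that $\Tilde{V}^\phi_{\alpha,\epsilon}(i,1)$ approximates each part with error at most $\epsilon/2$. The triangle inequality then gives $|V^\phi_\alpha(i,1)-\Tilde{V}^\phi_{\alpha,\epsilon}(i,1)|\leq\epsilon$.

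For the first part, Equation~(\ref{split1}) identifies this expectation with ${V^\ast}^\phi_{(\alpha+\beta-\alpha\beta)}(i)$. This is the value of the conditioned single-class model with discount $\alpha+\beta-\alpha\beta$ and transition matrix ${P^\ast}^\phi$. I would verify Assumption~\ref{assump2alpha} for this model with $M(i)=\gamma^i$, exactly as in Section~\ref{sect:ConvVI}:
\begin{itemize}
\item $M$ is a $(\gamma,\Phi)$-drift function for ${P^\ast}^\phi$, because each row is a birth--death step;
\item $(1-\beta)(1-\alpha)\gamma<1$ holds, since $(1-\alpha)\gamma<1$;
\item $\|c^\phi\|_M\leq u_M$.
\end{itemize}
Equation~(\ref{eq:specdisc}) then follows, with VI restricted to the fixed policy $\phi$. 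Plugging $n=n^\ast_{\epsilon,i}$ into the right-hand side of (\ref{eq:specdisc}), I would check that the logarithm's argument makes the bound at most $\epsilon/2$. The displayed formula for $n^\ast_{\epsilon,i}$ appears to have dropped the factor $\epsilon/2$ inside the logarithm, so I would reinstate it, writing $\frac{\epsilon(1-(1-(\alpha+\beta-\alpha\beta))\gamma)}{2u_M\gamma^i}$. Since the base of the logarithm is less than $1$, taking the ceiling gives a bound of at most $\epsilon/2$.

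For the second part, the conditioning computation preceding the proposition expresses it as the series $\sum_{k\geq1}\beta(1-\beta)^{k-1}(1-\alpha)^kV_\alpha(\pi^{\ast,\phi,k}_i,0)$. Truncating this series at $\ell^\ast_{\epsilon,i}$ leaves the tail from $k=\ell^\ast_{\epsilon,i}+1$ onward. Equation~(\ref{eq:spectail}) bounds that tail by $\frac{u_M\gamma^i(1-\beta)^{\ell^\ast_{\epsilon,i}}}{1-(1-\alpha)\gamma}$. The key ingredients are:
\begin{itemize}
\item $V_\alpha(\cdot,0)$ is non-decreasing by Proposition~\ref{increasingconvexvalues};
\item after $k-1$ conditioned steps the queue length is at most $i+k$;
\item Equation~(\ref{eq:vfan2}) then bounds $V_\alpha(\pi^{\ast,\phi,k}_i,0)$.
\end{itemize}
Choosing $\ell^\ast_{\epsilon,i}$ with the argument $\frac{\epsilon(1-(1-\alpha)\gamma)}{2u_M\gamma^i}$ (again reinstating the $\epsilon/2$), plus one to absorb the index shift, makes this tail at most $\epsilon/2$.

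The main obstacle is the tail estimate in (\ref{eq:spectail}): one must check that the index bookkeeping between $k$, $k-1$ and the truncation point matches $\ell^\ast_{\epsilon,i}$. It also needs some care that the finite sum in $\Tilde{V}^\phi_{\alpha,\epsilon}(i,1)$ uses the exact values $V_\alpha(\pi^{\ast,\phi,k}_i,0)$. If these values are themselves replaced by VI approximations, I would budget a further fraction of $\epsilon$ using Equation~(\ref{eq:vfan1}). The bound $|V_\alpha-V_{\alpha,n}|\leq C\gamma^{i+k}(1-\alpha)^n\gamma^n$ is summable against $\beta(1-\beta)^{k-1}(1-\alpha)^k$, so this costs only an adjusted choice of $n$.
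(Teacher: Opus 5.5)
Your proposal is correct and follows the paper's proof: split $V^\phi_\alpha(i,1)$ as in Equation~(\ref{split0}), bound the VI error of the conditioned model by Equation~(\ref{specdisc}) and the truncated tail of the post-$\tau_0$ series by Equation~(\ref{spectail}), each by $\epsilon/2$. Reinstating the factor $\epsilon/2$ inside both logarithms is warranted. The displayed formulas for $n^\ast_{\epsilon,i}$ and $\ell^\ast_{\epsilon,i}$ omit $\epsilon$, whereas the paper's proof uses exactly the $\epsilon/2$ bounds you derive.
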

\begin{proof}
 Equation~(\ref{eq:specdisc}) gives that
$ |{V^\ast}^\phi_{(\alpha+\beta-\alpha \beta)}(i)-{V^\ast}^\phi_{(\alpha+\beta-\alpha \beta),n_\epsilon^\ast}(i)| \leq \epsilon/2$ and Equation~(\ref{eq:spectail}) gives that $ \sum_{k=\ell_\epsilon^\ast}^\infty \beta (1-\beta)^{k-1} (1-\alpha)^k  V_{\alpha}(\pi^{\ast,\phi,k}_i,0) \leq \epsilon/2$. Combining the two parts of Equation~(\ref{split0}) gives that $|\Tilde{V}^\phi_{\alpha,\epsilon}(i,1) - V^\phi_{\alpha}(i,1)|\leq \epsilon/2+\epsilon/2 =\epsilon $.

\end{proof}

\section{Verification of  $M$-uniform geometric recurrence}\label{app:Mrec}

In this section, we will show that there exist $\zeta>1$, and $\eta<1$, such that 
the function $M$ defined by $$M(i,0)=\frac{1}{2} \cdot \zeta^i, M(i,1)= \zeta^i,\quad i=0,1,2,\ldots$$ 
satisfies Eqn.~(\ref{eq:M}), for  $D=\{(0,0)\}$, i.o.w., such that $\sum_{y\not=(0,0)}p^\phi_{xy}M(y)\leq \eta M(x)$, $x\in S$.

First consider states $x=(i,0)\in S$ with $i\geq 2$. Then,
\begin{eqnarray*}
\lefteqn{\hspace*{-4cm} 
\sum_{ y\not=(0,0)}p^\phi_{xy}M(y)= \mu M(i-1,0)+(1-\lambda-\mu)M(i,0)+\lambda M(i+1,0)}\\[-8pt]
 &=&\frac{1}{2}\zeta^i \cdot \Big(\frac{\mu}{\zeta}+(1-\lambda-\mu)+\lambda \zeta\Big)\\
    &=&\Big(\frac{\mu}{\zeta}+(1-\lambda-\mu)+\lambda \zeta\Big)M(i,0).
\end{eqnarray*}
For notational convenience, we write $f_\mu(\zeta):=\frac{\mu}{\zeta}+(1-\lambda-\mu)+\lambda \zeta$.

Clearly, for state $x=(1,0)$, $\sum_{y\not=(0,0)}p^\phi_{(1,0)y}M(y)\leq f_{\mu}(z) M(1,0)$, as we take the summation over less states.
Hence, for Eqn.~(\ref{eq:M}) to hold for states $(i,0)$, $i\geq1$, we need that
$f_\mu(\zeta)<1$, for some $\zeta>1$. It is easily verified, that 
\begin{equation}
\label{eq: cond01}
f_\mu(\zeta)<1, \quad \mbox{for } \zeta\in(1,\mu/\lambda).
\end{equation}
Subsequently,  
for initial state $(0,0)$, we get
$$
\sum_{y\not=(0,0)}p^\phi_{(0,0)y}M(y)=\lambda  M(1,0)=\lambda\cdot\zeta M(0,0),
$$
so that we need
\begin{equation}
\label{eq: cond02}
\zeta< \frac{1}{\lambda}.
\end{equation}
Next,   using  $\mu_1<\mu_2$ and increasingness of $M(i,1)$ in $i$  for states $(i,1)$, $i\geq 1$,  yields
\begin{eqnarray} \label{eq: cond2}
\lefteqn{\hspace*{-1cm}  \sum_{y\not=(0,0)}p^\phi_{(i,1)y}M(y) }\nonumber\\[-5pt]
&=&\phi(i,1) M(i-1,1)+\beta M(i,0)+(1-\lambda-\beta-\phi(i,1))M(i,1)+\lambda M(i+1,1)\nonumber\\ 
    &\leq& \mu_1M(i-1,1)+\beta M(i,0)+(1-\lambda-\beta-\mu_1)M(i,1)+\lambda M(i+1,1)\nonumber\\
    &=&\Big(\frac{\mu_1}{\zeta}+\frac{1}{2}\beta +(1-\lambda-\beta-\mu_1)+\lambda \zeta\Big)\cdot\zeta^i\nonumber\\
    &=&\Big(\frac{\mu_1}{\zeta} +(1-\lambda-\frac{1}{2}\beta-\mu_1)+\lambda \zeta\Big)M(i,1)=\Big (f_{\mu_1}(\zeta)-\frac{\beta}{2}\Big) M(i,1)\leq f_{\mu_1}(\zeta) M(i,1).
\end{eqnarray}
By virtue of Eqn.~(\ref{eq: cond01}),  $f_{\mu_1}(\zeta)<1$, for $\zeta\in (1,\mu_1/\lambda)$, if $\mu_1>\lambda$.

Suppose, that $\mu_1\leq \lambda <\mu_2=\mu$. 
Then,  the function $ \zeta\rightarrow f_{\mu_1}(\zeta)-\beta/2$ has a minimum at $\zeta=\sqrt{{\mu_1}/{\lambda}}<1$, and is smaller than 1 at $\zeta=1$. 
Computing the two roots of the equation $f_{\mu_1}(\zeta)=1$ thus yields, that $f(\mu_1)(\zeta)-\frac{\beta}{2}<1$ for $\mu_1\leq \lambda$, whenever 
\begin{equation}
\label{eq: cond3}
 \zeta \in\Bigg(1, \frac{(\lambda+\frac{1}{2}\beta+\mu_1)+\sqrt{(\lambda+\frac{1}{2}\beta+\mu_1)^2-4\lambda \mu_1}}{2\lambda}\Bigg).
\end{equation}
The final case to consider is state $x=(0,1)$, for which
\begin{eqnarray*}
\sum_{y\not=(0,0)}p_{(0,1)y}M(y)=(1-\lambda-\beta)M(0,1)+\lambda M(1,1) = (1-\lambda-\beta+\lambda \zeta)M(0,1),
\end{eqnarray*}
so that we need
\begin{equation}
\label{eq: cond4}
\zeta< 1+\frac{\beta}{\lambda}.
\end{equation}
Summarising, combining (\ref{eq: cond01}), (\ref{eq: cond02}), (\ref{eq: cond2}),  (\ref{eq: cond3}) and (\ref{eq: cond4}), yields, that Eqn.~(\ref{eq:M}) holds 
when $\mu_1>\lambda$, for $\zeta$ satisfying
$$
1<\zeta < \frac{\mu_1}{\lambda}\wedge \big(1+\frac{\beta}{\lambda}\big),
$$
and when $\mu_1\leq \lambda<\mu=\mu_2$, for $\zeta$ satisfying
$$
1<\zeta< \big(1+\frac{\beta}{\lambda}\big)\wedge \frac{\mu}{\lambda}\wedge \frac{(\lambda+\frac{1}{2}\beta+\mu_1)+\sqrt{(\lambda+\frac{1}{2}\beta+\mu_1)^2-4\lambda \mu_1}}{2\lambda},
$$
and, given $\zeta$, for $\eta$ satisfying 
$$
\eta= \big(1-\lambda -\beta+\lambda\zeta\big) \vee f_{\mu}(\zeta)\vee \big( f_{\mu_1}(\zeta)-\frac{\beta}{2}\big).
$$

\section{Proofs with $M$-uniform geometric recurrence}\label{sect:MProofs}

An alternative way to approximate $s(i)$ (and $s(\pi_\mu)$ implicitly through the use of the different approximations) is to use $M$-uniform geometric recurrence, instead of using the equivalence to a $\beta-$discounted MDP.  The weakness of $M$-uniform geometric recurrence in practice for these approximations occurs through a high number of iterations when $\eta$ must be close to 1. When this problem does not occur, the method is effective. Furthermore, we also consider this method to be of theoretical interest.

We now will give equivalent statements of Lemma~\ref{HAppr} and  Proposition~\ref{corolnondiscounti}, which we then prove using $M$-uniform geometric recurrence.
The proofs of Theorem~\ref{NonDiscountFind} and Proposition~\ref{corolnondiscount} are only affected implicitly through the changes in  Lemma~\ref{HAppr} and  Proposition~\ref{corolnondiscounti}.

\begin{restatable}{lemma}{HApproximate}\label{HAppr2}
  Let $\epsilon>0,k\in \mathbb{N}_0$,
 we can determine $\epsilon$-approximations \\
$\tilde{H}_\epsilon(i,1)=v_{\Bar{n}_{\epsilon,k}}(i,1),0\leq i \leq k$ of the values $H^\ast(0,1),H^\ast(1,1),\dots,H^\ast(k,1)$. 
Here, vectors $v_n,n\in \mathbb{N}_0$ are iteratively constructed with $v_0\equiv 0$ and 
\begin{equation}
    \begin{split} \label{eq:vnvect}
        v_{n+1}=\min_{\phi \in \Phi} \Big\{c^\phi-g^\ast+\prescript{}{(0,0)}P^\phi v_n \Big\}, 
    \end{split}
\end{equation}
and $\Bar{n}_{\epsilon,k}$ is given by
\begin{equation}
\begin{split}
\Bar{n}_{\epsilon,k}=\Bigg\lceil \log_\eta \Bigg(\frac{\epsilon\cdot \zeta^{-k}(1-\eta)}{u_M}\Bigg) \Bigg\rceil.
\end{split}
\end{equation}

\end{restatable}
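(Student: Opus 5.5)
The plan is to treat the recursion (\ref{eq:vnvect}) as a contraction in the weighted norm $\|\cdot\|_M$, with contraction factor $\eta$, and to show that its fixed point is exactly $H^\ast$.

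\emph{Step 1: the operator.} Define
\[
T v=\min_{\phi\in\Phi}\bigl\{c^\phi-g^\ast\boldsymbol{1}+\prescript{}{(0,0)}P^\phi v\bigr\},
\]
so that $v_{n+1}=Tv_n$. From the derivation preceding Equation~(\ref{eq:Hbound2}), $H^\ast$ satisfies $H^\ast=TH^\ast$. That derivation uses $H^\ast(0,0)=0$, which gives $P^\phi H^\ast=\prescript{}{(0,0)}P^\phi H^\ast$ for every $\phi$, together with the DAOE (\ref{DAOE}) from Theorem~\ref{convergenceVIM}. Thus $H^\ast$ is a fixed point of $T$ in $\ell^\infty(S,M)$.

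\emph{Step 2: contraction.} Take $v,w\in\ell^\infty(S,M)$. A minimum of affine maps differs by at most the supremum of the differences, so for each $x\in S$
\[
|Tv(x)-Tw(x)|\le \sup_\phi \sum_{y\neq(0,0)}p^\phi_{xy}|v(y)-w(y)|\le \|v-w\|_M\sup_\phi\sum_{y\notin D}p^\phi_{xy}M(y)\le \eta\|v-w\|_M M(x).
\]
The last inequality is Equation~(\ref{eq:M}) with $D=\{(0,0)\}$, as verified in Appendix~\ref{app:Mrec}. Hence $\|Tv-Tw\|_M\le\eta\|v-w\|_M$. Since $\sup_\phi\|c^\phi-g^\ast\boldsymbol 1\|_M\le u_M$, the operator $T$ maps $\ell^\infty(S,M)$ into itself. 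By Banach's fixed point theorem, $H^\ast$ is its unique fixed point there, and $v_n\to H^\ast$ in $\|\cdot\|_M$.

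\emph{Step 3: the rate.} I would use the a priori bound $\|H^\ast\|_M\le u_M/(1-\eta)$ from (\ref{eq:Hbound2}). With $v_0\equiv0$, the contraction gives
\[
\|v_n-H^\ast\|_M\le\eta^n\|v_0-H^\ast\|_M\le \frac{\eta^n u_M}{1-\eta}.
\]
Alternatively, $\|v_1-v_0\|_M\le u_M$ gives the same bound through the geometric series. Since $M(i,1)=\zeta^i\le\zeta^k$ for $i\le k$, it follows that $|v_n(i,1)-H^\ast(i,1)|\le \eta^n u_M\zeta^k/(1-\eta)$. For $n\ge\bar n_{\epsilon,k}$ we have $\eta^n\le \epsilon\zeta^{-k}(1-\eta)/u_M$, because $\log_\eta$ is decreasing. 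The error is then at most $\epsilon$, which proves the statement.

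\emph{Main obstacle.} The delicate point is the careful justification of Step 1 together with well-definedness. One must check that the minimum in $T$ is attained componentwise and that $T$ really maps $M$-bounded functions to $M$-bounded functions. This uses that the action sets are finite and that the sums converge, which follows from $P^\phi M\le$ const$\cdot M$. One must also use the uniqueness part of Theorem~\ref{convergenceVIM} to identify the fixed point with $H^\ast$, rather than with some other solution of the DAOE. The remaining estimates are routine.
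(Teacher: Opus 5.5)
Your proposal is correct and is essentially the paper's proof. The paper obtains the same bound $\|H^\ast-v_n\|_M\le u_M\eta^n/(1-\eta)$ by iterating the two one-sided comparisons $H^\ast-v_{n+1}\le \prescript{}{(0,0)}P^{\phi^{n+1}}(H^\ast-v_n)$ and $v_{n+1}-H^\ast\le \prescript{}{(0,0)}P^{\phi^\ast}(v_n-H^\ast)$, which is your $\eta$-contraction estimate written out with explicit minimising policies, and then uses $\|H^\ast\|_M\le u_M/(1-\eta)$ and $M(i,1)\le\zeta^k$ for $i\le k$ exactly as you do (note that uniqueness of the fixed point is not actually needed, since $H^\ast=TH^\ast$ alone gives $\|T^n0-T^nH^\ast\|_M\le\eta^n\|H^\ast\|_M$).
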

\begin{proof}
     

Let $\phi^{n+1}$ denote the minimising policy in step $n+1$ of Equation~(\ref{eq:vnvect}). Equation~(\ref{eq:Hbound2}) gives that
 $$H^\ast-v_{n+1} \leq  c^{\phi^{n+1}}-g^\ast+\prescript{}{(0,0)}P^{\phi^{n+1}} H^\ast -(c^{\phi^{n+1}}-g^\ast+\prescript{}{(0,0)}P^{\phi^{n+1}} v_n) = \prescript{}{(0,0)}P^{\phi^{n+1}}(H^\ast-v_n),$$ and therefore 
$$H^\ast -v_n \leq  \Big(\Pi_{k=1}^n 
\ \prescript{}{(0,0)}P^{\phi^n} \Big)H^\ast \leq  \Big(\Pi_{k=1}^n 
\ \prescript{}{(0,0)}P^{\phi^n} \Big)\cdot \frac{u_M}{1-\eta} M \leq \frac{u_M \cdot \eta^n}{1-\eta} M.$$ 
Likewise, $$ v_{n+1}-H^\ast \leq c^{\phi^\ast}-g^\ast+\prescript{}{(0,0)}P^{\phi^\ast} v_n -(c^{\phi^\ast}-g^\ast+\prescript{}{(0,0)}P^{\phi^\ast} H^\ast) = \prescript{}{(0,0)}P^{\phi^\ast}(v_n-H^\ast),  $$ such that $$ v_{n}-H^\ast \leq ( \prescript{}{(0,0)}P^{\phi^\ast})^n |H^\ast| \leq ( \prescript{}{(0,0)}P^{\phi^\ast})^n\frac{u_M}{1-\eta} M  \leq  \frac{u_M \cdot \eta^n}{1-\eta} M.$$
Thus, we can conclude 
\begin{equation} \label{eq:vnapprox}
    ||H^\ast -v_n||_M\leq \frac{u_M \cdot \eta^n}{1-\eta} .
\end{equation}

For $0\leq i\leq k,q\in \{0,1\}$ we note that $M(i,q)\leq M(k,1)=\zeta^k$,
 hence,
Equation~(\ref{eq:vnapprox}) gives that $$|H^\ast(i,q)-v_{\Bar{n}_{\epsilon,k}}(i,q)|\leq  \frac{u_M \cdot \eta^{\Bar{n}_{\epsilon,k}} \cdot \zeta^k}{1-\eta}\leq \epsilon.$$
\end{proof}

When applying  Lemma~\ref{HAppr2}, it is also necessary to provide an alternative to Proposition~\ref{corolnondiscounti}. 

\begin{restatable}{proposition}{proponondiscounti}\label{corolnondiscounti2}
    Let $\epsilon>0,i\in \mathbb{N}_0$.
    The policy $\phi^{\hat{n}_{\epsilon,i}+1}\in \Phi$
    with
    $$ \hat{n}_{\epsilon,i}= 
    \Bigg\lceil \log_{\eta}\Bigg( \frac{\epsilon (1-\eta)}{2\zeta^i(u_M+2g^\ast)} \Bigg) \Bigg \rceil $$
    is $\epsilon$-optimal 
   for initial queue length $i\in \mathbb{N}_0$.
\end{restatable}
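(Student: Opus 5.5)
The idea is to compare the relative value function $H^{\phi}$ of the fixed policy $\phi:=\phi^{\hat n+1}$ (with $\hat n=\hat{n}_{\epsilon,i}$) with $H^\ast$ through the taboo representation. As in Proposition~\ref{corolnondiscounti}, $s(i)=H^\ast(i,0)-H^\ast(i,1)$. Both $\phi$ and $\phi^\ast$ act as $\mu$ on $\{(j,0)\}$, so $H^{\phi}(\cdot,0)=H^\ast(\cdot,0)=H_\mu$ and $g^\phi=g^\ast=g_\mu$. It therefore suffices to show $0\le H^{\phi}(i,1)-H^\ast(i,1)\le\epsilon$. For a fixed policy, the taboo argument preceding Equation~(\ref{eq:Hbound2}) gives
\[
H^{\phi}=\sum_{m\ge0}(\prescript{}{(0,0)}P^{\phi})^m(c^{\phi}-g^\ast\boldsymbol 1),
\]
which converges in $M$-norm by Equation~(\ref{eq:M}), since $\|\prescript{}{(0,0)}P^{\phi}\|_M\le\eta$. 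This $H^{\phi}$ is the unique $M$-bounded solution of $H^{\phi}=c^{\phi}-g^\ast+\prescript{}{(0,0)}P^{\phi}H^{\phi}$.

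The key step is a one-step Bellman residual bound. Write $T^\phi v=c^\phi-g^\ast+\prescript{}{(0,0)}P^\phi v$ and $Tv=\min_\psi T^\psi v$, so that $v_{n+1}=Tv_n=T^{\phi}v_{\hat n}$ with $n=\hat n$. Then
\[
H^{\phi}-H^\ast=T^{\phi}H^{\phi}-TH^\ast=\prescript{}{(0,0)}P^{\phi}(H^{\phi}-H^\ast)+(T^{\phi}H^\ast-TH^\ast).
\]
For the residual, use $T^\phi v_{\hat n}=Tv_{\hat n}$:
\[
T^{\phi}H^\ast-TH^\ast=\bigl(T^{\phi}H^\ast-T^{\phi}v_{\hat n}\bigr)+\bigl(Tv_{\hat n}-TH^\ast\bigr).
\]
Each bracket is bounded in $M$-norm by $\eta\|H^\ast-v_{\hat n}\|_M$, using Equation~(\ref{eq:M}) and the fact that the min of affine maps is a contraction in that norm. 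Equation~(\ref{eq:vnapprox}) then gives
\[
\|T^{\phi}H^\ast-TH^\ast\|_M\le 2\eta\,\frac{u_M\eta^{\hat n}}{1-\eta}.
\]
Iterating the identity above yields
\[
\|H^{\phi}-H^\ast\|_M\le\frac{1}{1-\eta}\cdot\frac{2u_M\eta^{\hat n+1}}{1-\eta}.
\]
Evaluating at $(i,1)$ multiplies this by $M(i,1)=\zeta^i$. Nonnegativity of $H^\phi-H^\ast$ follows from optimality of $H^\ast$ as the minimal solution.

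The step I expect to be the obstacle is matching the constant in $\hat n_{\epsilon,i}$, which has a single factor $(1-\eta)$ and $u_M+2g^\ast$ in place of $2u_M/(1-\eta)$. To recover the extra $(1-\eta)$ I would avoid the crude Neumann bound. Instead I would use that $H^{\phi}-H^\ast$ is supported only on $\{(j,1)\}$, and that the taboo chain restricted to the control states leaves them at rate $\beta$ per step. I would also bound the residual directly by $\|c^{\phi}-g^\ast\|_M\le u_M+g^\ast$ plus the $g^\ast$ term, which is where $u_M+2g^\ast$ should originate, with the geometric factor $\eta^{\hat n}$ coming from the $\hat n$ taboo steps in $v_{\hat n}$. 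If the exact constant does not come out, the argument still yields $\epsilon$-optimality with a slightly larger explicit $\hat n$, and I would adjust the logarithm's argument accordingly.
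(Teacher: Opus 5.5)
Your argument is sound as far as it goes, but it does not prove the proposition with the stated $\hat n_{\epsilon,i}$. That value is part of the statement, and replacing it by a larger one, as you suggest at the end, proves a weaker result.

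\textbf{Where the factor is lost.} You bound the residual $T^{\phi}H^\ast-TH^\ast$ through $\|H^\ast-v_{\hat n}\|_M\le u_M\eta^{\hat n}/(1-\eta)$. That bound already carries one factor $1/(1-\eta)$, inherited from $\|H^\ast\|_M\le u_M/(1-\eta)$. The Neumann series for $\prescript{}{(0,0)}P^{\phi}$ then adds a second such factor. You end with $\zeta^i\,2u_M\eta^{\hat n+1}/(1-\eta)^2$. With the stated $\hat n_{\epsilon,i}$ this is only guaranteed to be at most $\epsilon\,\eta u_M/\bigl((1-\eta)(u_M+2g^\ast)\bigr)$. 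That exceeds $\epsilon$ as soon as $\eta u_M>(1-\eta)(u_M+2g^\ast)$, in particular when $\eta$ is close to $1$, which is exactly the regime the paper cares about.

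\textbf{Why your sketched repairs do not close it.} Restricting to the control states and using the exit rate $\beta$ does not remove the second Neumann factor. It replaces it by one of the same kind, roughly $1/\beta$ for $\zeta$ near $1$, since the contraction factor on $\{(j,1)\}$ is then about $1-\beta$. Bounding the residual directly by $\|c^{\phi}-g^\ast\|_M$ loses the decay $\eta^{\hat n}$ altogether.

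\textbf{The missing idea.} Pivot on $v_{\hat n}$ rather than on $H^\ast$, and use the geometric decay of successive VI differences.
\begin{enumerate}
\item Since $T^{\phi}v_{\hat n}=v_{\hat n+1}$, you have $H^{\phi}-v_{\hat n}=\prescript{}{(0,0)}P^{\phi}(H^{\phi}-v_{\hat n})+(v_{\hat n+1}-v_{\hat n})$. Hence $\|H^{\phi}-v_{\hat n}\|_M\le\|v_{\hat n+1}-v_{\hat n}\|_M/(1-\eta)$.
\item The sandwich $\prescript{}{(0,0)}P^{\phi^{m+1}}(v_m-v_{m-1})\le v_{m+1}-v_m\le\prescript{}{(0,0)}P^{\phi^{m}}(v_m-v_{m-1})$ gives $\|v_{m+1}-v_m\|_M\le\eta\|v_m-v_{m-1}\|_M$.
\item Therefore $\|v_{\hat n+1}-v_{\hat n}\|_M\le\eta^{\hat n}\|v_1\|_M\le\eta^{\hat n}(u_M+2g^\ast)$, using $M\ge 1/2$. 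This is where $u_M+2g^\ast$ comes from. The bound involves $\|v_1-v_0\|_M$ rather than $\|H^\ast\|_M$, so no second factor $1/(1-\eta)$ appears.
\item Combine this with Equation~(\ref{eq:vnapprox}) through $|H^\ast-H^{\phi}|\le|H^\ast-v_{\hat n}|+|v_{\hat n}-H^{\phi}|$. For the stated $\hat n_{\epsilon,i}$ each term is at most $\frac{\epsilon}{2}\zeta^{-i}$ in $M$-norm.
\item Evaluating at $(i,1)$, where $M(i,1)=\zeta^i$, gives $|H^\ast(i,1)-H^{\phi}(i,1)|\le\epsilon$.
\end{enumerate}

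The rest of your setup carries over unchanged: the identification $H^{\phi}(\cdot,0)=H^\ast(\cdot,0)$, $g^{\phi}=g^\ast$, and $s(i)=H^\ast(i,0)-H^\ast(i,1)$. You do not need nonnegativity of $H^{\phi}-H^\ast$; the two-sided bound suffices.
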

\begin{proof}
    We define $v'_{\hat{n}_{\epsilon,i}}:=v_{\hat{n}_{\epsilon,i}}$ and take policy $\phi^{\hat{n}_{\epsilon,i}+1}$ minimising Equation~(\ref{eq:vnvect}). 
    We define $H'$ as the relative value vector of the model resulting from VI when we restrict $\Phi$ to $\{\phi^{\hat{n}_{\epsilon,i}+1}\}$ with $H'(0,0)=0$. 
Theorem~\ref{convergenceVIM} applies, such that, equivalently to Equation~(\ref{eq:Hbound2}) also $| |H'||_M\leq u_M/(1-\eta) $.
Note that 
\begin{equation} \label{eq:Hdiff}
    ||H^\ast(i,1)-H'(i,1)||_M \leq ||H^\ast(i,1)-v_n(i,1)||_M+||H'(i,1)-v'_n(i,1)||_M.
\end{equation} 
Equation~(\ref{eq:vnapprox}) gives that 
\begin{equation}\label{eq:Hdiff1}
    ||H^\ast(i,1)-v_n(i,1)||_M \leq \frac{u_M \cdot \eta^n }{1-\eta}\leq \frac{\epsilon}{2} \cdot \zeta^{-i}. 
\end{equation}

For $m\geq \hat{n}_{\epsilon,i}$, we iteratively define 
\begin{equation}
    \begin{split} \label{eq:vnvect2}        v'_{m+1}=c^{\phi_{i,\epsilon}}-g^\ast+\prescript{}{(0,0)}P^{\phi_{i,\epsilon}} v'_m .
    \end{split}
\end{equation}
We note that $H'=c^{\phi_{i,\epsilon}}-g^\ast +\prescript{}{(0,0)}P^{\phi_{i,\epsilon}}H'$. As such, for $m\geq \hat{n}_{\epsilon,i}$, \begin{equation*}
    |H'-v'_{m+1}|= \prescript{}{(0,0)}P^{\phi_{i,\epsilon}}|H'-v'_{m}| \implies || H'-v'_{m+1} ||_M \leq \eta ||  H'-v'_{m}||_M.
\end{equation*}
 Also, $||v_{\hat{n}_{\epsilon,i}}||_M\leq || H^\ast-v_{\hat{n}_{\epsilon,i}} ||_M+||H^\ast||_M<\infty$ such that $||H'-v'_{\hat{n}_{\epsilon,i}}||_M\leq || H'||_M+||v_{\hat{n}_{\epsilon,i}}||_M<\infty $. Thus, $\lim_{m\rightarrow \infty} || H'-v'_m||_M=0$. 
 We note that $v'_{m+2}-v'_{m+1}=\prescript{}{(0,0)}P^{\phi_{i,\epsilon}} (v'_{m+1}-v'_m)$ for $m\geq \hat{n}_{\epsilon,i}$. Now,
 \begin{equation}
     \begin{split} \label{eq:Hvn}
         ||H'-v'_{\hat{n}_{\epsilon,i}}||_M &\leq \lim_{m\rightarrow \infty}  \Big(||H'-v'_m||_M+\sum_{j=0}^{m-1}|| v'_{\hat{n}_{\epsilon,i}+j+1} -v'_{\hat{n}_{\epsilon,i}+j}||_M \Big)= \sum_{j=0}^{\infty}|| v'_{\hat{n}_{\epsilon,i}+j+1} -v'_{\hat{n}_{\epsilon,i}+j}||_M \\
         &\leq \sum_{j=0}^{\infty} \eta^i || v'_{\hat{n}_{\epsilon,i}+1}-v'_{\hat{n}_{\epsilon,i}}||_M =\frac{|| v'_{\hat{n}_{\epsilon,i}+1}-v'_{\hat{n}_{\epsilon,i}}||_M}{1-\eta}=\frac{|| v_{\hat{n}_{\epsilon,i}+1}-v_{\hat{n}_{\epsilon,i}}||_M}{1-\eta}.
     \end{split}
 \end{equation}

 Going back to Equation~(\ref{eq:vnvect}) we note that for $1\leq m\leq \hat{n}_{\epsilon,i}$ we have 
 \begin{equation*}
     \begin{split}
         c^{\phi^{m+1}}-g^\ast + \prescript{}{(0,0)}P^{\phi^{m+1}} v_m -(c^{\phi^{m+1}}-g^\ast + \prescript{}{(0,0)}P^{\phi^{m+1}}v_{m-1}) = \prescript{}{(0,0)}P^{\phi^{m+1}}(v_m-v_{m-1}) \leq v_{m+1}-v_m \\
         \leq c^{\phi^{m}}-g^\ast + \prescript{}{(0,0)}P^{\phi^{m}} v_m -(c^{\phi^{m}}-g^\ast + \prescript{}{(0,0)}P^{\phi^{m}}v_{m-1}) = \prescript{}{(0,0)}P^{\phi^{m}}(v_m-v_{m-1}).
     \end{split}
 \end{equation*}  

 These bounds give that $ || v_{m+1}-v_m ||_M\leq \eta || v_{m}-v_{m-1} ||_M$ and therefore, by Equation~(\ref{eq:Hvn}) and the fact that $M\geq \frac{1}{2}$, 
 give that \begin{equation} \label{eq:Hdiff2}
     ||H'-v'_{\hat{n}_{\epsilon,i}}||_M\leq \frac{\eta^{\hat{n}_{\epsilon,i}}|| v_1-v_0 ||}{1-\eta} = \frac{\eta^{\hat{n}_{\epsilon,i}}|| c^{\phi^1}-g^\ast||_M}{1-\eta}\leq \frac{\eta^{\hat{n}_{\epsilon,i}} (u_M+2g^\ast)}{1-\eta}\leq \frac{\epsilon}{2} \cdot \zeta^{-i}.
 \end{equation}

 Equations~(\ref{eq:Hdiff}), (\ref{eq:Hdiff1}), (\ref{eq:Hdiff2}) and the fact that $M(i,1)=\zeta^i$ result in \begin{equation*}
      |H^\ast(i,1)-H'(i,1)| \leq (\frac{\epsilon}{2}\cdot\zeta^{-i}+\frac{\epsilon}{2}\cdot\zeta^{-i})\cdot\zeta^i =\epsilon.
 \end{equation*}

 We can conclude that 

\begin{equation*}
    \begin{split}
        \mathbb{E}^{\phi^{\hat{n}_{\epsilon,i}}}_{\delta_i^0\times\delta_i^1} \Bigg[ \sum_{n=0}^\infty \Big( c(X_n)- c(Y_n) \Big) \Bigg]&=H^\ast(i,0)-H'(i,1)\\
        &\geq H^\ast(i,0)-H^\ast(i,1)-  |H^\ast(i,1)-H'(i,1)|
\geq s(i)-\epsilon.
    \end{split}
\end{equation*}

\end{proof}

\section{The relative values $H_\mu$ for linear and quadratic holding costs}\label{sect:HfVer}

\subsection{Linear holding costs
}\label{sect:HfVerLin}

For linear holding cost function $h(i)=K_{lin}\cdot i$, we propose and verify the solution 
\begin{equation}\label{hmulin}
    H_{\mu}(i)=\frac{K_{lin}i(i+1)}{2(\mu-\lambda)}
\end{equation} of Equation~(\ref{Hiter}).
Recall $g=c_\mu+K_{lin} \rho_{\mu}/(1-\rho_{\mu})$.
 We see $H_{\mu}(0)=0$ and $$H_{\mu}(1)= \frac{K_{lin}}{\mu-\lambda}=\frac{K_{lin}}{\lambda(1/\rho_{\mu}-1)} = \frac{K_{lin}\rho_{\mu}}{\lambda(1-\rho_{\mu})} = \frac{g-c_\mu}{\lambda}.$$
From the formula we get $H_{\mu}(i)-H_{\mu}(i-1)=K_{lin}\cdot i/(\mu-\lambda) $. Now through induction we find 
\begin{eqnarray*}
    H_{\mu}(i+1)&=& H_{\mu}(i)+\frac{g-c_\mu-h(i)+\mu(H_{\mu}(i)-H_{\mu}(i-1))}{\lambda}\\
    &=&\frac{K_{lin}\cdot i(i+1)}{2(\mu-\lambda)}+\frac{K_{lin}}{\mu-\lambda}-\frac{K_{lin}\cdot i}{\lambda}+\frac{\mu K_{lin}\cdot i/\lambda}{\mu-\lambda}\\&=&
\frac{K_{lin}(i(i+1)+2-2i(\mu-\lambda)/\lambda+2\mu\cdot i/\lambda )}{2(\mu-\lambda)}\\
&=&\frac{K_{lin}(i(i+1)+2+2i}{2(\mu-\lambda)}=\frac{K_{lin}(i(i+1)+2(i+1)}{2(\mu-\lambda)}=\frac{K_{lin}(i+2)(i+1)}{2(\mu-\lambda)}.
\end{eqnarray*}
 By induction, the formula for $H_{\mu}(i)$ is correct.

\subsection{Quadratic holding costs
}\label{sect:HfVerSqd}

For quadratic holding cost function $h(i)=K_{sqd}\cdot i^2$, we propose and verify the solution 
\begin{equation}\label{hmusqd}
    H_{\mu}(i)=\frac{K_{sqd}\cdot i(i+1)( \mu+5\lambda+2i(\mu-\lambda) )}{6(\mu-\lambda)^2}
\end{equation}
 of Equation~(\ref{Hiter}). Recall $g=c_\mu+ K_{sqd}\rho_{\mu}(1+\rho_{\mu})/(1-\rho_{\mu})^2$.
We see $H_{\mu}(0)=0$ and
\begin{eqnarray*}
    H_{\mu}(1)&= \frac{K_{sqd}\cdot 2( \mu+5\lambda+2(\mu-\lambda) )}{6(\mu-\lambda)^2}=\frac{K_{sqd}\cdot3(\mu+\lambda)}{3(\mu-\lambda)^2}\\
    &= \frac{K_{sqd}(1+\rho_{\mu})/\mu}{(1-\rho_{\mu})^2}=\frac{K_{sqd}\rho_{\mu}(1+\rho_{\mu})}{\lambda(1-\rho_{\mu})^2} = \frac{g-c_\mu}{\lambda}.
\end{eqnarray*}
\allowdisplaybreaks
Then we use induction and derive/verify
\fontsize{10}{12}
\begin{eqnarray*}
H_{\mu}(i+1)&=&H_{\mu}(i)+\frac{g-c_\mu-h(i)+\mu(H_{\mu}(i)-H_{\mu}(i-1))}{\lambda}\\
&=&\frac{K_{sqd}\cdot i(i+1)( \mu+5\lambda+2i(\mu-\lambda) )}{6(\mu-\lambda)^2}+\frac{6K_{sqd}(\mu+\lambda)}{6(\mu-\lambda)^2}-\frac{K_{sqd}\cdot i^2}{\lambda}\\
&&\quad+\frac{K_{sqd}\mu(i(i+1)( \mu+5\lambda+2i(\mu-\lambda) )- i(i-1)( \mu+5\lambda+2(i-1)(\mu-\lambda) ) )/\lambda}{6(\mu-\lambda)^2} \\
&=& \frac{K_{sqd}}{6(\mu-\lambda)^2} \Bigg( i(i+1)( \mu+5\lambda+2i(\mu-\lambda) )+6(\mu+\lambda)-6i^2(\mu-\lambda)^2/\lambda\\
&&\hspace{2.5cm}+\Big(2i( \mu+5\lambda)+2(\mu-\lambda)((i+1)i^2-(i-1)^2i) \Big)(1+(\mu-\lambda)/\lambda)\Bigg)\\
&=&  \frac{K_{sqd}}{6(\mu-\lambda)^2} \Bigg((i+1)(i+2)(\mu+5\lambda)-2(\mu+5\lambda)+2(i+1)i^2(\mu-\lambda)  \\
&& \hspace{2.5cm} +6(\mu+\lambda)-6i^2(\mu-\lambda)^2/\lambda+2i(\mu-\lambda)(\mu+5\lambda)/\lambda\\
&&\hspace{6cm} +2(\mu-\lambda)(3i^2-i)(1+(\mu-\lambda)/\lambda) \Bigg)\\
&=&  \frac{K_{sqd}}{6(\mu-\lambda)^2} \Bigg((i+1)(i+2)(\mu+5\lambda)+4\mu-4\lambda+2(i+1)i^2(\mu-\lambda)  \\
&& \hspace{2.5cm} 
+6i^2(\mu-\lambda)-2i(\mu-\lambda)-2i(\mu-\lambda)^2/\lambda+2i(\mu-\lambda)(\mu+5\lambda)/\lambda
\Bigg)\\
&=&  \frac{K_{sqd}}{6(\mu-\lambda)^2} \Bigg((i+1)(i+2)(\mu+5\lambda)+2(i+1)i^2(\mu-\lambda)+4(\mu-\lambda)  \\
&& \hspace{2.5cm} 
+6i^2(\mu-\lambda)-2i(\mu-\lambda)+12i(\mu-\lambda)
\Bigg)\\
&=&  \frac{K_{sqd}}{6(\mu-\lambda)^2} \Bigg((i+1)(i+2)(\mu+5\lambda)+2(\mu-\lambda)\Big((i+1)i^2+2+3i^2+5i \Big) 
\Bigg)\\
&=&  \frac{K_{sqd}}{6(\mu-\lambda)^2} \Bigg((i+1)(i+2)(\mu+5\lambda)+2(\mu-\lambda)(i+2)(i+1)^2 
\Bigg)\\
&=&  \frac{K_{sqd}}{6(\mu-\lambda)^2} \Bigg((i+1)(i+2)(\mu+5\lambda+2(i+1)(\mu-\lambda))
\Bigg).
\end{eqnarray*}
\fontsize{11}{13.6}
 By induction, the formula for $H_{\mu}(i)$ for the quadratic holding costs is also correct.

 \section{The expected saved costs plotted for various parameters}\label{sect:ExtraPlots}

 \subsection{The discounted model}\label{sect:ExtraPlots1}

 For fixed rate $\mu=\mu_1$ and $\alpha=0.005$, Figures~\ref{fig:plotA1} and \ref{fig:plotA2} contain plots of the expected saved costs for the remaining combinations of parameters from Table~\ref{table1} and Table~\ref{table2}.

\begin{figure}[H]
\centering
\begin{subfigure}{.5\textwidth}
  \centering
  \includegraphics[width=1\linewidth]{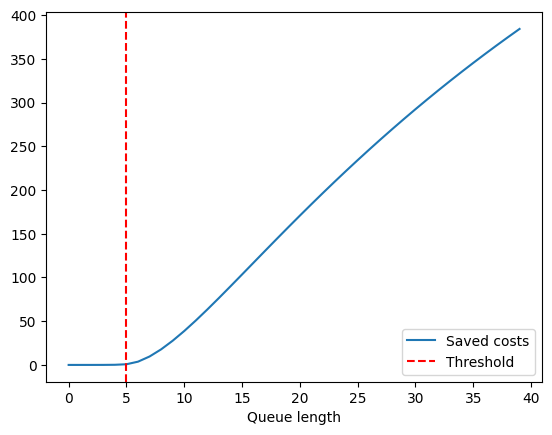}
\end{subfigure}%
\begin{subfigure}{.5\textwidth}
  \centering
  \includegraphics[width=1\linewidth]{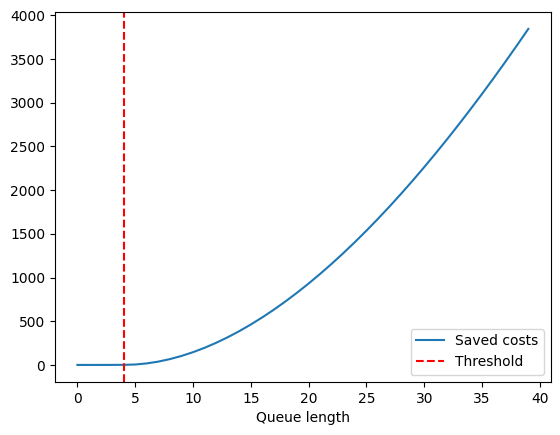}
\end{subfigure}
\caption{ Plots of (approximated) expected saved costs ($\tilde{s}_{\alpha,\epsilon}(i)$) for $\alpha=0.005,\beta=0.1, \lambda =0.1,\mu_1=0.35,\mu_2=0.45,c_{\mu_2}=10$, and $\epsilon=0.001$. The two cases correspond to linear holding cost function $h(i)=5\cdot i$, and quadratic holding cost function $h(i)=i^2$, respectively.}
\label{fig:plotA1}
\end{figure}

\begin{figure}[H]
\centering
\begin{subfigure}{.5\textwidth}
  \centering
  \includegraphics[width=1\linewidth]{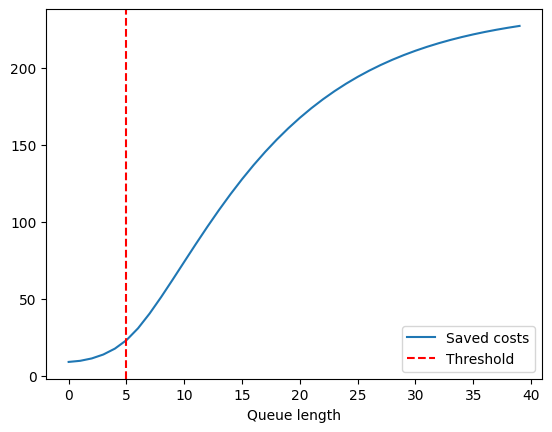}
\end{subfigure}%
\begin{subfigure}{.5\textwidth}
  \centering
  \includegraphics[width=1\linewidth]{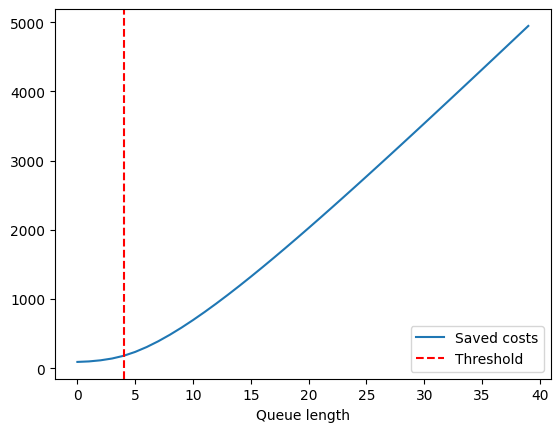}
\end{subfigure}
\caption{ Plots of (approximated) expected saved costs ($\tilde{s}_{\alpha,\epsilon}(i)$) for $\alpha=0.005,\beta=0.02, \lambda =0.31,\mu_1=0.33,\mu_2=0.34$, and $\epsilon=0.001$. The two cases correspond to linear holding cost function $h(i)=5\cdot i$ with $c_{\mu_2}=4$, and quadratic holding cost function $h(i)=i^2$ with $c_{\mu_2}=10$, respectively.}
\label{fig:plotA2}
\end{figure}

 Likewise, Figures~\ref{fig:plotA3} and \ref{fig:plotA4} contain plots for the case $\mu=\mu_2$ with the remaining combinations of parameters from Table~\ref{table3} and Table~\ref{table4}.

\begin{figure}[H]
\centering
\begin{subfigure}{.5\textwidth}
  \centering
  \includegraphics[width=1\linewidth]{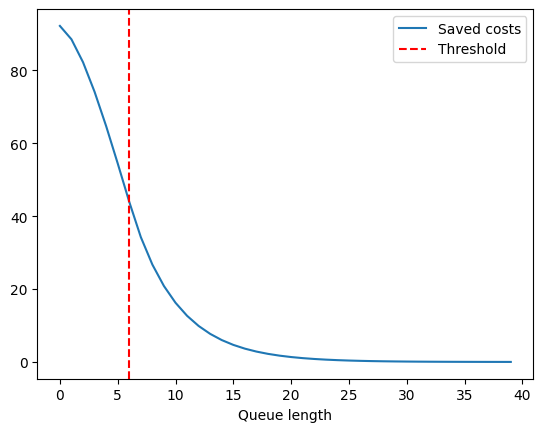}
\end{subfigure}%
\begin{subfigure}{.5\textwidth}
  \centering
  \includegraphics[width=1\linewidth]{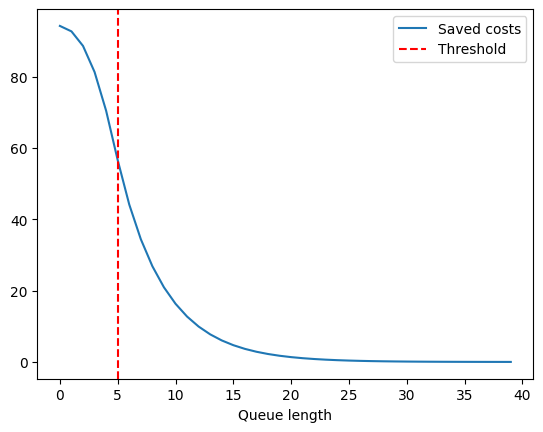}
\end{subfigure}
\caption{  Plots of (approximated) expected saved costs ($\tilde{s}_{\alpha,\epsilon}(i)$) for $\alpha=0.005,\beta=0.1, \lambda =0.1,\mu_1=0.35,\mu_2=0.45,c_{\mu_2}=10$, and $\epsilon=0.001$. The two cases correspond to linear holding cost function $h(i)=5\cdot i$, and quadratic holding cost function $h(i)=i^2$, respectively.}
\label{fig:plotA3}
\end{figure}

\begin{figure}[H]
\centering
\begin{subfigure}{.5\textwidth}
  \centering
  \includegraphics[width=1\linewidth]{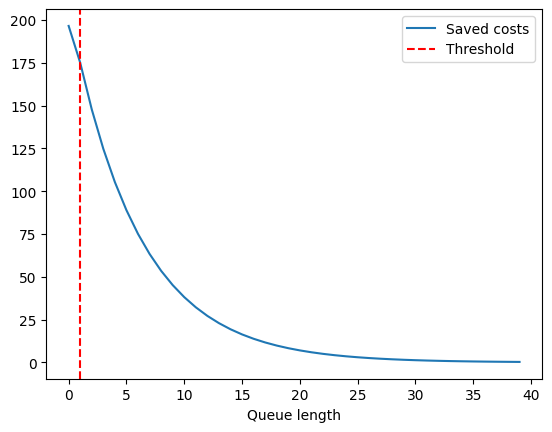}
\end{subfigure}%
\begin{subfigure}{.5\textwidth}
  \centering
  \includegraphics[width=1\linewidth]{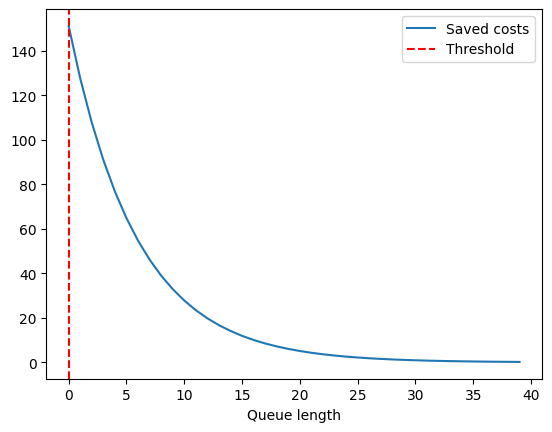}
\end{subfigure}
\caption{  Plots of (approximated) expected saved costs ($\tilde{s}_{\alpha,\epsilon}(i)$) for $\alpha=0.005,\beta=0.01, \lambda =0.33,\mu_1=0.3,\mu_2=0.36,c_{\mu_2}=10$, and $\epsilon=0.001$. The two cases correspond to linear holding cost function $h(i)=5\cdot i$, and quadratic holding cost function $h(i)=i^2$, respectively.}
\label{fig:plotA4}
\end{figure}

 \subsection{The non-discounted model}\label{sect:ExtraPlots2}

Figures~\ref{fig:plotA5} and \ref{fig:plotA6} portray the total expected saved costs for the case $\mu=\mu_1$ with the remaining combinations of parameters from Table~\ref{table5} and Table~\ref{table6}.

 \begin{figure}[H]
\centering
\begin{subfigure}{.5\textwidth}
  \centering
  \includegraphics[width=1\linewidth]{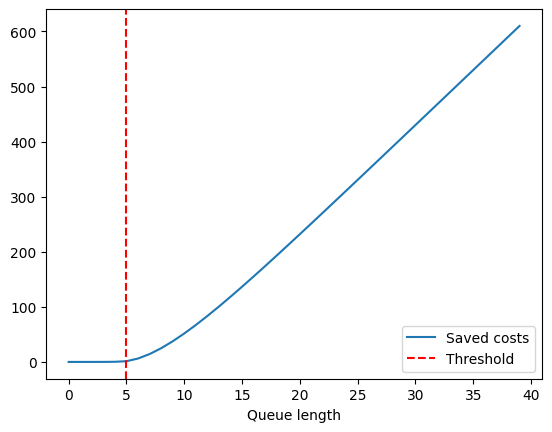}
\end{subfigure}%
\begin{subfigure}{.5\textwidth}
  \centering
  \includegraphics[width=1\linewidth]{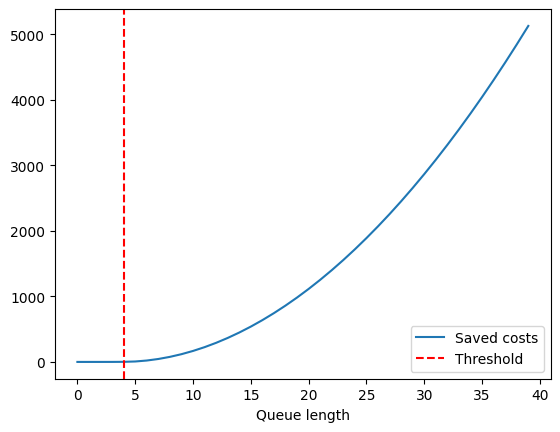}
\end{subfigure}
\caption{ Plots of (approximated) expected saved costs ($\tilde{s}_\epsilon(i)$) for $\beta=0.1, \lambda =0.1,\mu_1=0.35,\mu_2=0.45,c_{\mu_2}=10,\mu=\mu_1$ and $\epsilon=0.001$. The two cases correspond to linear holding cost function $h(i)=5\cdot i$, and quadratic holding cost function $h(i)=i^2$, respectively.}
\label{fig:plotA5}
\end{figure}

\begin{figure}[H]
\centering
\begin{subfigure}{.5\textwidth}
  \centering
  \includegraphics[width=1\linewidth]{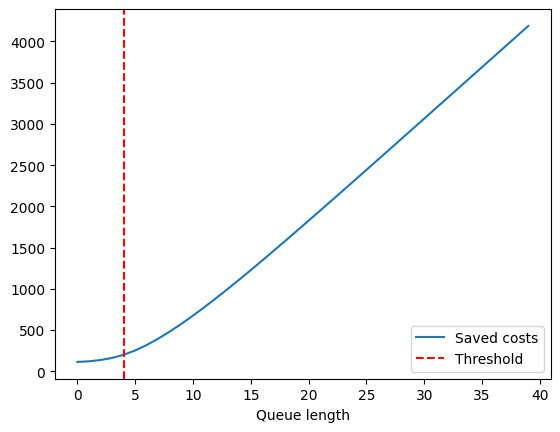}
\end{subfigure}%
\begin{subfigure}{.5\textwidth}
  \centering
  \includegraphics[width=1\linewidth]{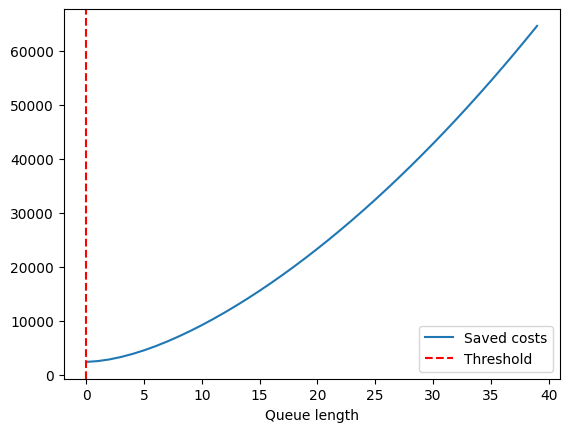}
\end{subfigure}
\caption{ Plots of (approximated) expected saved costs ($\tilde{s}_\epsilon(i)$) for $\beta=0.02, \lambda =0.31,\mu_1=0.33,\mu_2=0.34,c_{\mu_2}=10,\mu=\mu_1$ and $\epsilon=0.001$. The two cases correspond to linear holding cost function $h(i)=5\cdot i$, and quadratic holding cost function $h(i)=i^2$, respectively.}
\label{fig:plotA6}
\end{figure}

\begin{figure}[H]
\centering
\begin{subfigure}{.5\textwidth}
  \centering
  \includegraphics[width=1\linewidth]{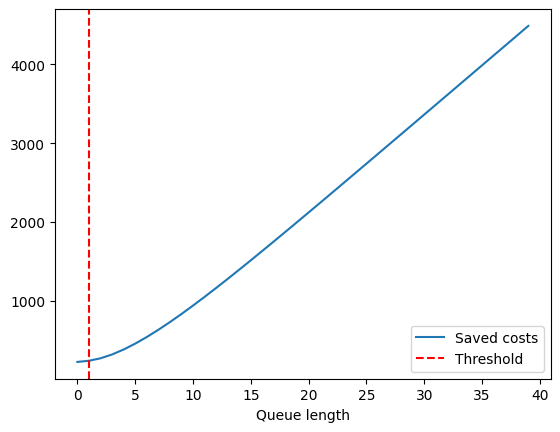}
\end{subfigure}%
\begin{subfigure}{.5\textwidth}
  \centering
  \includegraphics[width=1\linewidth]{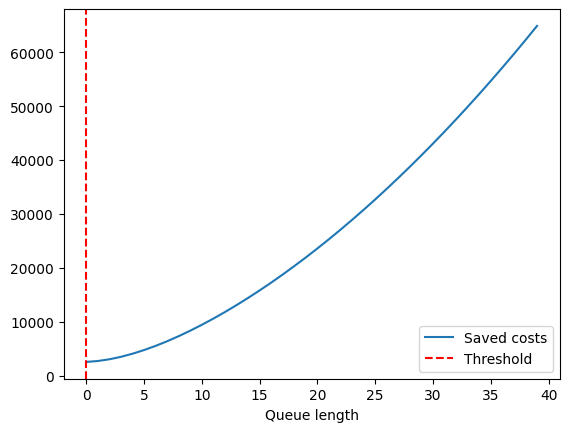}
\end{subfigure}
\caption{ Plots of (approximated) expected saved costs ($\tilde{s}_\epsilon(i)$) for $\beta=0.02, \lambda =0.31,\mu_1=0.33,\mu_2=0.34,c_{\mu_2}=4,\mu=\mu_1$ and $\epsilon=0.001$. The two cases correspond to linear holding cost function $h(i)=5\cdot i$, and quadratic holding cost function $h(i)=i^2$, respectively.}
\label{fig:plotA99}
\end{figure}

 Finally, for fixed rate $\mu=\mu_2$ 
 Figures~\ref{fig:plotA7} and \ref{fig:plotA8}
 contain plots of the total 
expected saved costs for the remaining combinations of parameters from Table~\ref{table7} and Table~\ref{table8}.

 \begin{figure}[H]
\centering
\begin{subfigure}{.5\textwidth}
  \centering
  \includegraphics[width=1\linewidth]{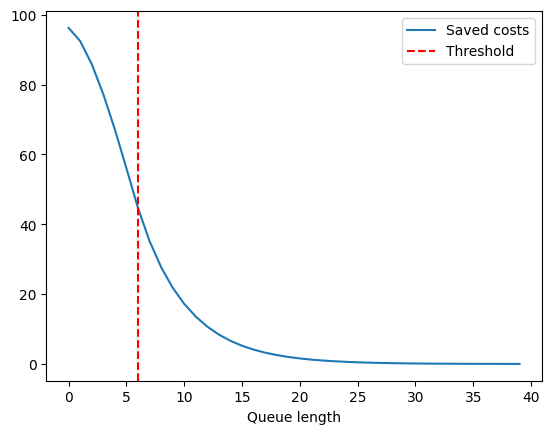}
\end{subfigure}%
\begin{subfigure}{.5\textwidth}
  \centering
  \includegraphics[width=1\linewidth]{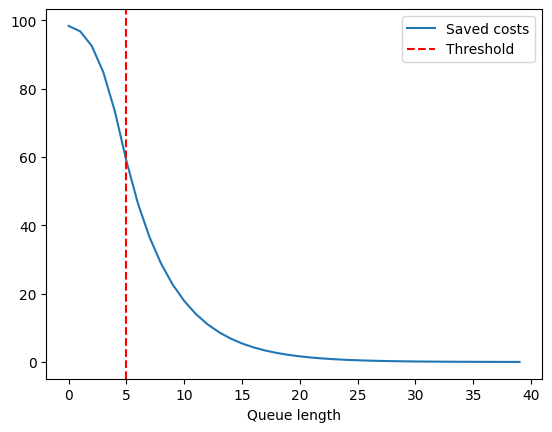}
\end{subfigure}
\caption{ Plots of (approximated) expected saved costs ($\tilde{s}_\epsilon(i)$) for $\beta=0.1, \lambda =0.1,\mu_1=0.35,\mu_2=0.45,c_{\mu_2}=10,\mu=\mu_2$ and $\epsilon=0.001$. The two cases correspond to linear holding cost function $h(i)=5\cdot i$, and quadratic holding cost function $h(i)=i^2$, respectively.}
\label{fig:plotA7}
\end{figure}

\begin{figure}[H]
\centering
\begin{subfigure}{.5\textwidth}
  \centering
  \includegraphics[width=1\linewidth]{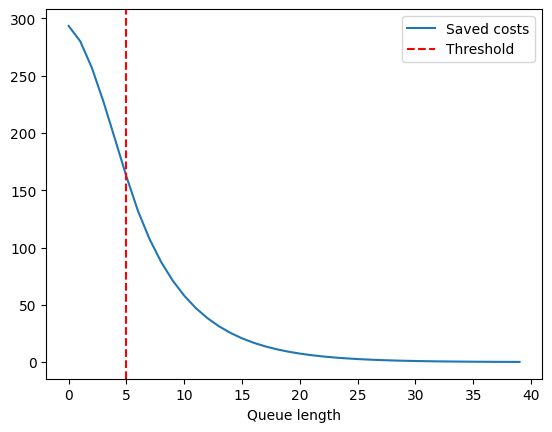}
\end{subfigure}%
\begin{subfigure}{.5\textwidth}
  \centering
  \includegraphics[width=1\linewidth]{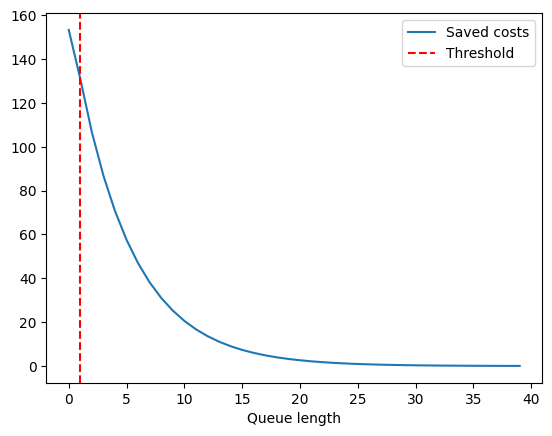}
\end{subfigure}
\caption{ Plots of (approximated) expected saved costs ($\tilde{s}_\epsilon(i)$) for $\beta=0.02, \lambda =0.31,\mu_1=0.33,\mu_2=0.34,c_{\mu_2}=10,\mu=\mu_2$ and $\epsilon=0.001$. The two cases correspond to linear holding cost function $h(i)=5\cdot i$, and quadratic holding cost function $h(i)=i^2$, respectively.}
\label{fig:plotA8}
\end{figure}

\begin{figure}[H]
\centering
\begin{subfigure}{.5\textwidth}
  \centering
  \includegraphics[width=1\linewidth]{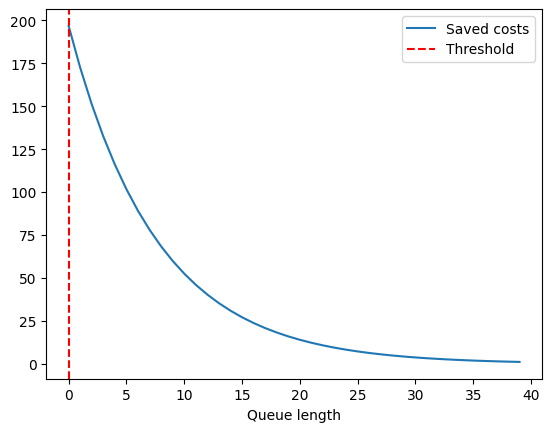}
\end{subfigure}%
\begin{subfigure}{.5\textwidth}
  \centering
  \includegraphics[width=1\linewidth]{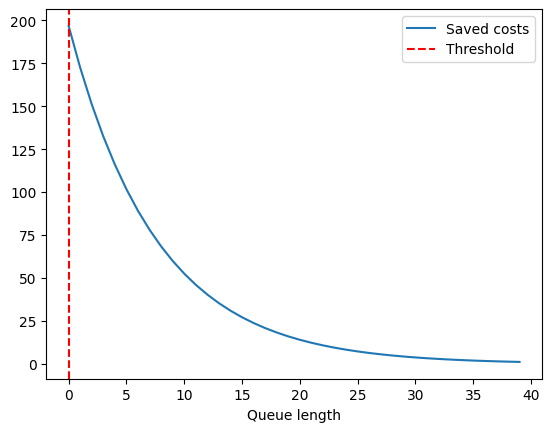}
\end{subfigure}
\caption{ Plots of (approximated) expected saved costs ($\tilde{s}_\epsilon(i)$) for $\beta=0.01, \lambda =0.33,\mu_1=0.3,\mu_2=0.36,c_{\mu_2}=10,\mu=\mu_2$ and $\epsilon=0.001$. The two cases correspond to linear holding cost function $h(i)=5\cdot i$, and quadratic holding cost function $h(i)=i^2$, respectively.}
\label{fig:plotA9}
\end{figure}

\end{document}